\documentclass[a4paper,10pt]{amsart}

\setlength{\textwidth}{450pt}
\setlength{\oddsidemargin}{0pt}
\setlength{\evensidemargin}{0pt}

\usepackage{verbatim, amssymb, enumerate}

\renewcommand \a{\alpha}
\renewcommand \b{\beta}
\newcommand \K{\delta}
\newcommand \n{\nabla}
\newcommand \la{\lambda}
\newcommand \ve{\varepsilon}
\newcommand \id{\mathrm{id}}
\newcommand \br{\mathbb{R}}
\newcommand \bc{\mathbb{C}}
\newcommand \Oc{\mathbb{O}}

\newcommand \rk{\operatorname{rk}}
\newcommand \Ker{\operatorname{Ker}}

\newcommand \End{\operatorname{End}}

\newcommand \Span{\operatorname{Span}}
\newcommand \Tr{\operatorname{Tr}}
\newcommand \db{\partial}

\newcommand \cp{\mathcal{C}}

\newcommand \so{\mathfrak{so}}
\newcommand\ag{\mathfrak a}

\newcommand\g{\mathfrak g}
\newcommand\h{\mathfrak h}

\newcommand\m{\mathfrak m}
\renewcommand\i{\mathrm i}

\newcommand \Sym{\operatorname{Sym}}
\newcommand \Hom{\operatorname{Hom}}

\newcommand \ad{\operatorname{ad}}

\newcommand \<{\langle}
\renewcommand \>{\rangle}

\newcommand \mU{\mathcal{U}}

\makeatletter
\newtheorem*{rep@theorem}{\rep@title}
\newcommand{\newreptheorem}[2]{%
\newenvironment{rep#1}[1]{%
\def\rep@title{#2 \ref{##1}}%
\begin{rep@theorem}}%
{\end{rep@theorem}}}
\makeatother

\theoremstyle{plane}

\newtheorem*{theorem*}{Theorem}
\newtheorem*{corollary*}{Corollary}
\newtheorem{lemma}{Lemma}
\newtheorem{proposition}{Proposition}
\newtheorem*{conj*}{Conjecture}
\newtheorem*{prop*}{Proposition}
\newreptheorem{lemma}{Lemma}

\theoremstyle{definition}

\newtheorem*{definition*}{Definition}

\theoremstyle{remark}
\newtheorem{remark}{Remark}

\begin{document}

\title{Weyl-Schouten Theorem for symmetric spaces}

\author{Y.Nikolayevsky}
\address{Department of Mathematics and Statistics, La Trobe University, Victoria, 3086, Australia}
\email{y.nikolayevsky@latrobe.edu.au}

\date{\today}


\subjclass[2010]{Primary: 53A30, 53C35; Secondary: 53B20}
\keywords{Weyl tensor, symmetric space}

\begin{abstract}
Let $M_0$ be a symmetric space of dimension $n  > 5$ whose de Rham decomposition contains no factors of constant curvature and let $W_0$ be the Weyl tensor of $M_0$ at some point. We prove that a Riemannian manifold $M$ whose Weyl tensor at every point is a positive multiple of $W_0$ is conformally equivalent to $M_0$ (the case $M_0 = \br^n$ is the Weyl-Schouten Theorem).
\end{abstract}

\maketitle

\section{Introduction}
\label{s:intro}

In this paper we generalise the classical Weyl-Schouten Theorem to the case when the model space is a Riemannian symmetric space and also consider the notion of curvature homogeneity in terms of the Weyl conformal curvature tensor.

A smooth Riemannian manifold $M^n$ is called \emph{curvature homogeneous}, if for any two points $x, y \in M^n$, there exists a linear isometry $\iota:T_xM^n \to T_yM^n$ which maps the curvature tensor of $M^n$ at $x$ to the curvature tensor of $M^n$ at $y$. A smooth Riemannian manifold $M^n$ is \emph{modelled on a homogeneous space $M_0$}, if for every point $x \in M^n$, there exists a linear isometry $\iota:T_xM^n \to T_oM_0$ which maps the curvature tensor of $M^n$ at $x$ to the curvature tensor of $M_0$ at $o \in M_0$ (the manifold $M^n$ is then automatically curvature homogeneous). The term ``curvature homogeneous" was introduced by F.Tricerri and L.Vanhecke in 1986 \cite{TV}; for the current state of knowledge the reader is referred to \cite{Gil}.

\begin{definition*} \label{d:ws}
A smooth Riemannian manifold $M^n$ is called \emph{Weyl homogeneous}, if for any $x, y \in M^n$, there exists a linear isometry
$\iota:T_xM^n \to T_yM^n$ which maps the Weyl tensor of $M^n$ at $x$ to a positive multiple of the Weyl tensor of $M^n$ at $y$.
A smooth Weyl homogeneous Riemannian manifold $M^n$ is \emph{modelled on a homogeneous space $M_0$}, if for every point
$x \in M^n$, there exists a linear isometry $\iota:T_xM^n \to T_oM_0$ which maps the Weyl tensor of $M^n$ at $x$ to a
positive multiple of the Weyl tensor of $M_0$ at $o \in M_0$.
\end{definition*}

In the latter case, we say that $M^n$ has \emph{the same} Weyl tensor as $M_0$. A Riemannian manifold which is conformally equivalent to a (model) homogeneous space is trivially Weyl homogeneous. One may ask if the converse is true, namely,
\begin{quote}
\emph{is a Riemannian manifold having the same Weyl conformal curvature tensor as a homogeneous space $M_0$ locally conformally equivalent to $M_0$?}
\end{quote}
By the classical Weyl-Schouten Theorem, the answer is in positive, when $M_0=\br^n, \, n \ge 4$ (of course, the restriction $\dim M_0 >3$ is implicitly assumed in the question). In general, however, the answer is in negative even for Weyl homogeneous manifolds modelled on symmetric spaces (see \cite[Section~4]{Ndga}, where an example from \cite[Theorem~4.2]{BKV} is discussed from the conformal point of view). Moreover, based on the existence of many examples of curvature homogeneous manifolds which are not locally homogeneous \cite{Gil} one would most probably expect at least as many examples in the conformal settings.

Nevertheless, the situation is not that hopeless, as a curvature homogeneous manifold modelled on a symmetric space \emph{is}, in the most cases, locally isometric to its model space. More precisely, by \cite[Corollary~10.3]{KTV}, this is true, \emph{provided the de Rham decomposition of the model space contains no product of the form $M^2(\kappa) \times \br^m$}, where $M^2(\kappa)$ is a Riemannian manifold of dimension two of constant curvature $\kappa \ne 0$ and $m \ge 1$.

Our main result states that the picture in the conformal settings is somewhat similar:
\begin{theorem*}
Let $M_0$ be a Riemannian symmetric space of dimension $n > 5$ whose de Rham decomposition contains no factors of constant curvature. Then any smooth Weyl homogeneous Riemannian manifold modelled on $M_0$ is locally conformally equivalent to $M_0$.
\end{theorem*}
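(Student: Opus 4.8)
The plan is to upgrade the pointwise Weyl-homogeneity hypothesis into a genuine conformal change of metric by producing the conformal factor as the solution of an overdetermined PDE system, exactly as in the proof of the classical Weyl--Schouten Theorem. Recall that $M$ is conformally equivalent to $M_0$ near a point precisely when there is a function $\varphi$ such that the metric $\bar g = e^{2\varphi} g$ has curvature tensor isometric to that of $M_0$; since $\bar g$ and $g$ share the same Weyl tensor, and the Weyl tensor of $M$ is assumed to be a positive multiple of $W_0$ at every point, what must be arranged is that the Schouten tensor $P_{\bar g}$ of $\bar g$ equals, under the relevant isometry, the Schouten tensor $P_0$ of $M_0$ (up to the scaling forced on the Weyl part). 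Concretely, I would first use the algebraic input: since $n>5$ and the de Rham decomposition of $M_0$ has no flat or constant-curvature factors, the Weyl tensor $W_0$ determines, \emph{as a point in the space of algebraic curvature tensors}, both the full curvature tensor $R_0$ and the Schouten tensor $P_0$ up to an overall positive scalar. This is the conformal analogue of the rigidity behind \cite[Corollary~10.3]{KTV}; the absence of $M^2(\kappa)\times\mathbb R^m$ and of flat factors is what kills the kernel of ``Weyl determines Ricci'' and lets one reconstruct $P_0$ from $W_0$.

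The second step is to transport this to $M$. At each $x\in M$ the Weyl tensor $W(x)$ is $\rho(x)\,\iota_x^*W_0$ for a positive function $\rho$ and a linear isometry $\iota_x:T_xM\to T_oM_0$; by the algebraic reconstruction, there is a canonically associated symmetric $2$-tensor field $\widehat P$ on $M$ — roughly $\widehat P(x)=\sqrt{\rho(x)}\,\iota_x^*P_0$ — which is the Schouten tensor that $M$ ``should'' have. The problem becomes: find $\varphi$ with $P_g - \nabla^2\varphi + d\varphi\otimes d\varphi - \tfrac12|d\varphi|^2 g = c\,\widehat P$ for a suitable constant (or function) $c$ tied to the scaling. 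Taking the conformally invariant part, $W_g$ already matches $c\,\widehat W$ automatically; so the content is the trace-adjusted equation for $\varphi$, an overdetermined first-order-in-$d\varphi$ / second-order-in-$\varphi$ system. I would check its integrability condition: differentiating and skew-symmetrising produces the Cotton tensor equation, and here one must verify that the Cotton tensor of $g$ agrees with what the model geometry prescribes — this is where the Bianchi-type identity for $W_0$ on the symmetric space $M_0$ (where $\nabla R_0=0$, hence the Cotton tensor of $M_0$ vanishes) is used, forcing a first-order differential constraint on $\rho$ and on the ``rotation field'' $x\mapsto\iota_x$.

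Once the PDE for $\varphi$ is solved on a neighbourhood (Frobenius/Cartan--Kähler, the integrability conditions having been checked), the metric $\bar g=e^{2\varphi}g$ is, by construction, curvature homogeneous and modelled on $M_0$ in the strong sense of $\nabla\bar R$ vanishing to the order needed; then \cite[Corollary~10.3]{KTV} — whose hypothesis is exactly ``no $M^2(\kappa)\times\mathbb R^m$ factor'', satisfied here a fortiori since there are no constant-curvature factors at all — identifies $(M,\bar g)$ locally with $M_0$, which is the conclusion. The main obstacle, and the place where the bulk of the real work will live, is the algebraic reconstruction step together with its differential consequence: one must show that the map ``Weyl tensor $\mapsto$ full Schouten/curvature data'' is not only well-defined pointwise (a representation-theoretic fact about algebraic curvature tensors of symmetric type, needing the $n>5$ and no-constant-curvature hypotheses to be injective modulo scaling) but also has enough regularity and rigidity that the Cotton/Codazzi integrability conditions are \emph{consequences} of $W_g=\rho\,\iota^*W_0$ rather than extra assumptions — in other words, that Weyl-homogeneity modelled on $M_0$ already encodes $\nabla\bar R=0$ after the conformal change. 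Handling the possible conformal rescaling freedom in the factors (the positive multiple $\rho$ could, a priori, vary independently) and ruling it out via the no-flat-factor condition is the delicate point; dimension $n=5$ and lower, and constant-curvature factors, are genuinely excluded because there the reconstruction fails.
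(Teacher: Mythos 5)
Your proposal outlines the classical Weyl--Schouten template (algebraically reconstruct the ``should-be'' Schouten tensor from $W$, solve an overdetermined PDE for the conformal factor, invoke Frobenius, then quote \cite[Corollary~10.3]{KTV}), but it leaves the actual theorem unproved. The sentence ``this is where the bulk of the real work will live'' is the review of your own argument: the integrability conditions for your PDE system do \emph{not} follow from Weyl-homogeneity by any general principle, and showing that they do in fact hold is precisely the content of the paper's Proposition~\ref{p:main}, whose proof occupies Sections~\ref{s:proof}--\ref{s:rk1} and requires a fine Lie-theoretic analysis of restricted root systems, case-by-case by rank, with separate treatments of rank one, rank two, and rank $\ge 3$ spaces and of the exceptional case $SU(3)/SO(3)$. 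Contrast this with the case $M_0=\mathbb R^n$, where the integrability is automatic (the Cotton tensor is a divergence of the Weyl tensor in $n\ge 4$); for a general symmetric model it is not, and indeed the proposition fails for $SU(3)/SO(3)$ (Section~\ref{s:su3so3}), so there is no cheap abstract route to it.

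Two further specific gaps. First, your ``algebraic reconstruction'' --- that $W_0$ determines $P_0$ up to scale among symmetric-space curvature tensors --- is not a pointwise representation-theoretic fact about algebraic curvature tensors; you have no control over the curvature tensor of $M$, which is not assumed symmetric, so the field $\widehat P$ you define is not canonically attached to the geometry of $M$. The paper does perform a reconstruction of this flavour (the argument with $\tau=\rho-\rho_0$, $[W,S_\tau]=0$, $[W,\ad_{S_\tau(T)}]=0$ in the proof of the Theorem), but only \emph{after} establishing $\nabla W=0$ and invoking Roter's theorem to get local symmetry of $M$ --- those extra hypotheses are used essentially. Second, the smoothness of the ``rotation field'' $x\mapsto\iota_x$, and the freedom in its choice modulo the stabiliser of $W_0$, is itself a nontrivial point handled by Lemma~\ref{l:eismooth} via an initial conformal change of the metric on $M$; your proposal glosses over it. In short, the shape of the argument is fine, but the parts you defer are exactly where the theorem lives, and the paper's strategy of passing through the second Bianchi identity (Lemma~\ref{l:gbianchi}) to the purely Lie-algebraic Proposition~\ref{p:main} is what makes that deferred work tractable.
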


In the earlier papers, the Theorem was established for $M_0=\bc P^m, \; m \ge 4$ (and for its noncompact dual) \cite{BG1}, for rank-one symmetric spaces of dimension $n > 4$ \cite[Theorem~2]{Nampa}, and for simple groups with a bi-invariant metric \cite{Ndga}. The dimension restriction in the Theorem excludes only the spaces $\bc P^2$ and $SU(3)/SO(3)$ and their duals. Note that the claim of the Theorem is false in the case $M_0=\bc P^2$, as a four-dimensional Riemannian manifold having the same Weyl tensor as $\bc P^2$ is either self-dual or anti-self-dual by \cite{BG2} and as there exist self-dual K\"{a}hler metrics on $\bc^2$ which are not locally conformally equivalent to any locally symmetric one \cite{Der}. In the case $M_0=SU(3)/SO(3)$, we show that at least the infinitesimal version of the Theorem (Proposition~\ref{p:main} in Section~\ref{s:geo}) is not satisfied (see Section~\ref{s:su3so3}).

\smallskip

The paper is organised as follows. In Section~\ref{s:geo} we give a brief introduction following the setup of \cite{Ndga} and then prove the Theorem with the help of Lie-algebraic Proposition~\ref{p:main}. The rest of the paper (except for Section~\ref{s:su3so3}) is devoted to the proof of Proposition~\ref{p:main}. In Section~\ref{s:proof}, we reduce the proof of Proposition~\ref{p:main} to the case when $M_0$ is irreducible (Proposition~\ref{p:irre}). Further on, in Section~\ref{s:irred}, we prove Proposition~\ref{p:irre} using three technical lemmas: Lemma~\ref{l:Amh}, Lemma~\ref{l:rkge3} (which covers the case $\rk M_0 \ge 3$) and Lemma~\ref{l:rk2} (the case $\rk M_0 = 2$). The former two are proved in Section~\ref{s:rr}, the latter one, in Section~\ref{s:rk2}. The proof is completed in Section~\ref{s:rk1}, where we consider the rank one spaces. In Section~\ref{s:su3so3}, the final one, we show that Proposition~\ref{p:main} is false for $M_0=SU(3)/SO(3)$.

\section{Proof of the Theorem}
\label{s:geo}

Let $M^n$ be a Riemannian manifold with the metric $\<\cdot, \cdot\>$ and the Levi-Civita connection $\n$. For vector fields $X,Y$ define the field of a linear operator $X \wedge Y$ by lowering the index of the corresponding bivector: $(X \wedge Y)Z=\<X,Z\>Y-\<Y,Z\>X$. The curvature tensor is defined by $R(X,Y)=\n_X\n_Y-\n_Y\n_X-\n_{[X,Y]}$, where $[X,Y]=\n_XY-\n_YX$, and the Weyl conformal curvature tensor $W$, by
\begin{equation}\label{eq:weyldef}
R(X,Y)=(\rho X) \wedge Y + X \wedge (\rho Y) + W(X, Y),
\end{equation}
where $\rho =\frac{1}{n-2}\operatorname{Ric}-\frac{\operatorname{scal}}{2(n-1)(n-2)}\id$ is the Schouten tensor, $\operatorname{Ric}$ is the Ricci operator and $\operatorname{scal}$ is the scalar curvature. Denote $W(X,Y,Z,V)=\<W(X,Y)Z,V\>$. We have the following easy lemma.

\begin{lemma}[{\cite[Lemma~1]{Ndga}}]\label{l:eismooth}
Suppose that $M^n$ is a Weyl homogeneous manifold with the metric $\<\cdot, \cdot\>'$ modelled on a homogeneous space $M_0$ with the Weyl tensor $W_0 \ne 0$. Choose a point $o \in M_0$ and an orthonormal basis $E_i$ for $T_oM_0$. Then there exists a smooth metric $\<\cdot, \cdot\>$ on $M^n$ conformally equivalent to $\<\cdot, \cdot\>'$ such that for every $x \in M^n$, there exists a smooth orthonormal frame $e_i$ \emph{(}relative to $\<\cdot, \cdot\>$\emph{)} on a neighbourhood $\mU=\mU(x) \subset M^n$ satisfying $W(e_i,e_j,e_k,e_l)(y)=W_0(E_i,E_j,E_k,E_l)$, for all $y \in \mU$.
\end{lemma}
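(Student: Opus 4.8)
The plan is to first upgrade the pointwise ``straightening'' data supplied by Weyl homogeneity to a \emph{global} smooth conformal rescaling, and then to obtain a smooth frame near a given point by a principal-bundle (slice) argument whose structure group is the stabilizer $H := \{A \in \O(n) : A\cdot W_0 = W_0\}$, which is compact because $\O(n)$ is. Throughout, I view the Weyl tensor in an orthonormal frame as an element of the finite-dimensional space $\mathcal{W}$ of algebraic curvature-type, trace-free $(0,4)$-tensors, on which $\O(n)$ acts, and I write $W_0 \in \mathcal{W}$ for the element with components $W_0(E_i,E_j,E_k,E_l)$.

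Step one: fixing the conformal factor. For any metric $g$ conformal to $\<\cdot,\cdot\>'$ and any $g$-orthonormal frame $e=(e_i)$, the numbers $W(e_i,e_j,e_k,e_l)$ (Weyl tensor of $g$) are the components of an element $\omega_{g,e} \in \mathcal{W}$. Conformal invariance of the $(1,3)$-Weyl operator gives, by a direct computation, that under a conformal change of $g$ together with the corresponding change of orthonormal frame this element is multiplied by the reciprocal of the conformal factor, and that for fixed $g$ two $g$-orthonormal frames at a point yield $\O(n)$-related elements of $\mathcal{W}$. Now Weyl homogeneity of $M^n$ modelled on $M_0$ says exactly that for every $y\in M^n$ there is a $\<\cdot,\cdot\>'$-orthonormal frame $E'(y)$ of $T_yM^n$ with $\omega_{\<\cdot,\cdot\>',E'(y)}=\mu(y)^{-1}W_0$ for some $\mu(y)>0$. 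Since the Euclidean norm on $\mathcal{W}$ is $\O(n)$-invariant, $\mu(y)$ is independent of the choice of $E'(y)$ and equals $|W_0|$ divided by the tensor norm of the Weyl tensor of $\<\cdot,\cdot\>'$ at $y$; as $W_0\ne0$, this norm is nowhere zero, so $\mu$ is a smooth positive function. Setting $\<\cdot,\cdot\>:=\mu^{-1}\<\cdot,\cdot\>'$, the $\<\cdot,\cdot\>$-orthonormal frame $\mu(y)^{1/2}E'(y)$ then has Weyl components exactly $W_0(E_i,E_j,E_k,E_l)$, so a straightening $\<\cdot,\cdot\>$-orthonormal frame exists at every point.

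Step two: making the frame smooth near a point $x\in M^n$. Fix one straightening $\<\cdot,\cdot\>$-orthonormal frame $u$ at $x$, extend it to an arbitrary smooth $\<\cdot,\cdot\>$-orthonormal frame field $\hat e$ on a neighbourhood with $\hat e(x)=u$, and let $\omega(y):=\omega_{\<\cdot,\cdot\>,\hat e(y)}\in\mathcal{W}$. Then $\omega$ is smooth, $\omega(x)=W_0$, and by the two-frame remark together with step one every $\omega(y)$ lies in the $\O(n)$-orbit $\mathcal{O}$ of $W_0$. Compactness of $\O(n)$ makes $\mathcal{O}$ an embedded submanifold of $\mathcal{W}$ and the orbit map $\O(n)\to\mathcal{O}$, $A\mapsto A\cdot W_0$, a principal $H$-bundle, hence it admits a smooth local section $\sigma$ near $W_0$ with $\sigma(W_0)=\id$. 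As $\mathcal{O}$ is embedded, $\omega$ is smooth as a map into $\mathcal{O}$, so after shrinking to a small enough $\mU=\mU(x)$ the composition $A:=\sigma\circ\omega:\mU\to\O(n)$ is smooth with $A(x)=\id$ and $A(y)\cdot W_0=\omega(y)$. Applying $A(y)^{-1}$ to $\hat e(y)$ (with the index convention under which $\mathcal{O}$ is the orbit) produces a smooth $\<\cdot,\cdot\>$-orthonormal frame $e_i$ on $\mU$ with $W(e_i,e_j,e_k,e_l)(y)=W_0(E_i,E_j,E_k,E_l)$ for all $y\in\mU$, which is the assertion.

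The conformal bookkeeping of step one is routine. The substantive point is step two, and within it the fact that the straightening orthonormal frames form a principal bundle with \emph{compact} structure group $H$: this is precisely what guarantees a smooth local section, and it is the only place where compactness of $\O(n)$ (hence embeddedness of $\mathcal{O}$ and local triviality of the orbit map) is needed, as well as the only place where the Weyl-homogeneity hypothesis is used at all points of $M^n$ rather than merely at a single point.
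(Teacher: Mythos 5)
The paper states this lemma with a citation to \cite[Lemma~1]{Ndga} and gives no proof of its own, so there is no in-paper argument to compare against. On its own terms, your proof is correct and complete. Step one is the right normalization: the $(0,4)$ Weyl components in an orthonormal frame transform by the reciprocal of the conformal factor, so dividing $\<\cdot,\cdot\>'$ by the smooth positive function $\mu(y)=|W_0|/\|W'(y)\|'$ (well-defined because the tensor norm on $\mathcal{W}$ is $\O(n)$-invariant, and smooth and nowhere zero because $W_0\ne 0$) yields a metric whose Weyl tensor lies pointwise in the $\O(n)$-orbit of $W_0$. Step two correctly invokes the compact-group facts (embeddedness of the orbit $\mathcal{O}\subset\mathcal{W}$ and local triviality of the orbit map $\O(n)\to\mathcal{O}\cong\O(n)/H$) to produce a smooth local section, and precomposing it with $\omega=\omega_{\<\cdot,\cdot\>,\hat e}$ straightens the smooth reference frame $\hat e$ into the desired frame on a neighbourhood of $x$. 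The minor conventional hedging (index convention under which $\mathcal O$ is the orbit; choosing the section so that $\sigma(W_0)=\id$, which is possible since $\sigma(W_0)\in H$) is harmless. This conformal-normalization-plus-slice-theorem argument is the standard one for such ``pointwise to locally smooth'' lemmas, and is presumably essentially the argument in \cite{Ndga}; you have supplied a valid proof.
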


\begin{remark}\label{rem:wne0}
In our case, the condition $W_0 \ne 0$ is satisfied, as otherwise $M_0$ were locally isometric to one of the spaces $\br^n, \, \br \times S^{n-1}(\kappa), \, \br \times H^{n-1}(-\kappa), \, S^{n-p}(\kappa) \times H^{p}(-\kappa), \; \kappa >0, \; 0 \le p \le n$ \cite{Kur}, which would contradict the assumption that $M_0$ has no factors of constant curvature.
\end{remark}

For the remainder of the proof, we assume that the metric on $M^n$ is chosen as in Lemma~\ref{l:eismooth} and we will be proving that
$M^n$, with that metric, is locally isometric to the model space $M_0$.

Let $x \in M^n$ and let $e_i$ be the orthonormal frame on the neighbourhood $\mU$ of $x$ introduced in Lemma~\ref{l:eismooth}.
For every $Z \in T_xM^n$, define the linear operator $K_Z$ (the connection operator) by
\begin{equation}\label{eq:gdefK}
K_Ze_i=\n_Ze_i,
\end{equation}
and extended to $T_xM^n$ by linearity. As the basis $e_i$ is orthonormal, $K_Z$ is skew-symmetric.
For smooth vector fields $X, Y$ on $\mU$, define the Cotton-York tensor (up to a constant multiple), by
\begin{equation}\label{eq:gdefPhi}
\Phi(X,Y)=(\n_X \rho)Y-(\n_Y \rho)X,
\end{equation}
where $\rho$ is the Schouten tensor (see \eqref{eq:weyldef}). Clearly $\Phi$ is skew-symmetric and
\begin{equation}\label{eq:cyclePhi}
\sigma_{XYZ}\<\Phi(X,Y),Z\>=0,
\end{equation}
where $\sigma_{XYZ}$ is the sum over the cyclic permutations of the triple $(X,Y,Z)$.

Let $M_0=G/H$ be the model symmetric space for $M^n$, where $G$ is the identity component of the full isometry group of $M_0$ and $H$ is the isotropy subgroup of $o \in M_0$, and let $\g=\h+\m$ be the corresponding Cartan decomposition, where $\g$ and $\h$ are the Lie algebras of $G$ and $H$ respectively, and $\m=T_oM_0$. Denote $R_0$ the curvature tensor of $M_0$ at $o$, so that for $X, Y, Z \in T_oM_0, \; R_0(X,Y)Z=-[[X,Y],Z]= -\ad_{[X,Y]}Z$. We denote $\ad(\h) \subset \so(\m)$ the isotropy subalgebra of $M_0$ at $o$.

In the assumptions of Lemma~\ref{l:eismooth}, identify $T_xM^n$ with $T_oM_0$ via the linear isometry $\iota$ mapping $e_i$ to $E_i$. Define $K$ and $\Phi$ on $\m=T_oM_0$ by the pull-back by $\iota$.

Let $\operatorname{Ric_0}$ and $\operatorname{scal}_0$ be the Ricci tensor and the scalar curvature of $M_0$ (at $o \in M_0$), and let $\rho_0=\frac{1}{n-2}\operatorname{Ric_0}-\frac{\operatorname{scal}_0}{2(n-1)(n-2)}\id$ (see \eqref{eq:weyldef}). Define the operator $\Psi: \Lambda^2\m \to \m$ by
\begin{equation}\label{eq:defPsi}
\Psi(X,Y)=\Phi(X,Y) + [\rho_0,K_X]Y - [\rho_0,K_Y]X,
\end{equation}
where (here and below) the bracket of linear operators is the usual commutator. From \eqref{eq:cyclePhi} and the fact that $[\rho_0,K_X]$ is symmetric it follows that
\begin{equation}\label{eq:cyclePsi}
\sigma_{XYZ}\<\Psi(X,Y),Z\>=0, \quad \text{for all} \quad  X,Y,Z \in \m.
\end{equation}

\begin{lemma}[{\cite[Lemma~2]{Ndga}}]\label{l:gbianchi}
In the assumptions of Lemma~\ref{l:eismooth}, let $M_0$ be a symmetric space. For $x \in M^n$, identify $T_xM^n$ with $\m=T_oM_0$ via the linear isometry $\iota$ mapping $e_i$ to $E_i$. Define $K$ and $\Phi$ on $T_xM^n$ by (\ref{eq:gdefK}, \ref{eq:gdefPhi}) and on $T_oM_0$, by the pull-back by $\iota$, and define $\Psi$ by \eqref{eq:defPsi}. Then
\begin{gather}\label{eq:gbiad}
\sigma_{XYZ}([\ad_{[X, Y]},K_Z]+\ad_{[K_XY-K_YX, Z]}+ \Psi(X,Y)\wedge Z)=0, \\
(\n_Z W)(X,Y)=[\ad_{[X, Y]},K_Z]+\ad_{[K_ZX, Y]-[K_ZY, X]}+([\rho_0,K_Z]X) \wedge Y + X \wedge ([\rho_0,K_Z]Y). \label{eq:gnablaW}
\end{gather}
\end{lemma}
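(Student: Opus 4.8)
The plan is to derive \eqref{eq:gnablaW} by computing $\n W$ directly, using that $W$ has constant components in the frame $e_i$ supplied by Lemma~\ref{l:eismooth} together with the Weyl decomposition of the model $M_0$, and then to obtain \eqref{eq:gbiad} by substituting \eqref{eq:gnablaW} into the second Bianchi identity for $M^n$.

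I would first establish \eqref{eq:gnablaW}. Since $\langle W(e_i,e_j)e_k,e_l\rangle \equiv W_0(E_i,E_j,E_k,E_l)$ on $\mU$, each endomorphism $W(e_i,e_j)$ has constant components in the frame $e_i$, so from $\n_Z e_i=K_Z e_i$ and the Leibniz rule for $\n$ one obtains, after identifying $T_xM^n$ with $\m$ via $\iota$ (under which $W$ becomes $W_0$),
\[
(\n_Z W)(X,Y)=[K_Z,W_0(X,Y)]-W_0(K_Z X,Y)-W_0(X,K_Z Y),
\]
the commutator being taken in $\End(\m)$. Into this I would substitute the Weyl decomposition of the model, $W_0(X,Y)=-\ad_{[X,Y]}-(\rho_0 X)\wedge Y-X\wedge(\rho_0 Y)$, which is \eqref{eq:weyldef} for $M_0$ together with $R_0(X,Y)=-\ad_{[X,Y]}$, and use the two elementary facts that $[K_Z,A\wedge B]=(K_Z A)\wedge B+A\wedge(K_Z B)$ (valid because $K_Z$ is skew-symmetric) and that $\ad$ is linear. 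Expanding and collecting, the wedge cross-terms $(\rho_0 X)\wedge(K_Z Y)$ and $(K_Z X)\wedge(\rho_0 Y)$ cancel, leaving the wedge part $([\rho_0,K_Z]X)\wedge Y+X\wedge([\rho_0,K_Z]Y)$, while the $\ad$-terms assemble into $[\ad_{[X,Y]},K_Z]+\ad_{[K_Z X,Y]-[K_Z Y,X]}$; this is \eqref{eq:gnablaW}.

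For \eqref{eq:gbiad} I would write $R=P+W$ with $P(X,Y)=(\rho X)\wedge Y+X\wedge(\rho Y)$ and insert this into the second Bianchi identity $\sigma_{XYZ}(\n_Z R)(X,Y)=0$. Since $(\n_Z P)(X,Y)=((\n_Z\rho)X)\wedge Y+X\wedge((\n_Z\rho)Y)$, a short cyclic rearrangement (use $X\wedge((\n_Z\rho)Y)=-((\n_Z\rho)Y)\wedge X$ and group the six resulting terms by their second wedge factor) gives $\sigma_{XYZ}(\n_Z P)(X,Y)=\sigma_{XYZ}(\Phi(X,Y)\wedge Z)$ by the definition \eqref{eq:gdefPhi} of $\Phi$. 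The same rearrangement applied to the wedge part $([\rho_0,K_Z]X)\wedge Y+X\wedge([\rho_0,K_Z]Y)$ of \eqref{eq:gnablaW} turns it into $\sigma_{XYZ}(([\rho_0,K_X]Y-[\rho_0,K_Y]X)\wedge Z)$, which by \eqref{eq:defPsi} equals $\sigma_{XYZ}((\Psi(X,Y)-\Phi(X,Y))\wedge Z)$. Adding the $P$- and $W$-contributions in $\sigma_{XYZ}(\n_Z R)(X,Y)=0$, the $\Phi(X,Y)\wedge Z$ terms cancel, and a final purely combinatorial reindexing, $\sigma_{XYZ}\ad_{[K_Z X,Y]-[K_Z Y,X]}=\sigma_{XYZ}\ad_{[K_X Y-K_Y X,Z]}$ (group the six resulting terms by the second argument of the Lie bracket), yields \eqref{eq:gbiad}.

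The only ingredient that is not pure bookkeeping is the second Bianchi identity together with the observation that $W$ is parallel in the frame $e_i$; the place where attention is needed is fixing the sign conventions in the formula for $\n_Z W$ and in $R_0(X,Y)=-\ad_{[X,Y]}$, so that the wedge cross-terms cancel exactly and the $\ad$-terms combine as claimed. Note that, unlike in later arguments, $\ad(\h)$-invariance of $W_0$ is not needed here.
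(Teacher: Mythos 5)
Your proof is correct and follows exactly the approach the paper alludes to (the paper itself cites the result from \cite{Ndga} without reproducing the proof, but remarks that \eqref{eq:gbiad} ``is just the second Bianchi identity'' and that $W$ has constant components in the adapted frame, which is precisely the structure of your argument). Both the derivation of $(\n_Z W)(X,Y)=[K_Z,W_0(X,Y)]-W_0(K_Z X,Y)-W_0(X,K_Z Y)$ from the constancy of the frame components, the cancellation of the wedge cross-terms leaving $[\rho_0,K_Z]$, the reindexing $\sigma_{XYZ}(\n_Z P)(X,Y)=\sigma_{XYZ}(\Phi(X,Y)\wedge Z)$ and its analogue for $[\rho_0,K_Z]$, and the final identity $\sigma_{XYZ}\ad_{[K_Z X,Y]-[K_Z Y,X]}=\sigma_{XYZ}\ad_{[K_X Y-K_Y X,Z]}$ all check out.
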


Equation \eqref{eq:gbiad} is just the second Bianchi identity. Note that the expression for $(\n_Z W)(X,Y)$ given in \cite[Eq.~(8)]{Ndga} has an unfortunate typo (which affects neither the result, nor the proof); the correct form of the right-hand side is the one given in \eqref{eq:gnablaW}.

We deduce the Theorem from the following proposition whose proof is given in Sections~\ref{s:proof} and \ref{s:irred}.

\begin{proposition} \label{p:main}
Let $M_0=G/H$ be a Riemannian symmetric space of dimension $n > 5$ with no factors of constant curvature and let $\g=\h\oplus\m, \; \m=T_oM_0$, be the corresponding Cartan decomposition. Suppose that the maps $K \in \Hom(\m, \so(\m)), \; K: Z \mapsto K_Z$, and $\Psi \in \Hom(\Lambda^2\m, \m), \; \Psi: X \wedge Y \mapsto \Psi(X,Y)$, satisfy \eqref{eq:gbiad} and \eqref{eq:cyclePsi}, for all $X,Y,Z \in \m$. Then $\Psi=0$ and  $K \in \Hom(\m, \ad(\h))$. \end{proposition}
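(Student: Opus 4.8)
The plan is to exploit the two pointwise-algebraic constraints \eqref{eq:gbiad} and \eqref{eq:cyclePsi} as a linear system in the unknowns $K\in\Hom(\m,\so(\m))$ and $\Psi\in\Hom(\Lambda^2\m,\m)$, and to show that the only solutions are the ``trivial'' ones coming from the isometry algebra, i.e.\ $\Psi=0$ and $K_Z\in\ad(\h)$ for all $Z$. First I would reduce to the irreducible case: if $M_0=\prod_\alpha M_\alpha$ is the de Rham decomposition with $\m=\bigoplus_\alpha\m_\alpha$, then $\ad(\h)=\bigoplus_\alpha\ad(\h_\alpha)$, $\rho_0$ and $R_0$ respect the splitting, and by assumption no $M_\alpha$ has constant curvature. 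I would feed vectors lying in individual factors into \eqref{eq:gbiad} and into the relation for $(\n_ZW)(X,Y)$, using that $[X,Y]=0$ whenever $X,Y$ lie in different factors and that $W_0$ decomposes as a sum of the Weyl tensors of the factors plus cross terms built from the $\rho_0|_{\m_\alpha}$; comparing components in the various $\m_\alpha$ should force $\Psi$ and the ``off-diagonal'' blocks of each $K_Z$ to vanish and reduce everything to the statements of Proposition~\ref{p:irre} for each irreducible factor of dimension $\ge 4$, the low-dimensional factors being handled by the constant-curvature exclusion (in dimension $2,3$ every space has constant curvature, so there are none). This is the content flagged as Section~\ref{s:proof}.

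For the irreducible case I would split into three regimes by rank, as the excerpt advertises. The common first step (Lemma~\ref{l:Amh}) is presumably to introduce, for each $Z$, the decomposition $K_Z=A_Z+m(Z)$ with $A_Z$ the $\ad(\h)$-component and $m(Z)$ the component in the orthogonal complement $\so(\m)\ominus\ad(\h)$ with respect to the trace form, and to show that the map $Z\mapsto m(Z)$ is forced to be ``$\ad(\h)$-equivariant enough'' that it must be zero once one also knows $\Psi=0$; conversely, one shows that the only piece of \eqref{eq:gbiad} that can obstruct $m=0$ is controlled by $\Psi$. Concretely I would contract \eqref{eq:gbiad} (take the trace against $Z$, or pair with a fourth vector and symmetrise) to extract scalar identities, and use \eqref{eq:cyclePsi} together with skew-symmetry of $\Psi$ to pin down $\Psi$ in terms of $K$; then substitute back. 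The representation-theoretic input is that for an irreducible symmetric space that is not of constant curvature, the isotropy representation of $\h$ on $\m$ is irreducible and the $\h$-module $\so(\m)$ decomposes with $\ad(\h)$ appearing with multiplicity one, and $\Hom_\h(\m,\so(\m))=0$ unless there is an exceptional coincidence — so the genuinely dangerous situations are precisely the ``small'' ones, which is why rank one (Section~\ref{s:rk1}) and rank two (Section~\ref{s:rk2}) need separate, hands-on treatment while $\rk\ge 3$ (Lemma~\ref{l:rkge3}) falls to a uniform argument using the abundance of flat totally geodesic $2$-planes and the resulting restrictions from plugging a basis of a maximal flat into \eqref{eq:gbiad}.

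The rank-one case I would attack using the explicit structure of $\bc P^m$, $\bc H^m$, $\mathbb{H}P^m$, $\mathbb{H}H^m$ (and, excluded by the dimension bound, the Cayley plane and $\bc P^2$): here $\ad(\h)$ is large ($\mathfrak{u}(m)$, $\mathfrak{sp}(m)\oplus\mathfrak{sp}(1)$, etc.), $W_0$ is a known $\ad(\h)$-invariant tensor, and one writes $K_Z$ in a basis adapted to the (almost) complex or quaternionic structures, using \eqref{eq:gbiad} with $X,Y,Z$ spanning a complex line to kill the transverse components of $K$. For rank two I expect the argument to be the most delicate: one has a genuine two-dimensional flat but the isotropy representation is still irreducible, so the invariant-theory shortcut is weakest precisely there, and one must grind through \eqref{eq:gbiad} for a carefully chosen frame — this, together with the verification in Section~\ref{s:su3so3} that $SU(3)/SO(3)$ is a real exception, signals that rank two is where the classification of irreducible symmetric spaces of rank two has to be invoked case by case. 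Accordingly, \textbf{the main obstacle} I anticipate is the rank-two analysis (Lemma~\ref{l:rk2}): there is no soft representation-theoretic reason for $m=0$, one cannot lean on many independent flat planes as in higher rank, and one must separate the true conclusion for the generic rank-two space from the genuine failure at $SU(3)/SO(3)$, so the proof there is necessarily a finite but intricate inspection of the remaining rank-two types.
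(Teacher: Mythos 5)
Your outline captures the broad architecture of the paper — reduce to irreducible via the de Rham splitting, then split by rank, with rank~one and rank~two needing hands-on case work — but there is a genuine gap in your reduction step, and your description of the key structural lemma does not match how the argument actually runs.

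\textbf{The reduction to irreducible factors is more delicate than you assume.} You dismiss the low-dimensional irreducible factors by saying ``in dimension~2,3 every space has constant curvature, so there are none,'' but the dangerous factors are \emph{not} in dimension~2 or~3: Proposition~\ref{p:main} is genuinely \emph{false} for the irreducible spaces $\bc P^2$ (dimension~4) and $SU(3)/SO(3)$ (dimension~5), as the paper shows by a dimension count and the explicit solutions of Section~\ref{s:su3so3}. The hypothesis $n>5$ excludes these as the whole of $M_0$, but it does not exclude them as \emph{factors} of a reducible $M_0$ of dimension~$>5$. The reducible argument (Proposition~\ref{p:red}) therefore has to yield strictly more than a naive projection onto factors: it yields $\Psi=\Phi=0$ outright, and it is only with that extra information (taken as an additional hypothesis in Proposition~\ref{p:irre} when $n\le 5$) that the conclusion $K\in\ad(\h)$ can be recovered for the $\bc P^2$ and $SU(3)/SO(3)$ factors. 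Your proposal, as written, would wrongly conclude nothing needs to be said about them.

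\textbf{Lemma~\ref{l:Amh} plays a different role than you ascribe to it.} You frame it as decomposing $K_Z=A_Z+m(Z)$ with $m(Z)$ in the orthogonal complement of $\ad(\h)$ and showing $m=0$ by equivariance. In the paper, Lemma~\ref{l:Amh} is about maps $A\colon\m\to\h$ satisfying $\sigma_{XYZ}\<[X,Y],AZ\>=0$ and asserts they are of the form $\ad_T$ with $T\in\m$ — it is applied first to the map $A$ with $\Phi(X,Y)=A[X,Y]$ supplied by Lemma~\ref{l:rkge3} (to kill $\Phi$), and then a second time inside the proof of Lemma~\ref{l:trace}\eqref{it:ltrace3} to the bilinear map $(X_1,X_2)\mapsto[K_ZX_1,X_2]-[K_ZX_2,X_1]$, where it produces an auxiliary tensor $T(X,Y)$ that is then eliminated via Szab\'o's lemma and a trace identity. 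This two-stage bootstrap — first $\Phi=0$, then $K=0$ — is structurally different from the single equivariance argument you describe, and the normalization \eqref{eq:Kperpadh} that $K_X\perp\ad(\h)$ is adopted from the outset precisely so that the two terms of \eqref{eq:biadphi} split cleanly into $\ad(\h)$ and $\ad(\h)^\perp$ pieces (equation~\eqref{eq:twopieces}). Also, your claim that $\Hom_\h(\m,\so(\m))=0$ ``unless there is an exceptional coincidence'' is not the mechanism at work; that space is typically nonzero, and the paper must genuinely use both algebraic constraints rather than pure Schur-type vanishing.

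You are right that rank two is delicate and requires consulting the classification, and right that the rank-one cases are handled via the complex and quaternionic structures; your identification of $SU(3)/SO(3)$ as a genuine exception in rank two is also correct. But as a proof proposal, the treatment of the low-dimensional factors in the reducible case is a substantive error, and the shape of the irreducible argument (the $\Phi$-first bootstrap through Lemma~\ref{l:Amh}, the surjectivity of inner derivations on Lie triple systems, the trace identity of Lemma~\ref{l:trace}\eqref{it:ltrace1}) is not what you sketched.
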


\begin{proof}[Proof of the Theorem assuming Proposition~\ref{p:main}]
Let $\m=\oplus_{s=1}^N \m_s, \; N \ge 1$, be the orthogonal decomposition corresponding to the de Rham decomposition of $M_0$. From Proposition~\ref{p:main}, $K_Z \in \ad(\h)$, for all $Z \in \m$, so the sum of the first two terms on the right-hand side of \eqref{eq:gnablaW} vanishes. The same is true for the last two terms, as every $\m_s$ is an invariant subspace of $K_Z \in \ad(\h)$ and as the restriction of $\rho_0$ to each $\m_s$ is a multiple of the identity (as the irreducible factors are Einstein). It then follows from \eqref{eq:gnablaW} that $\n W=0$. By \cite{Rot}, as $\n W=0$, but $W \ne 0$ (see Remark~\ref{rem:wne0}), the manifold $M^n$ is locally symmetric. To prove that $M^n$ is locally isometric to $M_0$, it suffices to show that $R=R_0$. As the Weyl tensors of $M^n$ and $M_0$ are equal, it suffices to show that $\rho=\rho_0$, by \eqref{eq:weyldef}.

For a symmetric operator $A \in \Sym(\m)$ define $S_A \in \Sym(\so(\m))$ by $S_A T=A T + T A$, for $T \in \so(\m)$. Viewing $R, R_0$ and $W$ as the elements of $\Sym(\so(\m))$ we have $R=W+S_\rho$ and $R_0=W+S_{\rho_0}$ by \eqref{eq:weyldef}. As the space $M$ is symmetric, hence is locally a product of Einstein spaces, we have $[R,S_\rho]=0$, so $[W,S_\rho]=0$. Moreover, as $\n W=0$, we have $R(T).W=0$, for all $T \in \so(\m)$, where $R(T)$ is viewed as a differentiation of the tensor algebra. It follows that $W(T).W+S_\rho(T).W=0$. Denote $\tau=\rho-\rho_0$. As the above equations also hold for $M_0$, we get by linearity that $[W,S_\tau]=0$ and $S_\tau(T).W=0$. The later equation is equivalent to $W([S_\tau(T),N])+[W(N),S_\tau(T)]=0$, for all $N,T \in \so(\m)$, so we obtain that $\tau$ satisfies the following equations:
\begin{equation}\label{eq:tau}
    [W,S_\tau]=0, \hskip 2cm [W, \ad_{S_\tau(T)}]=0, \quad \text{for all } T \in \so(\m),
\end{equation}
where the brackets and $\ad$ are in sense of the Lie algebra $\mathfrak{gl}(\so(\m))$.

Let $\mathfrak{s}\subset\so(\m)$ be the Lie subalgebra generated by the subspace $S_\tau(\so(\m)) \subset \so(\m)$. Let $e_i, \; i=1, \dots, n$, be orthonormal eigenvectors of $\tau$, and $\la_i$ be the corresponding eigenvalues. Then $e_i \wedge e_j$ is an eigenvector of $S_\tau$, with the eigenvalue $\la_i + \la_j$. It follows that if $\la_i + \la_j \ne 0$, then $e_i \wedge e_j \in S_\tau(\so(\m)) \subset \mathfrak{s}$. Moreover, as $[e_i \wedge e_k, e_k \wedge e_j]=-e_i \wedge e_j$, we obtain that $e_i \wedge e_j \in \mathfrak{s}$, if there exists $k=1, \dots, n$ such that both $\la_i + \la_k$ and $\la_k + \la_j$ are nonzero. It follows that either $\tau =0$, so $\rho=\rho_0$ and we are done; or $\tau$ has exactly two eigenvalues: $\la$, of multiplicity $p$, and $-\la$, of multiplicity $q$, with $\la \ne 0, \; p, q > 0, \; p+q=n$, in which case $\mathfrak{s} =\so(p) \oplus \so(q)$ standardly embedded in $\so(\m)$; or, in all the other cases, $\mathfrak{s} =\so(\m)$. We show that the last two cases imply that $W$ is a multiple of the identity on $\so(\m)$.

Indeed, from the second equation of \eqref{eq:tau}, $W$ commutes with $\ad_T$, for all $T \in \mathfrak{s}$. First, suppose that $\mathfrak{s} =\so(\m)$. As the eigenspaces of an operator on an arbitrary Lie algebra, which commutes with all the $\ad$'s, are ideals and as $\so(\m)$ is simple (since $n > 5$), we get $W=c \, \id_{\so(\m)}$, for some $c \in \br$. Next, suppose that $\mathfrak{s} =\so(p) \oplus \so(q), \; p, q > 0, \; p+q=n$. By relabelling the eigenvectors, we can assume that the $\la$-eigenspace of $\tau$ is spanned by $e_1, \dots, e_p$ and the $(-\la)$-eigenspace, by $e_{p+1}, \dots, e_n$. Then the eigenvalues of $S_\tau$ are $2\la, 0$ and $-2\la$, with the eigenspaces $E_{2\la}=\so(p)=\Span_{s,t \le p} (e_s \wedge e_t), \; E_{-2\la}=\so(q)=\Span_{a,b > p} (e_a \wedge e_b)$ and $E_0=\Span_{s \le p < a} (e_s \wedge e_a)$, respectively. By the first equation of \eqref{eq:tau}, these subspaces are $W$-invariant. First suppose that $p \ne 1, 4$. Then $\so(p)$ is either simple or one-dimensional, so, as the restriction of $W$ to $\so(p)$ commutes with $\ad_{\so(p)}$, we obtain that $W_{|\so(p)}=c \, \id_{\so(p)}$. Then, as $[e_s \wedge e_t, e_t \wedge e_a]=-e_s \wedge e_a$, for $s, t \le p < a$, and as $W$ commutes with $\ad_{\so(p)}$ on the whole $\so(\m)$, we get that  $W_{|E_0}=c \, \id_{E_0}$. If $p=4$, then $\so(p)$ is the direct sum of the ideals $\mathfrak{s}_1, \mathfrak{s}_2$ isomorphic to $\so(3)$ and we have  $W_{|\mathfrak{s}_\alpha}=c_\alpha \, \id_{\mathfrak{s}_\alpha}, \; \alpha=1,2$. As for all nonzero $T \in \mathfrak{s}_1$, the restriction of $\ad_T$ to $E_0$ is nonsingular and as $W$ commutes with $\ad_T$, we obtain that $W_{|E_0}=c_1 \, \id_{E_0}$. Applying the same argument to $\mathfrak{s}_2$ we get $c_1=c_2=c$ and $W_{|E_0}=c \, \id_{E_0}$. Hence for all $p \ne 1$, we have $W_{|E_{2\la} \oplus E_0}=c \, \id_{E_{2\la} \oplus E_0}$. Interchanging $p$ and $q$ and using the fact that $p+q=n \ge 6$ we get $W=c \, \id_{\so(\m)}$.

It follows that $W=c \, \id_{\so(\m)}$, unless $\rho=\rho_0$. But then, as the ``Ricci tensor" of the Weyl tensor vanishes, we have $\sum_i W(X \wedge e_i)e_i =0$, for all $X \in \m$. So $c = 0$, hence $W = 0$, which is a contradiction by Remark~\ref{rem:wne0}.

This proves the Theorem assuming Proposition~\ref{p:main}.
\end{proof}

\section{Proof of Proposition~\ref{p:main}: reducible case}
\label{s:proof}

In this section and in the next section we prove Proposition~\ref{p:main}. We start with the reducible case and prove the following proposition.

\begin{proposition} \label{p:red}
Let $M_0$ be a reducible Riemannian symmetric space with no factors of constant curvature and let $o \in M_0$. Let $\m=T_oM_0 = \oplus_{s=1}^N \m_s, \; N \ge 2$, be the orthogonal decomposition corresponding to the de Rham decomposition of $M_0$. Suppose that the maps $K \in \Hom(\m, \so(\m)), \; K: Z \mapsto K_Z$, and $\Psi \in \Hom(\Lambda^2\m, \m), \; \Psi: X \wedge Y \mapsto \Psi(X,Y)$, satisfy \eqref{eq:cyclePsi} and \eqref{eq:gbiad}, for all $X,Y,Z \in \m$ and that $\Phi$ is defined by \eqref{eq:defPsi}. Then
\begin{enumerate}[\rm 1.]
  \item \label{it:red1}
  $\Psi=\Phi=0$.
  \item \label{it:red2}
  $K \in \Hom(\m, \oplus_{s=1}^N \so(\m_s))$ and for all $s \ne r, \; s,r=1, \dots, N$, there exist linear maps $P_{sr}:\m_r \to \h_s$ such that $(K_Z)_{|\m_s}=\ad_{P_{sr}Z}$, for $Z \in \m_r$.
\end{enumerate}
\end{proposition}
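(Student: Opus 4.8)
The plan is to exploit the product structure of $M_0$ and feed specially chosen triples into the Bianchi identity \eqref{eq:gbiad}. Since $M_0$ has no factor of constant curvature, each $\m_s$ is an irreducible symmetric space of dimension $\ge 4$ and $\g=\bigoplus_{s=1}^{N}\g_s$ is a direct sum of simple ideals $\g_s=\h_s\oplus\m_s$; consequently $[\m_s,\m_r]=[\h_s,\m_r]=0$ for $s\ne r$, $\ad(\h)=\bigoplus_s\ad(\h_s)$ with $\ad(\h_s)$ supported on $\m_s$ and faithful there, and $R_0(X,Y)=-\ad_{[X,Y]}$ vanishes unless $X,Y$ lie in a common factor $\m_s$, in which case it is $-R_s(X,Y)$ on $\m_s$ and $0$ elsewhere. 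I will write $K_Z^{(sr)}=\pi_sK_Z\pi_r$ for the block components of $K_Z$ relative to $\m=\bigoplus_s\m_s$, so $K_Z^{(rs)}=-(K_Z^{(sr)})^{*}$. The method is then to substitute into \eqref{eq:gbiad} triples $(X,Y,Z)$ whose components lie in prescribed de Rham factors and to project the resulting $\End(\m)$-identity onto the blocks $\pi_a(\cdot)\pi_b$; the gain is that the $\ad_{[\,\cdot,\cdot\,]}$-terms degenerate, since a bracket of vectors from two distinct factors is $0$ and $\ad$ of an element of $\h_s$ is carried entirely on $\m_s$.

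Carrying this out, every projected equation becomes, modulo terms already shown to vanish, a linear relation among block components of $K$ and $\Psi$ supported on a single factor $\m_s$. Two patterns occur: ``Euclidean'' relations of the form $\langle Z,V\rangle\cdot(\text{a component of }\Psi)=0$ or $R_s(X,Y)w=\langle\varphi,X\rangle Y-\langle\varphi,Y\rangle X$ with $Z,V,w,\varphi\in\m_s$, and genuine first-Bianchi identities $\sigma_{XYZ}R_s(X,Y)\ell Z=\sigma_{XYZ}\beta(X,Y)Z$ on $\m_s$. I will use two single-factor facts: (a) no non-zero $w\in\m_s$ can have all sectional curvatures through it equal, so the second relation, after a Ricci contraction, forces $w=0$; and (b) for $M_s$ irreducible and not of constant curvature, the first-Bianchi identity above forces $\beta=0$ and $\ell\in\ad(\h_s)\oplus\br\,\id$. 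Running the triples of types $(p,q,r)$ and $(p,p,r)$ and bootstrapping — the ``Euclidean'' relations delete components of $\Psi$, which trivialize more of the $\ell$-relations — I expect to obtain, in order: all ``mixed'' components $\pi_t\Psi(X,Y)$ (indices of $X$, $Y$ and $\m_t$ not all equal) vanish; combined with \eqref{eq:cyclePsi} this also kills the ``leakage'' components $\pi_r\Psi|_{\m_s\times\m_s}$, $r\ne s$, and then the $(t,s)$-block of a type-$(s,s,t)$ triple gives $\pi_s\Psi|_{\m_s\times\m_s}=0$, so $\Psi=0$; and all off-diagonal blocks $K_Z^{(sr)}$, $s\ne r$, are zero, i.e.\ $K\in\Hom(\m,\bigoplus_s\so(\m_s))$.

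With $\Psi=0$ and $K$ block-diagonal in hand, I feed one more triple of type $(s,s,r)$ ($X,Y\in\m_s$, $Z\in\m_r$, $r\ne s$) into \eqref{eq:gbiad} and restrict the identity to $\m_s$: every surviving term collapses to a curvature operator of $M_s$, and the identity becomes
\[
[\,K_Z^{(ss)},R_s(X,Y)\,]=R_s(K_Z^{(ss)}X,Y)+R_s(X,K_Z^{(ss)}Y),\qquad X,Y\in\m_s,
\]
i.e.\ the skew operator $K_Z^{(ss)}$ is a derivation of the curvature tensor (equivalently, of the Lie triple system) of $M_s$; for an irreducible symmetric space such operators are exactly $\ad(\h_s)$, so $K_Z^{(ss)}\in\ad(\h_s)$. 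Faithfulness of the $\h_s$-action on $\m_s$ then lets me set $P_{sr}Z:=(\ad|_{\h_s})^{-1}(K_Z^{(ss)})\in\h_s$, linear in $Z\in\m_r$, which is part~2. Finally, $K$ being block-diagonal while $\rho_0$ is a scalar on each Einstein factor $\m_s$ gives $[\rho_0,K_X]=0$ for all $X$, whence $\Phi=\Psi=0$ by \eqref{eq:defPsi}; this is part~1.

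I expect the main obstacle to be the two single-factor inputs (a) and (b) used in the core step: these are exactly where the ``no constant curvature'' hypothesis is indispensable (both fail for round spheres), and establishing them uniformly over all irreducible non-constant-curvature $M_s$ — including the rank-one $\bc P^m$ and $\bc H^m$ — is the technical heart; this is essentially the same algebraic machinery the irreducible case requires. A secondary, organizational, difficulty is to sequence the many block projections of \eqref{eq:gbiad} so that the bootstrap in the core step closes, each relation being invoked only after the quantities it would otherwise be entangled with (the off-diagonal $K$-blocks, the mixed $\Psi$-components) have been eliminated.
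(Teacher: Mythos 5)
Your overall skeleton matches the paper's: feed block-structured triples into \eqref{eq:gbiad}, project onto de Rham factors, reduce to single-factor algebraic facts, and conclude via the theorem that derivations of a semisimple Lie triple system are inner. The closing identity you write for $K_Z^{(ss)}$ (as a derivation of $R_s$) is exactly the one the paper arrives at, and your invocation of Lister's theorem is the right tool. However, there are three concrete gaps.

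\emph{(i) Your fact (a) is not settled by ``a Ricci contraction.''} The relation $R_s(X,Y)w=\<\varphi,X\>Y-\<\varphi,Y\>X$ plus Einstein gives $\varphi=cw$, but from there the paper has to split cases: for $\rk M_s\ge 2$ it picks $X$ arbitrary and a commuting nonproportional $Y$ in a common Cartan subspace, forcing $\varphi=0$ and then $w=0$ by irreducibility of the holonomy; for $\rk M_s=1$ it uses transitivity of the isotropy group on the unit sphere to promote the relation to constant curvature. Your slogan ``no nonzero $w$ has all sectional curvatures through it equal'' is true, but proving it \emph{is} the rank split; the Ricci trace alone does not force $w=0$.

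\emph{(ii) Your fact (b) is the real gap.} Once one knows $\pi_sK_ZV=\<Z,V\>a_s$ and $\Psi(Z,X)=-\<b_r,Z\>X+\<b_s,X\>Z$ (your ``Euclidean'' relations only \emph{constrain} the components to this form; they do not delete them), the equation obtained from a type-$(s,s,r)$ triple becomes, in the paper's notation, $\K\bigl(\pi_sK_Z\pi_s+\<a_r,Z\>\id\bigr)=F$ with $F(X,Y,V)=2\<b_r,Z\>(X\wedge Y)V$, where $\K$ is the Lie-triple-system coboundary. Showing $b_r=0$ is done by applying $\K$ once more ($\K^2=0$), computing $\K F$ explicitly, and deriving a constant-curvature contradiction. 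Your proposal has no mechanism for this; the phrase ``first-Bianchi identity forces $\beta=0$'' is precisely what has to be proved, and it does not follow from irreducibility or Einstein alone. (Also, the $\oplus\br\id$ in your conclusion is harmless but spurious: $\id$ is not a derivation of a semisimple LTS; the extra scalar is the $\<a_r,Z\>\id$ term, which is killed at the very end because inner derivations are skew.)

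\emph{(iii) Your order is circular.} You claim the off-diagonal $K$-blocks vanish \emph{before} the derivation step, but $\pi_sK_Z\pi_r V=\<Z,V\>a_s$ (for $Z,V\in\m_r$, $r\ne s$) need not vanish until you know $a_s=0$, and $a_s=0$ is obtained only \emph{after} you have the derivation conclusion: the operator $\pi_sK_Z\pi_s+\<a_r,Z\>\id$ equals $\ad_{P_{sr}(Z)}$, which is skew, so its symmetric part $\<a_r,Z\>\id$ vanishes. So ``all off-diagonal blocks are zero'' should come as a consequence of, not a prerequisite to, the derivation argument.
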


Proposition~\ref{p:red} reduces the proof to the case when $M_0$ is irreducible, which will be treated in Section~\ref{s:irred}. Indeed, if $M_0$ is reducible, the claim of Proposition~\ref{p:main} follows from Proposition~\ref{p:red}, except for the fact that for $Z \in \m_s$, the restriction of $K_Z$ to $\m_s$ belongs to $\ad(\h_s)$. But the projections of $K$ to $\Hom(\m_s, \so(\m_s))$ and of $\Psi$ to $\Hom(\Lambda^2\m_s, \m_s)$ still satisfy \eqref{eq:cyclePsi} and \eqref{eq:gbiad}, for all $X,Y,Z \in \m_s$ (compare to Remark~\ref{r:tg} in Section~\ref{s:irred}), so the fact that $(K_Z)_{|\m_s} \in \ad(\h_s)$ for $Z \in \m_s$ will follow from the proof of Proposition~\ref{p:main} for each irreducible factor separately. Note however, that a reducible $M_0$ may have irreducible factors of dimension five or less, namely the spaces $SU(3)/SO(3)$ and $\bc P^2$ and their duals. For these spaces as such, the claim of Proposition~\ref{p:main} is false; this follows from the dimension count for $\bc P^2$ and from the results of Section~\ref{s:su3so3} for $SU(3)/SO(3)$. However, if they appear as irreducible factors of a reducible space $M_0$, we additionally know that $\Phi=0$ by Proposition~\ref{p:red}\eqref{it:red1}; then the claim of Proposition~\ref{p:main} is true, as we will show in Lemma~\ref{l:trace}\eqref{it:ltrace3} and in Section~\ref{s:rk1}.

\begin{proof}[Proof of Proposition~\ref{p:red}]
Let $M_0 = \prod_{s=1}^N M_s$ be the de Rham decomposition of $M_0$ on the irreducible symmetric spaces $M_s$ such that $T_oM_s=\m_s$.
Denote $R_s$ the curvature tensor of $M_s$ and $\pi_s:\m \to \m_s$ the orthogonal projection.

Choose $X,Y \in \m_s, \, Z, V \perp \m_s$ and act by the both sides of \eqref{eq:gbiad} on $V$. As for $s \ne r, \; [\m_s, \m_r]=[[\m,\m_r], \m_s]=0$ and $\<\m_s, \m_r\>=0$, we obtain after projecting to $\m_s$:
\begin{equation}\label{eq:subgbiad1}
[[X, Y],\pi_sK_ZV]+\<\Psi(Y,Z), V\> X + \<\Psi(Z,X), V\> Y -\<Z,V\> \pi_s \Psi(X,Y)=0.
\end{equation}
It follows that for any $Z, V \perp \m_s, \; Z \perp V$, the vector $T=\pi_sK_ZV \in \m_s$ satisfies $R_s(X,Y)T=(X \wedge Y)T'$, for any
$X, Y \in \m_s$, where $T' \in \m_s$ is defined by $\<T', X\>=\<\Psi(Z,X), V\>$, for $X \in \m_s$. Now, if $\rk M_s > 1$, we can take $X \in \m_s$ arbitrarily and then take $Y \in \m_s$ to commute with $X$ and to be nonproportional to $X$. This shows that $T'=0$, and therefore $R_s(X,Y)T=0$, for all $X, Y \in \m_s$, so $T=0$, as $M_s$ is irreducible and $\dim M_s >1$. Suppose now that $\rk M_s = 1$. Then from the fact that $M_s$ is Einstein, it follows that $T'=cT$, for some nonzero constant $c$, so $R_s(X,Y)T=c(X \wedge Y)T$. Moreover, assuming $T \ne 0$, we get $R_s(X,Y)Z=c(X \wedge Y)Z$, for all $X, Y, Z \in \m_s$, as the isotropy group acts transitively on the unit sphere of $\m_s$. It follows that $M_s$ has constant curvature, a contradiction. So in the both cases, $T=T'=0$, that is, for any $Z, V \perp \m_s$ with $Z \perp V$, we have $\pi_sK_ZV=0$ and $\<\Psi(Z,X), V\>=0$, for all $X \in \m_s$. By linearity, there exist $a_s, b_s \in \m_s$ such that for all $Z, V \perp \m_s$, we have $\pi_sK_ZV=\<Z,V\> a_s$, and for all $Z \perp \m_s, \; X \in \m_s$, $\Psi(Z,X)=\pi_s\Psi(Z,X)+\<b_s,X\>Z$. Substituting this to \eqref{eq:subgbiad1} we obtain $[[X, Y],a_s]+ \<b_s,X\>Y - \<b_s,Y\>X  -\pi_s \Psi(X,Y)=0$. Moreover, as $\Psi$ is skew-symmetric, we get from the above that $\Psi(Z,X)=-\<b_r,Z\>X+\<b_s,X\>Z$, for $X \in \m_s, \, Z \in \m_r, \; s \ne r$, so by \eqref{eq:cyclePsi}, $\pi_r \Psi(X,Y)=0$, for all $X,Y \in \m_s, \; r \ne s$. Thus there exist $a_s, b_s \in \m_s, \; s=1, \dots, N$, such that
\begin{equation}\label{eq:piKPsi}
\begin{split}
    \Psi(Z,X) &=-\<b_r,Z\>X+\<b_s,X\>Z,  \quad  X \in \m_s, \, Z \in \m_r, \; s \ne r,\\
    \Psi(X,Y) &= [[X, Y],a_s]+ \<b_s,X\>Y - \<b_s,Y\>X,  \quad  X, Y \in \m_s, \\
    \pi_sK_ZV &=\<Z,V\> a_s,  \quad  Z, V \perp \m_s,\\
    \pi_rK_ZX &= -\<a_s, X\>Z \quad  X \in \m_s, \, Z \in \m_r, \; s \ne r,
\end{split}
\end{equation}
where the fourth equation follows from the third one and the fact that $K_Z$ is skew-symmetric.

Now take $X,Y, V\in \m_s, \; Z  \in \m_r, \; r \ne s$, and act by the both sides of \eqref{eq:gbiad} on $V$. Projecting the resulting equation to $\m_s$ and using \eqref{eq:piKPsi} we get $[[X, Y],\pi_sK_ZV]-\pi_sK_Z[[X, Y],V]-[[\pi_sK_ZY, X],V]+[[\pi_sK_ZX, Y],V]+2\<a_r,Z\>[[X, Y],V]
+2\<b_r,Z\>\<Y, V\> X -2\<b_r,Z\>\<X, V\> Y =0$. Let $\cp^m(\m_s,\m_s), \; m \ge 0$, be the space of $m$-cochains of the Lie triple system $\m_s$ and let $\K: \cp^m(\m_s,\m_s) \to \cp^{m+2}(\m_s,\m_s)$ be the coboundary operator \cite{Yam}. We have $\cp^1(\m_s,\m_s)=\End(\m_s)$ and for $L \in \cp^1(\m_s,\m_s)$, the cochain $(\K L) \in \cp^3(\m_s,\m_s)$ is defined by $(\K L)(X_1,X_2,X_3)=-L[[X_1, X_2],X_3]+[[LX_1, X_2],X_3]+[[X_1, LX_2],X_3]+[[X_1, X_2],LX_3]$, for $X_j \in \m_s$. As $(\K \, \id_{\m_s})(X_1,X_2,X_3) = 2[[X_1, X_2],X_3]$, the above equation can be written as
\begin{equation}\label{eq:delta}
\K (\pi_sK_Z\pi_s+\<a_r,Z\>\id_{\m_s})=F, \quad \text{where} \quad F \in \cp^3(\m_s,\m_s), \quad F(X, Y, V)=2\<b_r,Z\>( X \wedge Y) V,
\end{equation}
for $X, Y, V \in \m_s$. It follows that $\K F=0$, for $\K: \cp^3(\m_s,\m_s) \to \cp^5(\m_s,\m_s)$. Using \cite[Eq.~(11)]{Yam} we obtain after simplification:
\begin{multline*}
0=(\K F)(X_1,X_2,X_3,X_4,X_5)=2\<b_r,Z\>(-(X_1\wedge X_2)[[X_3,X_4],X_5] \\
+[[(X_1\wedge X_2)X_3,X_4],X_5]+[[X_3,(X_1\wedge X_2)X_4],X_5]+[[X_3,X_4],(X_1\wedge X_2)X_5]),
\end{multline*}
for all $X_i \in \m_s, \; i=1, \dots, 5$. Assume $\<b_r,Z\> \ne 0$. As $M_s$ is not of constant curvature, we have $\dim M_s \ge 4$, so we can take linearly independent $X_1, X_2, X_5=X_3, X_4 \in \m_s$ such that $X_1, X_2 \perp X_3, X_4$, which gives $[[X_3,X_4],X_3] \in \Span(X_3, X_4)$, for all $X_3, X_4 \in \m_s$. But then $R_s(X,Y)X \parallel Y$, for all $X, Y \in \m_s, \; X \perp Y$, which easily implies that $M_s$ has constant curvature, a contradiction. It follows that $\<b_r,Z\>=0$, for all $Z \in \m_r$, so $b_r=0$. Then from \eqref{eq:delta}, the operator $\pi_sK_Z\pi_s+\<a_r,Z\>\id_{\m_s} \in \cp^1(\m_s,\m_s)=\End(\m_s)$ is a $1$-cocycle, that is, a derivation of $\m_s$. By \cite[Theorem~2.11]{Lis}, every derivation of $\m_s$ is inner, so there exists $P_{sr} \in \Hom(\m_r,\h_s)$ such that $\pi_sK_Z\pi_s+\<a_r,Z\>\id_{\m_s}= (\ad_{P_{sr}(Z)})_{\m_s}$. As both $K_Z$ and $\ad_{P_{sr}(Z)}$ are skew-symmetric, we obtain $a_r=0$. Now from \eqref{eq:piKPsi} and the fact that $a_r=b_r=0$ it follows that $\Psi=0$ and that $K_Z \m_s \subset \m_s$, for all $Z \in \m$ and all $s=1, \dots, N$. In particular, $[K_Z, \rho_0]=0$ (as every $M_s$ is Einstein), so $\Phi = 0$, by \eqref{eq:defPsi}. This proves assertion~\ref{it:red1}. Moreover, for every $Z \in \m_r, \; \pi_sK_Z\pi_s = (\ad_{P_{sr}(Z)})_{\m_s}$, which proves assertion~\ref{it:red2}.
\end{proof}

\section{Proof of Proposition~\ref{p:main}: irreducible case}
\label{s:irred}

By the arguments following the statement of Proposition~\ref{p:red} in Section~\ref{s:proof}, to prove Proposition~\ref{p:main} in full, we need to prove its claim for all irreducible symmetric spaces $M_0$ not of constant curvature, where in the cases when $n \le 5$, we may additionally assume that $\Psi=\Phi=0$. Note that a compact irreducible symmetric space of dimension $n \le 5$ of non-constant curvature is locally homothetic either to $SU(3)/SO(3) \; (n=5)$ or to $\bc P^2 \; (n=4)$.

We start with the following two observations.

First of all, in the irreducible case, the endomorphisms $\rho_0$ and $K_X$ commute, so from \eqref{eq:defPsi} $\Psi = \Phi$, hence equation \eqref{eq:gbiad} becomes
\begin{equation}\label{eq:biadphi}
    \sigma_{XYZ}([\ad_{[X, Y]},K_Z]+\ad_{[K_XY-K_YX, Z]}+ \Phi(X,Y)\wedge Z)=0,
\end{equation}
for all $X, Y, Z \in \m$, with $\Phi$ still satisfying \eqref{eq:cyclePhi}.

Secondly, if Proposition~\ref{p:main} is satisfied for a compact irreducible symmetric space, then it is also satified for its noncompact dual. Indeed, passing from $M_0$ to its dual effects in changing the sign of all the brackets $[X,Y],\; X, Y \in \m$, to the opposite. It follows that if a pair $(K, \Phi)$ satisfies \eqref{eq:biadphi} (and \eqref{eq:cyclePhi}) for a space $M_0$, then the pair $(K, -\Phi)$ satisfies the same equations for the dual space.

So we need to prove the following proposition.

\begin{proposition} \label{p:irre}
Let $M_0$ be a compact irreducible symmetric space of non-constant curvature. Let $o \in M_0$ and let $\m=T_oM_0$. Suppose that the maps $K \in \Hom(\m, \so(\m)), \; K: Z \mapsto K_Z$ and $\Phi \in \Hom(\Lambda^2\m, \m)$, $\Phi: X \wedge Y \mapsto \Phi(X,Y)$, satisfy \eqref{eq:cyclePhi} and \eqref{eq:biadphi}, for all $X,Y,Z \in \m$. If $n \le 5$ we additionally assume that $\Phi=0$. Then $K \in \Hom(\h, \ad(\h))$ and $\Phi=0$.
\end{proposition}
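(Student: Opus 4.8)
The plan is to split off the part of $K$ responsible for the obstruction and show it vanishes together with $\Phi$. Equip $\so(\m)$ with the $\ad(\h)$-invariant inner product $\langle T,N\rangle=-\Tr(TN)$ and write $\so(\m)=\ad(\h)\oplus\ag$ as an orthogonal sum of $\ad(\h)$-submodules; since $M_0$ has no flat factor, $\ad\colon\h\to\so(\m)$ is injective, so $\ad(\h)\cong\h$, and since $M_0$ does not have constant curvature, $\ag\neq0$. Decompose $K_Z=K^{\h}_Z+\mA_Z$ accordingly, $\mA\in\Hom(\m,\ag)$; the assertion to be proved is exactly $\mA=0$ and $\Phi=0$. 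The basic structural observation is that, as $\h=[\m,\m]$ and $R_0(X,Y)=-\ad_{[X,Y]}$, the image of the map $X\wedge Y\mapsto\ad_{[X,Y]}$ is precisely $\ad(\h)$; in particular $\ad_{[X,Y]}=0$ whenever $X,Y$ lie in a common Cartan subspace $\t\subset\m$ (with $\dim\t=\rk M_0$), which is what makes higher rank tractable.

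First I would establish the general algebraic constraints on $(\mA,\Phi)$ — this is the intended content of the preparatory Lemma~\ref{l:Amh}. Projecting \eqref{eq:biadphi} onto $\ag$ discards $\ad_{[K_XY-K_YX,Z]}$ together with the $\ad(\h)$-part of $[\ad_{[X,Y]},K_Z]$, yielding a relation between $\sigma_{XYZ}[\ad_{[X,Y]},\mA_Z]$ and the $\ag$-component of $\sigma_{XYZ}\,\Phi(X,Y)\wedge Z$; projecting onto $\ad(\h)$ gives the complementary relation. Suitable contractions over $\m$, combined with Schur's lemma for the irreducible $\h$-module $\m$ (and the known decompositions of $\ag$, of $\Hom(\m,\ag)$, and of the ``hook'' summand of $\Hom(\Lambda^2\m,\m)$ that contains $\Phi$), then confine $\mA$ and $\Phi$ to a short list of $\h$-isotypic components; for spaces with a sufficiently small isotropy representation these components are already zero, and otherwise one is left with an explicit, finite linear system.

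The proof then divides according to $\rk M_0$. For $\rk M_0\ge3$ (Lemma~\ref{l:rkge3}, Section~\ref{s:rr}) I would substitute triples $X,Y,Z$ from a Cartan subspace $\t$ — where the first Bianchi term drops out — together with their translates under the isotropy group and the restricted Weyl group; since $\dim\t\ge3$ there are enough independent configurations to force $\Phi=0$ and, after pairing with restricted-root vectors, $\mA=0$. For $\rk M_0=2$ (Lemma~\ref{l:rk2}, Section~\ref{s:rk2}) a maximal flat is only two-dimensional and this device no longer closes the argument by itself; here one must run through the short list of rank-two irreducible symmetric spaces of non-constant curvature, using the explicit restricted-root system and the $\h$-irreducible decompositions above to solve the residual system, with $SU(3)/SO(3)$ (where $n=5$, where $\Phi=0$ is assumed outright, and which is shown in Section~\ref{s:su3so3} to be a genuine exception when $\Phi$ is not assumed to vanish) being exactly the place where the bare identity \eqref{eq:biadphi} has a nonzero solution. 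Finally, for $\rk M_0=1$ (Section~\ref{s:rk1}) the spaces in question are $\bc P^m$, $\mathbb{H}P^m$ and $\Oc P^2$ (the spheres being excluded as having constant curvature), and I would exploit the complex, quaternionic or octonionic structure carried by the isotropy representation directly, again using $\Phi=0$ in the four-dimensional case $\bc P^2$. In every case $\mA=0$ and $\Phi=0$, i.e. $K\in\Hom(\m,\ad(\h))$ and $\Phi=0$.

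The step I expect to be the main obstacle is the rank-two analysis: unlike in rank $\ge3$ there is no slack inside a maximal flat, so no uniform ``plug in commuting elements'' argument suffices, and one is committed to a representation-theoretic case check that must simultaneously rule out nonzero $(K,\Phi)$ for all rank-two spaces while being consistent with the genuine failure of the infinitesimal statement at $SU(3)/SO(3)$. A secondary difficulty is the bookkeeping in Lemma~\ref{l:Amh}: the module $\ag$ and the hook summand of $\Hom(\Lambda^2\m,\m)$ must be decomposed accurately enough that the contraction step leaves a system small enough to be solved by hand.
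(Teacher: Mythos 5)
Your overall architecture matches the paper's: remove the $\ad(\h)$-component of $K$ and reduce to showing $K=0$ and $\Phi=0$; split by $\rk M_0$; descend to totally geodesic Lie triple subsystems for the rank-two reductions; and finish with explicit case-by-case work in ranks one and two, with $SU(3)/SO(3)$ singled out via the extra hypothesis $\Phi=0$. However, two of the intermediate steps are misdescribed in a way that leaves real gaps.

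First, Lemma~\ref{l:Amh} is not obtained by projecting \eqref{eq:biadphi} onto $\ad(\h)$ and its complement and then contracting with Schur's lemma. It is a free-standing rigidity statement: for $\rk M_0\ge 2$ (and for $\Oc P^2$), any $A\in\Hom(\m,\h)$ with $\sigma_{XYZ}\<[X,Y],AZ\>=0$ equals $\ad_T$ for some $T\in\m$. Its proof is a three-step normalization in the restricted-root decomposition (Section~\ref{s:rr}), together with a separate $\mathrm{Spin}(9)$-module computation for the Cayley plane. It is applied not to $\mA$ directly but (i) to the adjoint of the map $A$ furnished by Lemma~\ref{l:rkge3}\eqref{it:A[X,Y]}, giving $\Phi(X,Y)=[T,[X,Y]]$ and then $T=0$ via the trace identity of Lemma~\ref{l:trace}\eqref{it:ltrace1}, and (ii) in Lemma~\ref{l:trace}\eqref{it:ltrace3} to the map $Z\mapsto[K_ZX_1,X_2]-[K_ZX_2,X_1]$.

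Second, and more seriously, your rank-$\ge 3$ plan skips Lemma~\ref{l:rkge3}\eqref{it:A[X,Y]} altogether. Substituting $X,Y,Z$ from a Cartan subspace (where $\ad_{[X,Y]}=0$) does give $\Phi(X,Y)=0$ for commuting $X,Y$, but that alone does not force $\Phi=0$; one needs the factoring $\Phi(X,Y)=A[X,Y]$ through the bracket, and this is where the bulk of the work lies — an $\h$-module argument showing that every element of $\Ker\db\subset\so(\m)$ (where $\db(X\wedge Y)=[X,Y]$) is a sum of decomposable bivectors $X\wedge Y$ with $[X,Y]=0$, handled separately for reduced systems and for type $\mathrm{BC}$. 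Likewise, the passage from $\Phi=0$ to $\mA=0$ is not a matter of pairing with restricted-root vectors: after projecting \eqref{eq:biadphi} onto $\ad(\h)$ and $\ad(\h)^\perp$ to get \eqref{eq:twopieces}, one still needs Lemma~\ref{l:Amh} to produce the bilinear map $T$, then Szab\'o's structure theorem \cite{Sza} to show the resulting $F_X$ is either zero or proportional to a complex structure $J$, and a trace computation to kill the $J$-possibility. As written, your proposal assumes these steps are routine; they are the core of Sections~\ref{s:irred} and \ref{s:rr}.
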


\begin{remark}\label{r:Kperpad}
Equation \eqref{eq:biadphi} is easily seen to be satisfied if $\Phi=0$ and $K_Z \in \ad(\h)$, for all $Z$. It follows that for any solution $(K,\Phi)$ of (\ref{eq:biadphi}, \ref{eq:cyclePhi}), $(\pi_{\ad(\h)^\perp}K, \Phi)$ is also a solution, where $\ad(\h)^\perp$ is the orthogonal complement to
$\ad(\h) \subset \so(\m)$. We can therefore assume for the rest of the proof that
\begin{equation}\label{eq:Kperpadh}
K_X \perp \ad(\h), \quad \text{for all} \quad X \in \m.
\end{equation}
and will be proving that the assumptions of Proposition~\ref{p:irre} together with \eqref{eq:Kperpadh} imply $\Phi=0$ and $K=0$.
\end{remark}

\begin{remark}\label{r:tg}
Equations (\ref{eq:cyclePhi}, \ref{eq:biadphi}) descend to an arbitrary Lie triple subsystem $\m' \subset \m$. Indeed, defining $K' \in \Hom(\m', \so(\m'))$ and $\Phi' \in \Hom(\Lambda^2\m', \m')$ by $K'_XY=\pi_{\m'}K_XY, \; \Phi'(X,Y)=\pi_{\m'}\Phi(X,Y)$, for $X, Y \in \m'$, where $\pi_{\m'}$ is the projection to $\m'$, we obtain that \eqref{eq:cyclePhi}, with $\Phi$ replaced by $\Phi'$ is trivially satisfied for all $X,Y,Z \in \m'$. Moreover, equation~\eqref{eq:biadphi} is equivalent to the fact that $\sigma_{XYZ}([-\<[[X, Y],V],K_ZU\>+\<[[X, Y],U],K_ZV\>+\<[[V,U], Y],K_ZX\>-\<[[V,U], X],K_ZY\>+ \<\Phi(X,Y),U\>\<Z, V\>-\<\Phi(X,Y),V\>\<Z, U\>)=0$, for all $X,Y,Z,U,V \in \m$. Taking all the vectors $X,Y,Z,U,V$ from $\m'$ and using the fact that $\m'$ is a Lie triple system, we obtain the same equation, with $K$ and $\Phi$ replaced by $K'$ and $\Phi'$ respectively.

Although the condition \eqref{eq:Kperpadh} may not always be satisfied for $K'$, we will be using the above observation as follows. If for
``sufficiently many" Lie triple subsystems $\m' \subset \m$ Proposition~\ref{p:irre} is satisfied, then $\<\Phi(X,Y),Z\>=0$ for sufficiently many
triples $X,Y,Z \in \m$ to imply $\Phi=0$.
\end{remark}

The proof of Proposition~\ref{p:irre} is based on the following technical facts:

\begin{lemma} \label{l:Amh}
Let $M_0$ be a compact irreducible symmetric space. Suppose that either $\rk M_0 \ge 2$ or $M_0=\Oc P^2$. Let a linear map $A: \m \to \h$ satisfy
\begin{equation*}
    \sigma_{XYZ}\<[X, Y],AZ\>=0,
\end{equation*}
for all $X, Y, Z \in \m$. Then there exists $T \in \m$ such that $A=\ad_T$.
\end{lemma}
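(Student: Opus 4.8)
The plan is to prove that the solution space
$\mathcal A=\{A\in\Hom(\m,\h):\sigma_{XYZ}\<[X,Y],AZ\>=0\ \text{for all }X,Y,Z\in\m\}$,
which is a vector space and in fact an $H$-submodule of $\Hom(\m,\h)$, coincides with $\ad(\m):=\{\ad_T|_\m:T\in\m\}$. That $\ad(\m)\subseteq\mathcal A$ is immediate from the first Bianchi identity: for $AZ=[T,Z]$, invariance of the metric gives $\sigma_{XYZ}\<[X,Y],[T,Z]\>=\sigma_{XYZ}\<[[X,Y],T],Z\>=-\<T,\sigma_{XYZ}[[X,Y],Z]\>=\<T,\sigma_{XYZ}R_0(X,Y)Z\>=0$. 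The map $T\mapsto\ad_T|_\m$ is injective, since $\ad_T|_\m=0$ would force $R_0(T,\cdot)=0$, contradicting the absence of flat factors (which holds whenever $\rk M_0\ge 2$ or $M_0=\Oc P^2$). So what remains is the reverse inclusion $\mathcal A\subseteq\ad(\m)$.

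Second, I would reduce to a trace-free problem. Equip $\h$ with the restriction of a suitable multiple of the Killing form of $\g$, compatible with the metric on $\m$, and define $\tau:\Hom(\m,\h)\to\m$ by $\tau(A)=\sum_i[Ae_i,e_i]$ for an orthonormal basis $(e_i)$ of $\m$. Then $\tau(\ad_T|_\m)=\sum_i[[T,e_i],e_i]=\sum_iR_0(e_i,T)e_i$, which is a nonzero multiple of $T$ since $M_0$ is irreducible, hence Einstein, and non-flat; moreover, by invariance, $\<\tau(A),T\>$ is proportional to $\sum_i\<Ae_i,\ad_Te_i\>$. Hence $\tau$ restricts to an isomorphism $\ad(\m)\to\m$, one has the orthogonal splitting $\mathcal A=\ad(\m)\oplus\mathcal A_0$ with $\mathcal A_0:=\mathcal A\cap\ker\tau$, and it suffices to show $\mathcal A_0=0$. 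Using invariance once more, the defining equation rewrites as $\sigma_{XYZ}\<K_ZX,Y\>=0$ with $K_Z:=\ad_{AZ}|_\m\in\ad(\h)$; equivalently, the trilinear form $(Z,X,Y)\mapsto\<K_ZX,Y\>$, which is skew in $(X,Y)$, has vanishing totally skew part, and for $A\in\mathcal A_0$ it has in addition all of its traces zero.

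The decisive step is to show $\mathcal A_0=0$, and this is where the hypothesis $\rk M_0\ge 2$ or $M_0=\Oc P^2$ is used; a separate analysis of the rank-one spaces $\bc P^m$ and $\mathbb{H}P^m$ is carried out elsewhere in the paper, so the argument here must make essential use of this restriction rather than being a statement about a single tangent direction. For $\rk M_0\ge 2$ I would work with the restricted root space decomposition of $\m$ relative to a maximal abelian subspace $\ag\subset\m$ with $\dim\ag\ge 2$: when $X,Y$ lie in a common flat, $[X,Y]=0$ and hence $R_0(X,Y)=0$, and feeding such pairs together with root vectors into $\sigma_{XYZ}\<K_ZX,Y\>=0$, combined with the vanishing traces of $A\in\mathcal A_0$, should pin down $K$ on all of $\m$ and force $K=0$; concretely this can be organised by descending to the Lie triple subsystems spanned by $\ag$ and small collections of root spaces (in the spirit of Remark~\ref{r:tg}) and handling the finitely many low-dimensional ``atoms'' directly. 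For $M_0=\Oc P^2=F_4/\mathrm{Spin}(9)$, with $\m\cong\Rs$ the real spin representation and $\h\cong\so(9)$, the equality $\mathcal A_0=0$ is an explicit finite computation: one decomposes $\Hom(\m,\h)$ as a $\mathrm{Spin}(9)$-module and checks that the joint kernel of the cyclic map and the trace is $16$-dimensional, hence equals $\ad(\m)$.

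The main obstacle is this last step in the higher-rank case: one needs an argument uniform over all (infinitely many) irreducible types of rank $\ge 2$, which essentially forces one to argue through the combinatorics of the restricted root system rather than through the abstract $H$-module structure, and then to dispatch separately the few small exceptional cases (among them $SU(3)/SO(3)$, and $\Oc P^2$ itself).
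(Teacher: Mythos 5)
Your write-up correctly handles the preliminaries (the inclusion $\ad(\m)\subseteq\mathcal{A}$ via the Jacobi identity, injectivity of $T\mapsto\ad_T|_{\m}$, and the trace-splitting $\mathcal{A}=\ad(\m)\oplus\mathcal{A}_0$), and your high-level diagnosis of what must be done (restricted-root combinatorics for $\rk M_0\ge2$, $\mathrm{Spin}(9)$ representation theory for $\Oc P^2$) matches the paper's strategy in spirit. But the core of the lemma --- showing $\mathcal{A}_0=0$, or equivalently $\mathcal{A}=\ad(\m)$ --- is not actually proved; you flag it explicitly as ``the main obstacle'' and replace it with a description of what an argument \emph{should} do. That is precisely the nontrivial content of the lemma, so this is a genuine gap, not a detail.

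Concretely, the paper does not use your trace-splitting at all. For $\rk M_0\ge2$ it normalizes $A$ by subtracting a suitable $\ad_{T'}$ so that the modified map $A'$ satisfies $A'\ag\subset\h_0$ and $A'\m_\a\subset\bigoplus_{\b\parallel\a}\h_\b$ (Step~1), then proves via an analysis of the rank-one Lie triple system $\tilde\m_\a=\m_\a\oplus\br\a^*$ and an $\so(\tilde\m_\a)$-isomorphism that $A'\a^*=0$ and $A'|_{\m_\a}=c_\a\theta_\a$ for each root $\a$ with no proper multiple (Step~2), and finally shows the function $\a\mapsto c_\a$ is the restriction of a single linear form (Step~3), treating the reduced case by induction on root height and handling the non-reduced $\mathrm{BC}_r$ case separately, with extra work isolating the symmetric operators $S_i$ on $\m_{\omega_i}$ and killing them using Lemma~\ref{l:rr} and multiplicity inequalities (including a direct bracket computation inside $SU(p+q)/S(U(p)\times U(q))$). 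None of this is captured by your suggestion to ``handle the finitely many low-dimensional atoms directly''; the argument in the paper is uniform and combinatorial in the root system, not a reduction to a finite list of cases. Likewise, for $\Oc P^2$, your claim that one simply ``decomposes $\Hom(\m,\h)$ as a $\mathrm{Spin}(9)$-module and checks the joint kernel is $16$-dimensional'' conceals the actual work: the paper must identify $\Ker\Xi$ for an explicit $\h$-module map $\Xi:\Lambda^2\br^9\otimes\m\to\Lambda^3\m$ using Slupinski's spinor-valued-form decomposition, exhibiting explicit elements of the two non-$\m$ summands $\Theta_1(P_1)$ and $P_2$ with nonzero image of $\Xi$. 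In short, your proposal is a correct outline of the problem and a reasonable sketch of where to look, but it stops exactly where the proof begins.
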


\begin{lemma} \label{l:rkge3}
In the assumptions of Proposition~\ref{p:irre}, suppose that $\rk M_0 \ge 3$. Then
\begin{enumerate}[\rm 1.]
  \item \label{it:phi0}
  $\Phi(X, Y)=0$, for all $X, Y \in \m$ such that $[X, Y]=0$.
  \item \label{it:A[X,Y]}
  There exists a linear map $A:\h \to \m$ such that $\Phi(X, Y)=A[X, Y]$, for all $X, Y \in \m$.
\end{enumerate}
\end{lemma}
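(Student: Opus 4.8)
The plan is to prove the two assertions in turn, in both cases exploiting that equations \eqref{eq:cyclePhi} and \eqref{eq:biadphi} descend to every Lie triple subsystem $\m'\subset\m$ (Remark~\ref{r:tg}), and that when $\rk M_0\ge 3$ a pair of commuting vectors sits inside an abelian subspace with room to enlarge.

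\textbf{Assertion 1 (the flat/trace step).} If $[X,Y]=0$ then $\Span(X,Y)$ is an abelian subspace of $\m$, hence contained in a maximal one $\ag$, $\dim\ag=\rk M_0\ge 3$. Any abelian subspace $\mathfrak b\subset\m$ is a Lie triple subsystem on which every bracket vanishes, so by Remark~\ref{r:tg} the restriction of \eqref{eq:biadphi} to $\mathfrak b$ collapses to $\sigma_{UVW}\bigl(\Phi'(U,V)\wedge W\bigr)=0$ for $U,V,W\in\mathfrak b$, where $\Phi'=\pi_{\mathfrak b}\circ\Phi|_{\Lambda^2\mathfrak b}$ (this is exactly \eqref{eq:biadphi} for flat $\mathbb R^{\dim\mathfrak b}$). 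Tracing this identity once over an orthonormal basis of $\mathfrak b$ yields $(\dim\mathfrak b-3)\,\Phi'(U,V)=\tau(V)U-\tau(U)V$ with $\tau(V)=\sum_i\langle\Phi'(V,e_i),e_i\rangle$, and tracing again gives $(\dim\mathfrak b-2)\,\tau=0$, hence $\tau=0$; so $\Phi'=0$ whenever $\dim\mathfrak b\ge 4$. Thus, if $\rk M_0\ge 4$ one takes $\mathfrak b=\ag$ and gets $\pi_{\ag}\Phi(X,Y)=0$; and when a restricted root $\lambda$ vanishes on $X$ and on $Y$, the subspace $\ker(\lambda|_\ag)\oplus\ell$ is again maximal abelian for every line $\ell\subset\m_\lambda$, which likewise forces $\pi_{\m_\lambda}\Phi(X,Y)=0$.

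\textbf{The main obstacle.} What is left of Assertion 1 — the components of $\Phi(X,Y)$ transverse to a maximal flat, and the whole statement when $\rk M_0=3$ (where the maximal flat is only $3$-dimensional and the contraction above yields only $\tau'=0$, not $\Phi'=0$) — is where the real work lies, and I expect this to be the hard part. Using the restricted root space decomposition $\m=\ag\oplus\bigoplus_\lambda\m_\lambda$, one reduces by bilinearity and the step above to showing $\pi_{\m_\lambda}\Phi(H',H_\lambda)=0$ for each root $\lambda$, with $H_\lambda$ the coroot and $H'\in\ker(\lambda|_\ag)$. For this I would restrict \eqref{eq:biadphi} (via Remark~\ref{r:tg}) to the Lie triple subsystem $\m'=\ag\oplus\m_\lambda\oplus\m_{2\lambda}$ — a product of a flat with a rank-one symmetric space, in which the flat directions contribute nothing to any bracket — and run a Proposition~\ref{p:red}-style analysis, feeding in the vanishing of $\Phi$ on the flat factor already obtained; alternatively one can contract the full identity \eqref{eq:biadphi} (equivalently, combine the contracted second Bianchi identity with \eqref{eq:gnablaW}) to get, for $[X,Y]=0$, a relation of the shape $(n-3)\langle\Phi(X,Y),Z\rangle=\sum_i\langle\ad_{[K_{e_i}X,Y]-[K_{e_i}Y,X]}Z,e_i\rangle$ (with $e_i$ an orthonormal basis of $\m$) and then kill the right-hand side using \eqref{eq:Kperpadh} and the root-space structure. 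Flat restrictions see only $\pi_\ag\Phi$, so the transverse part genuinely requires the non-flat structure and careful bookkeeping of the restricted root system; this is precisely why $\rk M_0\ge 3$ (rather than merely $\ge 2$) is needed.

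\textbf{Assertion 2.} Since $M_0$ is irreducible, $[\m,\m]=\h$, so the bracket $\beta\colon\Lambda^2\m\to\h$, $X\wedge Y\mapsto[X,Y]$, is onto, and under $\Lambda^2\m\cong\so(\m)$ its kernel is $\ad(\h)^\perp$. I would then show that for $\rk M_0\ge 3$ the decomposable commuting bivectors $\{X\wedge Y:[X,Y]=0\}$ span $\ker\beta$: if $T\in\so(\m)$ is orthogonal to all of them, then $\langle TH,W\rangle=0$ whenever $H,W$ lie in a common maximal flat, and running $H$ over regular and then over singular (wall) elements, together with a root-system argument valid in rank $\ge 3$, forces $T\in\ad(\h)$, i.e.\ $T\perp\ker\beta$. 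Combined with Assertion 1 this gives $\Phi|_{\ker\beta}=0$, so $\Phi$ descends through $\beta$: there is a unique linear $A\colon\h\to\m$ with $\Phi(X,Y)=A[X,Y]$. (Feeding $A$ into Lemma~\ref{l:Amh} via its adjoint $A^\ast\colon\m\to\h$, for which \eqref{eq:cyclePhi} is exactly the hypothesis of that lemma, one even obtains $A=-\ad_T$ for some $T\in\m$, i.e.\ $\Phi(X,Y)=R_0(X,Y)T$; this sharpening is not needed here.)
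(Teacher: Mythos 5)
Your overall strategy lines up with the paper's in outline, but there is a genuine structural gap in Assertion~1 that your restriction-to-subsystems approach cannot close, and both assertions leave the actual root-system arguments (the bulk of the paper's proof) as a sketch.

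\textbf{Assertion 1.} The paper does \emph{not} restrict equation \eqref{eq:biadphi} to the abelian subsystem. Instead it substitutes $X,Y,Z\in\ag$ into the full equation on $\m$, obtaining
\[
\ad_U=-\sigma_{XYZ}(\Phi(X,Y)\wedge Z),\qquad U=\sigma_{XYZ}[K_XY-K_YX,Z]\in[\ag,\m]\subset\h,
\]
as an operator identity on the \emph{whole} of $\m$. The nontrivial step is then to kill $U$: for $\rk\ge 4$ by a quick argument with a regular element outside a $3$-plane of $\ag$, and for $\rk=3$ by a case-by-case run through the root systems $\mathrm A_3,\mathrm B_3,\mathrm C_3,\mathrm D_3,\mathrm{BC}_3$. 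Once $U=0$, the operator identity $\sigma_{XYZ}(\Phi(X,Y)\wedge Z)=0$ on $\m$ (not merely on $\ag$) immediately forces $\Phi(X,Y)\in\ag$ for $X,Y\in\ag$: pair against $W\perp\ag$. Your restricted version only ever sees $\pi_{\mathfrak b}\Phi$, which is exactly why you lose the transverse components — and, as you correctly observe, in rank $3$ even $\pi_\ag\Phi=0$ does not follow from tracing $\sigma(\Phi'\wedge Z)=0$ on a $3$-dimensional abelian space. The maneuver with $\ker\lambda\oplus\ell$ recovers only $\pi_{\m_\lambda}\Phi(X,Y)$ for $X,Y\in\ker\lambda$ and does not cover general commuting pairs. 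So your Assertion~1 proof is incomplete in a way that isn't just ``details left to the reader'': the restriction strategy is structurally too weak, and one must (as the paper does) exploit the full equation, then in rank $3$ classify $\Phi|_{\Lambda^2\ag}$ as $S^\ag(X\times Y)$ and use the constraint $\langle S^\ag\alpha^*,\alpha^*\rangle=0$ over all roots $\alpha$.

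\textbf{Assertion 2.} The reformulation is correct and matches the paper: with $\db(X\wedge Y)=[X,Y]$, one has $\ad(\h)^\perp=\Ker\db=\mathcal M$, and the claim is that the decomposable commuting bivectors span $\mathcal M$. But the sentence ``running $H$ over regular and then over singular elements, together with a root-system argument valid in rank $\ge 3$, forces $T\in\ad(\h)$'' is precisely the hard part, and it is left unproven. In the paper this takes the better part of two pages, split into Facts 1--3 (reduction to single root-space brackets, the $H\wedge X$ case, and the observation that $\mathcal M$ has no trivial $\h$-submodule so it suffices to handle $[\ad_{\h_\alpha},\mathcal M]$), followed by a separate treatment of reduced versus $\mathrm{BC}_r$ root systems. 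In particular, for $\mathrm{BC}_r$ there is delicate bookkeeping to push terms $X\wedge Y$ with $X,Y\in\m_{\omega_i}$ or $X,Y\in\m_{2\omega_i}$ into $\mathcal D$. Your sketch names none of these steps, so it is a plan rather than a proof.

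\textbf{Summary.} Your Assertion~1 approach (restriction to abelian subsystems) is weaker than what is needed; the paper's use of the full unreduced equation is essential both for the transverse components and for rank~$3$. Your Assertion~2 plan is the right one, but the root-theoretic content — which is what actually makes the lemma true and what uses $\rk\ge 3$ — is missing.
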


The proofs of Lemma~\ref{l:Amh} and Lemma~\ref{l:rkge3} will be given in Section~\ref{s:rr}.

\begin{lemma}\label{l:rk2}
Suppose that $M_0$ is a compact irreducible symmetric space of rank two other than $SU(3)/SO(3)$. In the assumptions of Proposition~\ref{p:irre}, $\Phi=0$.
\end{lemma}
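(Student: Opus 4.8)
The plan is to prove the pointwise statement $\<\Phi(X,Y),Z\>=0$ for all $X,Y,Z\in\m$, organised throughout by the restricted-root structure of $M_0$. Since $\rk M_0=2$ and $M_0\neq SU(3)/SO(3)$, the restricted root system $\Delta$ of $M_0$ is one of $A_2$ with all root multiplicities $\ge 2$ (realised by $SU(3)$, $SU(6)/Sp(3)$ and $E_6/F_4$), $B_2=C_2$, $BC_2$, or $G_2$. Fix a maximal abelian $\ag\subset\m$ together with the decomposition $\m=\ag\oplus\bigoplus_{\alpha\in\Delta^+}\m_\alpha$. Two ingredients drive the argument. First, Remark~\ref{r:tg}: the equations descend to any Lie triple subsystem $\m'\subset\m$, so whenever Proposition~\ref{p:irre} is already available for the totally geodesic submanifold with tangent space $\m'$ we obtain $\pi_{\m'}\Phi=0$; this applies both to smaller rank-two totally geodesic submanifolds (so that the Grassmannian and $A_2$ families reduce to a handful of minimal spaces) and, via the rank-one case of Section~\ref{s:rk1}, to totally geodesic $\Oc P^2$, $\mathbb H P^2$ and $\bc P^{k}$ with $k\ge 3$. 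Second, a direct manipulation of \eqref{eq:biadphi} evaluated on root vectors, together with the cyclic identity \eqref{eq:cyclePhi}.

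I would proceed in three steps. \textbf{Step 1:} establish the rank-two analogue of Lemma~\ref{l:rkge3}\eqref{it:phi0}, that $\Phi(X,Y)=0$ whenever $[X,Y]=0$. For $\rk M_0\ge 3$ this relied on the abundance of commuting triples inside a flat; with a two-dimensional flat one instead combines \eqref{eq:biadphi} evaluated with $X,Y\in\ag$ and $Z$ running through the root spaces with the constraints supplied by Remark~\ref{r:tg} on the totally geodesic submanifolds above. \textbf{Step 2:} polarise Step 1 and use $\h=[\m,\m]$ and the $\h$-module structure of $\m$ (dictated by $\Delta$) to deduce the analogue of Lemma~\ref{l:rkge3}\eqref{it:A[X,Y]}, namely $\Phi(X,Y)=A[X,Y]$ for some linear $A\colon\h\to\m$. \textbf{Step 3:} the cyclic identity \eqref{eq:cyclePhi} now reads $\sigma_{XYZ}\<[X,Y],A^{\top}Z\>=0$, so Lemma~\ref{l:Amh} (valid since $\rk M_0\ge 2$) gives $A^{\top}=\ad_{T}$ for a single $T\in\m$, i.e. $\Phi(X,Y)=-R_0(X,Y)T$. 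One checks directly that $K_Z\mapsto\ad_{[Z,T]}$ together with this $\Phi$ is a solution of \eqref{eq:biadphi}, and subtracting it (it lies in $\ad(\h)$, hence does not affect the normalisation \eqref{eq:Kperpadh}) leaves $\sigma_{XYZ}\big((R_0(X,Y)T)\wedge Z\big)=0$, which for an irreducible $M_0$ of non-constant curvature forces $T=0$. Hence $\Phi=0$.

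The step I expect to be the main obstacle is Step 1. The ``soft'' dimension count available in rank $\ge 3$ collapses for a two-dimensional flat, so one is forced into a root-system-by-root-system discussion and, for the minimal spaces admitting no useful proper totally geodesic submanifold of non-constant curvature — roughly $Sp(2)/U(2)$, $G_2/SO(4)$, $SU(3)$ and the smallest complex Grassmannian $SU(4)/S(U(2)\times U(2))$ — into an explicit computation with \eqref{eq:biadphi} on the root vectors. A secondary difficulty is that the complex Grassmannians contain, among their large rank-one totally geodesic submanifolds, only copies of $\bc P^2$, for which Proposition~\ref{p:irre} requires the extra hypothesis $\Phi=0$; this is circumvented by first showing, using the directions transverse to such a $\bc P^2$, that the induced $\Phi'$ vanishes on an open dense subset of directions, after which the dimension-four instance of Remark~\ref{r:tg} applies. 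The reason $SU(3)/SO(3)$ must be excluded is exactly that none of these mechanisms is available for it — its restricted root system is $A_2$ with all multiplicities $1$, and it contains no proper totally geodesic submanifold of non-constant curvature — in accordance with the failure of Proposition~\ref{p:main} for it exhibited in Section~\ref{s:su3so3}.
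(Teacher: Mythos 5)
Your proposal follows a genuinely different route from the paper: you propose to extend the rank-$\ge 3$ machinery (Lemma~\ref{l:rkge3} giving $\Phi(X,Y)=A[X,Y]$, then Lemma~\ref{l:Amh} giving $A=\ad_T$, then a trace-type argument forcing $T=0$) down to rank two. The paper instead reduces, via Remark~\ref{r:tg} and a careful choice of totally geodesic submanifolds of maximal rank containing a given $\ag_Y=\ag\oplus\br Y$, to the two minimal spaces $SO(5)/(SO(3)\times SO(2))$ and $G_2/SO(4)$ (plus the groups $SU(3)$, $Sp(2)$, $G_2$ handled by reference to \cite{Ndga}), and then finishes both base cases by explicit root-vector computations with \eqref{eq:biadphi} rather than by invoking any analogue of Lemma~\ref{l:rkge3}.

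The genuine gap is exactly where you locate it, in Step~1. The rank-$\ge 3$ proof of Lemma~\ref{l:rkge3}\eqref{it:phi0} begins by substituting three linearly independent $X,Y,Z\in\ag$ into \eqref{eq:biadphi}; when $\dim\ag=2$ this produces only the trivial identity and the subsequent analysis of the element $U=\sigma_{XYZ}[K_XY-K_YX,Z]$ has no content. Your replacement — taking $X,Y\in\ag$ and $Z$ in a root space, combined with unspecified constraints from totally geodesic submanifolds — is a plan, not a proof, and you say as much. Note that the paper does prove the Step~1 statement as an intermediate conclusion inside the two base-case computations (for $G_2/SO(4)$ this is \eqref{eq:g2so4comm}; for $SO(5)/(SO(3)\times SO(2))$ it is the vanishing of $\<\Phi(X,Y),Y\>$ on rank-one matrices $Y$), but it arrives there by ad hoc manipulations and then continues with further explicit identities rather than routing through a Step~2/Step~3 pipeline. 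So what is missing from your plan is precisely the content that the paper supplies by direct calculation.

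A secondary issue is in your Step~3. The assertion that $K_Z\mapsto\ad_{[Z,T]}$ together with $\Phi(X,Y)=-R_0(X,Y)T$ is a solution of \eqref{eq:biadphi} is not checked and, even granting it, the ``subtraction'' does not obviously leave $\sigma_{XYZ}\bigl((R_0(X,Y)T)\wedge Z\bigr)=0$; signs do not cancel the way you claim. The clean way to finish, and what the paper itself does in Lemma~\ref{l:trace}\eqref{it:ltrace2}, is to apply the trace identity of Lemma~\ref{l:trace}\eqref{it:ltrace1}: from $\Phi(X,Y)=[T,[X,Y]]$ one gets $0=\sum_i\<[T,[X,e_i]],e_i\>=\tfrac12\<T,X\>$ for all $X$, hence $T=0$. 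I suggest replacing your Step~3 by this. Also, your remark that the Grassmannians contain ``only copies of $\bc P^2$'' among useful totally geodesic submanifolds misses the paper's actual mechanism: one uses rank-\emph{two} totally geodesic submanifolds of maximal rank ($SU(5)/S(U(3)\times U(2))$, $SO(6)/(SO(4)\times SO(2))$, $SO(5)/(SO(3)\times SO(2))$, $SU(3)$), chosen after rotating by the isotropy group of $\ag$ so that they contain the prescribed root vector $Y$; no detour through $\bc P^2$ is needed.
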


Lemma~\ref{l:rk2} is proved in Section~\ref{s:rk2}.

The proof of Proposition~\ref{p:irre} for spaces $M_0$ of rank greater than one and for the Cayley projective plane is now completed by Lemma~\ref{l:rk2} and assertions~\ref{it:ltrace2} and \ref{it:ltrace3} of the following lemma.

\begin{lemma}\label{l:trace}
In the assumptions of Proposition~\ref{p:irre}, we have
\begin{enumerate}[\rm 1.]
  \item \label{it:ltrace1}
  $\sum_{i=1}^n \<\Phi(X,e_i),e_i\>=0$, for any $X \in \m$, where $e_i, \; i = 1, \dots, n$, is an orthonormal basis for $\m$.
  \item \label{it:ltrace2}
  If $\rk M_0 \ge 3$, then $\Phi=0$.
  \item \label{it:ltrace3}
  If $\rk M_0 \ge 2$ or $M_0=\Oc P^2$, then $K \in \Hom(\h, \ad(\h))$.
\end{enumerate}
\end{lemma}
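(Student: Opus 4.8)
Looking at this lemma, I need to prove three things:
1. The trace of $\Phi(X,\cdot)$ vanishes (part 1)
2. If $\rk M_0 \ge 3$ then $\Phi = 0$ (part 2)
3. If $\rk M_0 \ge 2$ or $M_0 = \Oc P^2$ then $K \in \Hom(\h, \ad(\h))$ (part 3)

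Let me sketch this.

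\emph{Strategy.} All three assertions are extracted from the Bianchi-type identity \eqref{eq:biadphi} by contraction, combined with the cyclic identity \eqref{eq:cyclePhi} and, for the last two, with Lemmas~\ref{l:rkge3} and \ref{l:Amh}. Throughout fix an orthonormal basis $e_i$ of $\m$ and write $\phi(X)=\sum_i\langle\Phi(X,e_i),e_i\rangle$, so that assertion~\ref{it:ltrace1} is the statement $\phi\equiv0$. For assertion~\ref{it:ltrace1} the plan is to contract \eqref{eq:biadphi} twice. First, specialise the third slot of the operator identity \eqref{eq:biadphi} to $e_i$, act on $e_i$, and sum over $i$. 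A direct computation collapses the $\Phi$-terms to $-(n-3)\Phi(X,Y)+\phi(Y)X-\phi(X)Y$, while the $K$-terms reorganise, via the pair symmetries of $R_0$ and the Einstein relation $\sum_i R_0(X,e_i)e_i=c\,\id_\m$ with $c\ne0$ (valid since a compact irreducible symmetric space is Einstein with positive Einstein constant), into a vector $\mathcal K(X,Y)$ built from $K$ and the fixed vector $\sum_i K_{e_i}e_i$; thus $-(n-3)\Phi(X,Y)+\phi(Y)X-\phi(X)Y+\mathcal K(X,Y)=0$. Pairing this with $X$ and summing over $X=e_j$, and using the curvature symmetries and the Einstein property once more, all $K$-dependent contributions cancel — the abstract shadow of the Weyl tensor being totally trace-free — leaving a nonzero multiple of $\phi(Y)$, whence $\phi\equiv0$. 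This is a bookkeeping argument whose only subtle point is that last cancellation.

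For assertion~\ref{it:ltrace2}, assume $\rk M_0\ge3$. By Lemma~\ref{l:rkge3} there is a linear map $A:\h\to\m$ with $\Phi(X,Y)=A[X,Y]$, and \eqref{eq:cyclePhi} says that the trilinear form $(X,Y,Z)\mapsto\langle A[X,Y],Z\rangle$ — which one checks is in fact totally antisymmetric — vanishes identically, so $A$ lies in the kernel of the natural map $\Hom(\h,\m)\to\Lambda^3\m^*$. To force $A=0$ I would substitute $\Phi(X,Y)=A[X,Y]$ back into \eqref{eq:biadphi} and exploit the abundance of commuting configurations available in high rank: for $X,Y$ in a maximal abelian subspace $\ag\subset\m$ one has $[X,Y]=0$, and \eqref{eq:cyclePhi} then gives $\langle A[Y,Z],X\rangle=\langle A[X,Z],Y\rangle$ for all $Z$; running $Z$ through a basis adapted to the restricted-root decomposition and feeding in assertion~\ref{it:ltrace1} (the map $Y\mapsto A[X,Y]$ is traceless) pins $A$ down on all of $[\m,\m]=\h$. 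Equivalently, the reinserted \eqref{eq:biadphi} should force $A$ to intertwine the isotropy action on $\h$ with that on $\m$, after which $A=0$ because for an irreducible symmetric space of rank $\ge3$ the isotropy module $\m$ is not a submodule of the adjoint module $\h$.

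For assertion~\ref{it:ltrace3}, normalise $K$ as in Remark~\ref{r:Kperpad} so that $K_X\perp\ad(\h)$ for every $X$; it then suffices to prove $K=0$. Decompose \eqref{eq:biadphi} along the $\ad(\h)$-invariant splitting $\so(\m)=\ad(\h)\oplus\ad(\h)^\perp$: the $\ad(\h)$-component reads $\sigma_{XYZ}\bigl(\ad_{[K_XY-K_YX,Z]}+\pi_{\ad(\h)}(\Phi(X,Y)\wedge Z)\bigr)=0$. Pairing this against $\ad_{[U,V]}$, tracing over $\so(\m)$, and using assertion~\ref{it:ltrace1} together with \eqref{eq:cyclePhi} to discard the $\Phi$-contribution, one should arrive at an identity of the form $\sigma_{XYZ}\langle[X,Y],A'Z\rangle=0$ for a linear map $A':\m\to\h$ manufactured from $K$. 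Since $\rk M_0\ge2$ or $M_0=\Oc P^2$, Lemma~\ref{l:Amh} applies and gives $A'=\ad_{T}$ for some $T\in\m$; reinserting this and using the skew-symmetry of the $K_Z$ and the normalisation $K_X\perp\ad(\h)$ forces $T=0$ and then $K=0$, i.e. $K\in\Hom(\m,\ad(\h))$. Finally, once $K=0$ equation \eqref{eq:biadphi} reduces to $\sigma_{XYZ}(\Phi(X,Y)\wedge Z)=0$, and a short argument using \eqref{eq:cyclePhi} and $n>3$ yields $\Phi=0$ as well; this also settles the Cayley plane, the only space in the list for which $\Phi=0$ was not already available from assertion~\ref{it:ltrace2} or Lemma~\ref{l:rk2} (for $SU(3)/SO(3)$, $\Phi=0$ is among the hypotheses of Proposition~\ref{p:irre}). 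The step I expect to be the main obstacle is producing the map $A'$ in exactly the form demanded by Lemma~\ref{l:Amh} — i.e. the precise projection-and-tracing of \eqref{eq:biadphi} — with the elimination of $A$ in assertion~\ref{it:ltrace2} a close second.
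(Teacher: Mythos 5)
Your plan for assertion~\ref{it:ltrace1} agrees with the paper's: contract \eqref{eq:biadphi} by setting $Z=e_i$, acting on $e_i$, summing over $i$, then contracting once more against $Y=e_j$; the $\Phi$-contribution is exactly $-(n-3)\Phi(X,Y)+\phi(Y)X-\phi(X)Y$ as you compute, and the $K$-terms vanish by the two standard trace identities $\sum_i[[X,e_i],e_i]=\tfrac12 X$ and $\sum_i([[X,Ke_i],e_i]+[[X,e_i],Ke_i])=0$. So far so good.

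The arguments you sketch for assertions~\ref{it:ltrace2} and~\ref{it:ltrace3}, however, both contain genuine gaps. For~\ref{it:ltrace2}: the statement that the trilinear form $(X,Y,Z)\mapsto\langle A[X,Y],Z\rangle$ ``is in fact totally antisymmetric'' is false as stated; a form skew in $(X,Y)$ with vanishing cyclic sum need not be alternating (e.g.\ $T(X,Y,Z)=\langle X,Z\rangle f(Y)-\langle Y,Z\rangle f(X)$). More seriously, your fallback argument that ``$A$ must intertwine the isotropy action, after which $A=0$ because $\m$ is not a submodule of $\h$'' fails for compact Lie groups: when $M_0$ is a simple compact group of rank $\ge3$ (e.g.\ $SU(4)$), the isotropy module $\m$ and the adjoint module $\h$ are isomorphic, so that route is blocked. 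The paper instead recognizes that \eqref{eq:cyclePhi} rewritten as $\sigma_{XYZ}\langle[X,Y],A^*Z\rangle=0$ puts $A^*:\m\to\h$ exactly into the hypotheses of Lemma~\ref{l:Amh}, giving $A^*=\ad_T$ and hence $\Phi(X,Y)=[T,[X,Y]]$; then your Part~1 kills $T$ via $0=\sum_i\langle[T,[X,e_i]],e_i\rangle=\tfrac12\langle T,X\rangle$. That invocation of Lemma~\ref{l:Amh} on the \emph{adjoint} of $A$ is the missing idea.

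For~\ref{it:ltrace3}: the logical order you propose is reversed and does not close. The splitting of \eqref{eq:biadphi} into the pair of equations \eqref{eq:twopieces} (which is what feeds Lemma~\ref{l:Amh}) requires $\Phi=0$ already; with $\Phi\ne0$ the term $\sigma_{XYZ}(\Phi(X,Y)\wedge Z)$ contributes to \emph{both} the $\ad(\h)$ and $\ad(\h)^\perp$ components, and ``pairing against $\ad_{[U,V]}$, tracing, and discarding $\Phi$'' is not a justified step -- the trace conditions from Part~1 and \eqref{eq:cyclePhi} do not make those contributions vanish. The paper establishes $\Phi=0$ \emph{first} (by Part~2 or Lemma~\ref{l:rk2} for rank $\ge 2$, by hypothesis for $SU(3)/SO(3)$, and by the separate computation in Section~\ref{s:rk1} for $\Oc P^2$), and only then splits. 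Moreover, even after one has $T(X,Y)$ with $[K_ZX,Y]-[K_ZY,X]=[T(X,Y),Z]$, the claim that ``skew-symmetry and $K_X\perp\ad(\h)$ force $T=0$'' skips the key dichotomy: the paper must invoke \cite[Lemma~4.2]{Sza} to conclude that $F_X=K_X+T(X,\cdot)$ is either zero or (for $M_0$ Hermitian) proportional to the complex structure $J$, and then a further trace computation rules out the Hermitian branch using $\Tr(J^2)=-n\ne0$. None of that is present in your sketch, and the Hermitian case cannot be swept away by formal considerations.
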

\begin{proof}
1. For any $X \in \m$ and for any $K \in \so(\m)$ we have $\sum\nolimits_{i=1}^n [[X,e_i],e_i]=\tfrac12 X$ and $\sum\nolimits_{i=1}^n ([[X,Ke_i],e_i]+[[X,e_i],Ke_i])=0$ (the first identity is well known, the second one easily follows: the inner product of the left-hand side with an arbitrary $Y \in \m$ is $\Tr((\ad_Y\ad_XK)_{|\m})-\Tr((K\ad_Y\ad_X)_{|\m})=0$).

Now taking $Z=e_i$ in \eqref{eq:biadphi}, acting by the both sides on $e_i$ and then summing up by $i = 1, \dots, n$, we obtain using the above identities:
\begin{multline*}
    \sum\nolimits_{i=1}^n ([[K_{e_i}X,Y],e_i]+[[X,K_{e_i}Y],e_i]+[[X,Y],K_{e_i}e_i]-K_{e_i}[[X,Y],e_i])\\
    = (n-3)\Phi(X,Y) +\sum\nolimits_{i=1}^n (\<\Phi(X,e_i),e_i\>Y-\<\Phi(Y,e_i),e_i\>X).
\end{multline*}
Substituting $Y=e_j$, taking the inner product with $e_j$ and then summing up by $j = 1, \dots, n$, we obtain that $0=(2n-4) \sum_{i=1}^n\<\Phi(X,e_i),e_i\>$ and the claim follows, as $M_0$ is of non-constant curvature, so $n \ge 4$.

2. If $\rk M_0 \ge 3$, Lemma~\ref{l:rkge3}\eqref{it:A[X,Y]} implies the existence of $A:\h \to \m$ such that $\Phi(X, Y)=A[X, Y]$, for all $X, Y \in \m$. Then from \eqref{eq:cyclePhi} and Lemma~\ref{l:Amh} applied to $A^*$, the adjoint operator of $A$, we obtain that $\Phi(X, Y)=[T,[X, Y]]$, for some $T \in \m$. But then from assertion~\ref{it:ltrace1} we get $0=\sum_{i=1}^n\<[T,[X,e_i]],e_i\>=\frac12 \<T,X\>$, for all $X \in \m$. So $T=0$ and $\Phi=0$.

3. We have $\Phi=0$ (from the assumption of Proposition~\ref{p:irre} for $M_0=SU(3)/SO(3)$; from Section~\ref{s:rk1} for $M_0=\Oc P^2$; and from assertion~\ref{it:ltrace2} and Lemma~\ref{l:rk2} in all the other cases). By \eqref{eq:biadphi} we obtain $\sigma_{XYZ}([\ad_{[X, Y]},K_Z])+\sigma_{XYZ}(\ad_{[K_XY-K_YX, Z]})=0$. As by \eqref{eq:Kperpadh} we can assume that $K_X \perp \ad(\h)$, for all $X \in \m$, the first term on the left-hand side of \eqref{eq:biadphi} is also orthogonal to $\ad(\h)$, while the second term belongs to $\ad(\h)$, so for all $X, Y, Z \in \m$, we get
\begin{equation}\label{eq:twopieces}
    \sigma_{XYZ}[\ad_{[X, Y]},K_Z]=0, \qquad \sigma_{XYZ}[K_XY-K_YX, Z]=0.
\end{equation}
Acting by the first equation of \eqref{eq:twopieces} on an arbitrary $X_1 \in \m$ and then taking the inner product with $X_2 \in \m$ we obtain
$\sigma_{XYZ}\<[X, Y],[K_ZX_1,X_2]-[K_ZX_2,X_1]\>=0$, which by Lemma~\ref{l:Amh} implies the existence of $T=T(X_1,X_2) \in \m$ such that
$[K_ZX_1,X_2]-[K_ZX_2,X_1]=\ad_{T(X_1,X_2)}Z$. As the left-hand side is bilinear in $X_1, X_2$ and skew-symmetric, the same properties are satisfied
by the map $T$, so for all $X, Y, Z \in \m$,
\begin{equation}\label{eq:KT}
    [K_ZX,Y]-[K_ZY,X]=[T(X,Y),Z], \quad \text{for some } T \in \Hom(\Lambda^2\m, \m).
\end{equation}
Combined with the second equation of \eqref{eq:twopieces}, this implies $[K_XY+T(X,Y),Z]=[K_XZ+T(X,Z),Y]$. For every $X \in \m$, define $F_X \in \End(\m)$ by $F_XY=K_XY+T(X,Y)$. Then for all $Y,Z \in \m$ we have $[F_XY,Z]=[F_XZ,Y]$. Taking the inner product of the both sides with an arbitrary $U \in \h$ we obtain that $\ad_UF_X \in \End(\m)$ is symmetric, that is $\ad_UF_X=-F_X^t\ad_U$. By \cite[Lemma~4.2]{Sza}, we obtain that either $F_X=0$, or $M_0$ is Hermitian and $F_X$ is proportional to $J$, the complex structure on $\m$. As in the latter case $F_X$ depends linearly on $X$, it follows that
\begin{equation}\label{eq:0J}
    \text{either } T(X,Y)=-K_XY, \quad \text{or for some }l \in \m, \; T(X,Y)=-K_XY + \<l,X\>JY.
\end{equation}
Note that in the both cases, $\<T(X,Y),Y\>=0$, so as $T(X,Y)=-T(Y,X)$,
the trilinear map $(X,Y,Z) \mapsto \<T(X,Y),Z\>$ is skew-symmetric. Moreover, from the second equation of
\eqref{eq:twopieces} and from \eqref{eq:KT} we obtain $\sigma_{XYZ}[T(X,Y),Z]=0$. Taking the inner product with an arbitrary $U \in \h$ and using
the skew-symmetry of $\<T(X,Y),Z\>$ we obtain $[U,T(X,Y)]=T([U,X],Y)+T(X,[U,Y])$, so defining for every $X \in \m$ the operator
$T_X$ by $T_XY=T(X,Y)$ we get $T_{[U,X]}=[\ad_U,T_X]$, for all $X \in \m, \; U \in \h$. Then for an orthonormal basis $\{U_a\}$ for $\h$ we get
\begin{equation*}
    \sum\nolimits_a T_{[U_a,[U_a,X]]}=\sum\nolimits_a[\ad_{U_a},[\ad_{U_a},T_X]]=\sum\nolimits_a(\ad_{U_a}^2T_X-2\ad_{U_a}T_X\ad_{U_a}+T_X\ad_{U_a}^2).
\end{equation*}
As it is well known, $\sum_a[U_a,[U_a,X]]=\frac12X$, and then $\sum_a\ad_{U_a}^2T_X=\sum_aT_X\ad_{U_a}^2=\frac12T_X$. Moreover, for any $Y,Z \in \m$,
we have $\<(\sum_a\ad_{U_a}T_X\ad_{U_a})Y,Z\>=\Tr((\ad_ZT_X\ad_Y)_{|\h})=\Tr((T_X\ad_Y\ad_Z)_{|\m})$. so the above equation gives
$\<T_XY,Z\>=4\Tr((T_X\ad_Y\ad_Z)_{|\m})$. Subtracting the same equation, with $Y$ and $Z$ interchanged and using \eqref{eq:0J} and the fact that
$K_X \perp \ad(\h)$, we obtain that $T=0$ (and hence $K=0$) in the first case of \eqref{eq:0J} and that $\<T(X,Y),Z\>=4\<l,X\>\Tr((J\ad_{[Y,Z]})_{|\m})$, in the second case. If $l \ne 0$, the skew-symmetry of $T$ implies $\Tr((J\ad_{[Y,X]})_{|\m})=0$, for all $X, Y \in \m$, so $\Tr((J \ad_{U})_{|\m})=0$, for all $U \in \h$. But for a Hermitian symmetric space $M_0$, we have $J=\ad_{U_0}$, where $U_0$ spans the center of $\h$, so $\Tr((J \ad_{U_0})_{|\m})=\Tr (J^2)=-n$, a contradiction. It follows that $l=0$, so $T=0$ and $K=0$, also in this case.
\end{proof}

\begin{remark}\label{rem:KPhi}
Note that by equation~\eqref{eq:biadphi}, $\Phi$ is uniquely determined by $K$, namely, from the equation obtained in the proof of  Lemma~\ref{l:trace}\eqref{it:ltrace1} and the fact that $\sum_{i=1}^n \<\Phi(X,e_i),e_i\>=0$ it follows that $\sum_{i=1}^n(\K K_{e_i})(X,Y,e_i)=(n-3)\<\Phi(X,Y),Z\>$ (in the notation of Proposition~\ref{p:red}). Moreover, equation \eqref{eq:cyclePhi} is then automatically satisfies.
\end{remark}

The proof of Proposition~\ref{p:irre} in the remaining cases, for the complex and the hyperbolic projective spaces, and also of the fact that $\Phi=0$ for the Cayley projective plane which was used in the proof of Lemma~\ref{l:trace}\eqref{it:ltrace3} is given in Section~\ref{s:rk1}.

\section{Proof of Lemma~\ref{l:Amh} and Lemma~\ref{l:rkge3}}
\label{s:rr}

We start with briefly recalling some well-known facts on the restricted roots (see \cite{Hel, Nag,NT}).

Let $\ag \subset \m$ be a Cartan subspace (a maximal abelian subspace) and let $\Delta \subset \ag^*$ be the set of restricted roots. We have
orthogonal root decompositions
\begin{equation*}
    \m=\ag \oplus \bigoplus\nolimits_{\a \in \Delta^+}\m_\a, \qquad \h=\h_0 \oplus \bigoplus\nolimits_{\a \in \Delta^+}\h_\a,
\end{equation*}
where $\Delta^+$ is the set of positive roots, and $\m_\a=\m_{-\a}, \; \h_\a=\h_{-\a}$ are the root spaces. For any $\a \in \Delta$ there exists a
linear isometry $\theta_\a:\m_\a \to \h_\a, \; \theta_{-\a}=-\theta_\a$, such that $[H, X_\a]=\a(H)\theta_\a X_\a, \; [H, \theta_\a X_\a]=-\a(H) X_\a$, for all $H \in \ag, \; X_\a \in \m_\a$ (in particular, $\dim \m_\a=\dim \h_\a =: m_\a$, the multiplicity of the root $\a \in \Delta$). Moreover, for any $\a, \b \in \Delta \cup \{0\}, \; [\m_\a,\m_\b], [\h_\a,\h_\b] \subset \h_{\a+\b}+\h_{\a-\b}, \; [\h_\a,\m_\b] \subset \m_{\a+\b}+\m_{\a-\b}$, where we denote $\m_0=\ag$. A root system $\Delta$ is called \emph{reduced}, if $\Delta \cap \br \a = \pm \a$, for any $\a \in \Delta$.

\begin{lemma}\label{l:rr}
{ \ }
\begin{enumerate}[\rm 1.]
  \item \label{it:rr1}
  If the roots $\a$ and $\b$ are not proportional and not orthogonal, then $[X_\a,X_\b] \ne 0$ and $[X_\a,\theta_\b X_\b] \ne 0$, for all nonzero $X_\a \in \m_\a, \; X_\b \in \m_\b$.
  \item \label{it:rr2}
  If $\Delta$ is reduced and $\a, \b \in \Delta$, with $\<\a, \b\> > 0, \a \ne \b$, then $[\m_\a,\m_\b]$ has a nonzero $\h_{\a-\b}$ component. \item \label{it:rr3}
  If $\a, \b, \gamma=\a+\b \in \Delta$ and $\a$ and $\b$ are not proportional, then $\<[X_\gamma, X_\a], \theta_\b X_\b\>=\<[X_\b, X_\gamma], \theta_\a X_\a\>$ $=-\<[X_\a, X_\b], \theta_\gamma X_\gamma\>$, for all $X_\a \in \m_a, \; X_\b \in \m_\b$ and $X_\gamma \in \m_\gamma$.
\end{enumerate}
\end{lemma}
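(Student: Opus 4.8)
I would prove Lemma~\ref{l:rr} as follows. The plan is to read off assertion~\ref{it:rr3} from the $\ad$-invariance of the Killing form, and to obtain assertions~\ref{it:rr1} and~\ref{it:rr2} from a little book-keeping with the complexified root vectors along the $\a$-string through $\b$. Throughout I use that $M_0$ is of compact type: the metric on $\g$ is a negative multiple of the Killing form $B$, so every $\ad_X$ ($X\in\g$) is skew-symmetric, hence semisimple, the isometries $\theta_\mu$ respect $B$, and root spaces attached to distinct restricted roots $\pm\mu\ne\pm\nu$ are $B$-orthogonal.

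For~\ref{it:rr3}: with $\gamma=\a+\b$, repeated use of $B([U,V],W)=B(U,[V,W])$, of $[\h_\mu,\m_\nu]\subset\m_{\mu+\nu}+\m_{\mu-\nu}$, and of the orthogonality above rewrites each of the three expressions as a single pairing of the shape $\pm\<X_{*},[X_{*},\theta_{*}X_{*}]\>$; matching them up, the asserted identities are nothing but the restricted-root incarnation of the cyclic symmetry $N_{\mu\nu}=N_{\nu\lambda}=N_{\lambda\mu}$ (for $\mu+\nu+\lambda=0$, together with $N_{-\mu,-\nu}=-N_{\mu\nu}$) of the structure constants of $\g^{\bc}$, applied to the triple $\gamma,-\a,-\b$. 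I expect this step to be a routine sign-chase using $\theta_{-\mu}=-\theta_\mu$.

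For~\ref{it:rr1}: since $\m_\b=\m_{-\b}$ and $\h_\b=\h_{-\b}$, there is no loss in assuming $\<\a,\b\>>0$. Fix $0\ne X_\a\in\m_\a$ and let $H_\a\in\ag$ be the $\ag$-component of $[X_\a,\theta_\a X_\a]$; then $c:=\a(H_\a)\ne0$ and, because $\<\a,\b\>\ne0$, also $b:=\b(H_\a)\ne0$. Working in $\g^{\bc}$, put $E_{\pm\a}=\tfrac12(X_\a\mp\i\theta_\a X_\a)$, so that $[H_\a,E_{\pm\a}]=\pm\i c\,E_{\pm\a}$ and $X_\a=E_\a+E_{-\a}$. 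The key observation is that on the $\i b$-eigenspace of $\ad_{H_\a}$ inside $\m_\b^{\bc}\oplus\h_\b^{\bc}$ the operators $\ad_{E_\a}$ and $\ad_{E_{-\a}}$ shift the $\ad_{H_\a}$-eigenvalue by $+\i c$ and $-\i c$ respectively, so their images lie in distinct eigenspaces; hence $\ad_{X_\a}d=0$ forces $\ad_{E_\a}d=\ad_{E_{-\a}}d=0$, whence $\ad_{[E_\a,E_{-\a}]}d=0$, and since $[E_\a,E_{-\a}]=\tfrac{\i}{2}[X_\a,\theta_\a X_\a]=\tfrac{\i}{2}(H_\a+Z)$ with $Z\in\m_{2\a}$ (so $\ad_Z$ shifts the $\ad_{H_\a}$-eigenvalue by $\pm2\i c$), extracting the $\i b$-eigenspace component yields $\i b\,d=0$, i.e.\ $d=0$. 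Running the same argument on the $-\i b$-eigenspace, and noting that the two images overlap only when $\<\b,\a^\vee\>=2$, one gets that $\ad_{X_\a}$ is injective on $\m_\b$ and on $\h_\b$ whenever $\<\b,\a^\vee\>\ne2$; this is precisely the claim in that case. In the remaining long--short case $\<\b,\a^\vee\>=2$ one necessarily has $\<\a,\b^\vee\>=1$ (the product of the two Cartan integers is at most $3$), so the same injectivity holds with $\a$ and $\b$ swapped; then $[X_\a,X_\b]=-\ad_{X_\b}X_\a\ne0$ at once, while $[X_\a,\theta_\b X_\b]=0$ would force, on inspecting the relevant eigenspace components, $[\theta_\a X_\a,X_\b]=0$, contradicting the injectivity of $\ad_{X_\b}$ on $\h_\a$. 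I expect this degenerate case to be the main technical nuisance of the argument; it is all classical (cf.\ \cite{Hel}).

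For~\ref{it:rr2}: with $\Delta$ reduced, $\<\a,\b\>>0$ and $\a\ne\b$, both Cartan integers are positive with product at most $3$, so $\a-\b\in\Delta$. By the isometry property of the $\theta_\mu$ and $B([U,V],W)=B(U,[V,W])$, the statement that $[\m_\a,\m_\b]$ has a nonzero $\h_{\a-\b}$-component is equivalent to the statement that $[\m_\b,\h_{\a-\b}]$ has a nonzero $\m_\a$-component. If $\a-2\b\notin\Delta$ then $[\m_\b,\h_{\a-\b}]\subset\m_\a$ and the non-vanishing is immediate from~\ref{it:rr1} applied to the (necessarily non-orthogonal) pair $\b,\a-\b$. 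The only configurations left are those where $\a-2\b$ is again a restricted root, i.e.\ $\{\a,\b\}$ spans a non-simply-laced rank-two subsystem ($B_2$, $G_2$, or $BC_2$ when $\Delta$ is non-reduced); there are finitely many of them, and the required non-vanishing is checked directly from the explicit bracket relations, invoking~\ref{it:rr3}. I expect the $G_2$ configurations, where both $\a-2\b$ and $\b-2\a$ may be roots, to be the one genuinely delicate point.
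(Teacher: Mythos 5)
The ideas here are sound in outline, but you take a much longer route than necessary for parts~1 and~2, and you leave the hard part of part~2 unfilled in a way that hides that it is harder than you think.

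For assertion~\ref{it:rr1} you do not need the complexified $\a$-string at all. Both the paper's proof and, in fact, your own workaround for the degenerate case $\<\b,\a^\vee\>=2$ rest on the same elementary real Jacobi trick: if $[X_\a,X_\b]=0$, then bracketing with suitable $H\in\ag$ (one with $\a(H)=0\ne\b(H)$, another with $\b(H)=0\ne\a(H)$) forces $[X_\a,\theta_\b X_\b]=0$ and $[\theta_\a X_\a,X_\b]=0$ as well, and then $0=[X_\a,[X_\b,\theta_\b X_\b]]$. Since $[X_\b,\theta_\b X_\b]$ has $\ag$-component $\|X_\b\|^2\b^*$ and $\<\a,\b\>\ne0$, the $\h_\a$-component of this last expression equals $-\|X_\b\|^2\<\a,\b\>\theta_\a X_\a\ne 0$, while the remaining terms lie in $\h_{2\b\pm\a}$, a contradiction. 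This supersedes the entire eigenvalue book-keeping, including the case split at $\<\b,\a^\vee\>=2$; you would be reproving something you already need from scratch.

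For assertion~\ref{it:rr2}, your reduction criterion ($\a-2\b\notin\Delta$) is strictly weaker than it should be, and your remark that only the $\mathrm{G}_2$ configurations are delicate is wrong. Whenever $\a$ is long and $\b$ is short with $\<\a,\b\>>0$ (already in $\mathrm{B}_2$), one has $\a-2\b=s_\b(\a)\in\Delta$, so your reduction to assertion~\ref{it:rr1} fails there too and you would have to do explicit $\mathrm{B}_2$ bracket computations. The observation that avoids this — and that the paper uses — is that in a rank-two subsystem of type $\mathrm{A}_2$ or $\mathrm{B}_2$, any two roots $\a\ne\b$ with $\<\a,\b\>>0$ satisfy $\a+\b\notin\Delta$; hence $[\m_\a,\m_\b]\subset\h_{\a+\b}+\h_{\a-\b}=\h_{\a-\b}$ outright, and assertion~\ref{it:rr1} finishes it. Only the $\mathrm{G}_2$ configuration with all three of $\a$, $\b$, $\a-\b$ short survives, and for it one passes to the maximal-rank totally geodesic submanifold with root system $\mathrm{G}_2$ and invokes the surjectivity $[\g_{\a_1},\g_{-\b_1}]=\g_{\a_1-\b_1}$ in $\g_2^\bc$. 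So the content you were deferring to ``explicit bracket relations'' is larger than you anticipate and is better organized by the $\a+\b\notin\Delta$ criterion.

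For assertion~\ref{it:rr3} the idea (invariance of the inner product) is right, but your description of ``rewriting each pairing as $\pm\<X_*,[X_*,\theta_*X_*]\>$'' does not correspond to any coherent chain of manipulations. The actual argument is a one-liner: choose $H\in\ag$ with $\a(H)=0\ne\b(H)$; then $\b(H)\<[X_\a,X_\b],\theta_\gamma X_\gamma\>=\<[X_\a,X_\b],[H,X_\gamma]\>=-\<[X_\a,[H,X_\b]],X_\gamma\>=-\b(H)\<[X_\gamma,X_\a],\theta_\b X_\b\>$, and the companion identity follows by swapping $\a$ and $\b$. No appeal to structure constants $N_{\mu\nu}$ of $\g^\bc$ is needed, and in any case the translation from complex structure constants to the real restricted-root picture would itself require care for multiplicities $m_\a>1$.
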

\begin{proof}
1. Suppose that $[X_\a,X_\b] = 0$ and let $H \in \ag$ be such that $\a(H) \ne 0, \; \b(H)=0$. Then $0=[H,[X_\a, X_\b]]=\a(H)[X_\a, \theta_\b X_\b]$, so $[X_\a, \theta_\b X_\b]=0$. Similarly, if $[X_\a, \theta_\b X_\b]=0$, then $[X_\a,X_\b] = 0$. In the both cases, we get
$0=[X_\a,[X_\b,\theta_\b X_\b]]$. As $[X_\b,\theta_\b X_\b] \in \|X_\b\|^2\b^* + \m_{2\b}$, where $\b^* \in \ag$ is dual to $\b$, we obtain
$0=[X_\a,[X_\b,\theta_\b X_\b]]=-\|X_\b\|^2 \<\a, \b\>\theta_\a X_\a$ plus possibly some element from $\m_{2\b-\a}+\m_{2\b+\a}$, a contradiction.

2. The fact that $\a-\b \in \Delta$ when $\<\a, \b\> > 0, \a \ne \b$, is a general property of a root system. The subset $\Delta'=(\mathbb{Z}\a+ \mathbb{Z}\b) \cap \Delta$ is a root subsystem of $\Delta$ of type $\mathrm{A}_2, \mathrm{B}_2$ or $\mathrm{G}_2$. 
In the first two cases, for any two roots $\a',\b' \in \Delta'$ with $\<\a',\b'\> > 0$, we have $\a'+\b' \notin \Delta'$, so $\a+\b \notin \Delta$ and the claim follows from assertion~\ref{it:rr1}. In the third case, the same argument applies, unless all three roots $\a', \b', \a'-\b'$ are short. But then the subspace $\m'=\Span(\a^*,\b^*) \oplus \sum_{\gamma \in \Delta} \m_\gamma$ is a Lie triple subsystem of $\m$ tangent to a compact symmetric space with the root system $\mathrm{G}_2$, that is, either to the group $G_2$ or to $G_2/\mathrm{SO}(4)$. As the latter space has the maximal rank, the claim follows from the fact that for any three short roots $\a_1, \b_1, \a_1-\b_1$ of the complex simple Lie algebra $\g_2^\bc$, we have $[\g_{\a_1},\g_{-\b_1}]=\g_{\a_1-\b_1}$ for the corresponding root spaces.

3. For $H \in \ag$ such that $\a(H)=0, \; \b(H) \ne 0$, we have $\b(H)\<[X_\a, X_\b], \theta_\gamma X_\gamma\>=\<[X_\a, X_\b], [H, X_\gamma]\>=
-\<[X_\a, [H, X_\b]], X_\gamma\>=-\b(H)\<[X_\a, \theta_\b X_\b], X_\gamma\>=-\b(H)\<[X_\gamma, X_\a], \theta_\b X_\b\>$, hence
$\<[X_\gamma, X_\a], \theta_\b X_\b\>$ $=-\<[X_\a, X_\b], \theta_\gamma X_\gamma\>$. Interchanging $\a$ and $\b$ we get the second equation.
\end{proof}

We will also use the following elementary fact of linear algebra.
\begin{lemma}\label{l:cross}
Let $V$ be a complex or a real Euclidean space, and let $\Psi: \Lambda^2V \to V$ be a linear map.
\begin{enumerate}[\rm 1.]
  \item \label{it:cr1}
  If $\Psi(X,Y) \in \Span(X,Y)$, for all $X, Y \in V$, then $\Psi(X,Y)=\<p,X\>Y-\<p,Y\>X$, for some $p \in V$.
  \item \label{it:cr2}
  If $\sigma_{XYZ}(\Psi(X,Y) \wedge Z)=0$, for all $X,Y,Z \in V$ and $\dim V \ge 4$, then $\Psi=0$.
\end{enumerate}
\end{lemma}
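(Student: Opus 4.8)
The plan is to prove part~\ref{it:cr1} first and then deduce part~\ref{it:cr2} from it. For part~\ref{it:cr1} one may assume $\dim V\ge 3$, the cases $\dim V\le 2$ being immediate ($\Lambda^2V=0$ when $\dim V=1$, and when $\dim V=2$ the single value $\Psi(e_1,e_2)$ is of the required form $\<p,e_1\>e_2-\<p,e_2\>e_1$ for a suitable $p$). Fix an orthonormal basis $e_1,\dots,e_n$ of $V$ and write $\Psi(e_i,e_j)=\sum_k c_{ij}^k e_k$, so $c_{ij}^k=-c_{ji}^k$. Applying the hypothesis to the pair $(e_i,e_j)$ gives $c_{ij}^k=0$ for $k\notin\{i,j\}$, so $\Psi(e_i,e_j)=c_{ij}^i e_i+c_{ij}^j e_j$. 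Applying it to $(e_i+e_j,e_k)$ with $i,j,k$ pairwise distinct, the vector $\Psi(e_i,e_k)+\Psi(e_j,e_k)$ must lie in $\Span(e_i+e_j,e_k)$, which forces its $e_i$- and $e_j$-components to coincide, i.e.\ $c_{ik}^i=c_{jk}^j$. Hence $c_{ij}^i$ depends on $j$ only; setting $p_j:=-c_{ij}^i$ (for any $i\ne j$) and using $c_{ij}^j=-c_{ji}^j=p_i$, one gets $\Psi(e_i,e_j)=p_ie_j-p_je_i$, i.e.\ $\Psi(X,Y)=\<p,X\>Y-\<p,Y\>X$ with $p=\sum_j p_je_j$. (Alternatively, coordinate-free: for linearly independent $X,Y$ write $\Psi(X,Y)=a(X,Y)X+b(X,Y)Y$; expanding $\Psi(X,Y+Z)$ shows $b$ does not depend on its second argument, and skew-symmetry of $\Psi$ then gives $a(X,Y)=-b(Y)$ together with linearity of $b$, whence $b=\<p,\cdot\>$.)

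For part~\ref{it:cr2}, apply the identity $\sigma_{XYZ}(\Psi(X,Y)\wedge Z)=0$ to a vector $W$ and pair the result with a vector $U$; since $(P\wedge Z)W=\<P,W\>Z-\<Z,W\>P$, this gives
\[
\sigma_{XYZ}\bigl(\<\Psi(X,Y),W\>\<Z,U\>-\<\Psi(X,Y),U\>\<Z,W\>\bigr)=0
\]
for all $X,Y,Z,U,W\in V$. Specialising $W=Z$ with $Z\perp X$ and $Z\perp Y$, and letting $U$ be arbitrary, this reduces to the vector identity
\[
\|Z\|^2\,\Psi(X,Y)=\<\Psi(X,Y),Z\>\,Z+\<\Psi(Y,Z),Z\>\,X+\<\Psi(Z,X),Z\>\,Y .
\]
Given linearly independent $X,Y$, the subspace $\{X,Y,\Psi(X,Y)\}^{\perp}$ has dimension $\ge 1$ because $\dim V\ge 4$, so one may pick $Z$ in it with $\|Z\|^2\ne 0$; the identity then shows $\Psi(X,Y)\in\Span(X,Y)$. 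Hence $\Psi(X,Y)\in\Span(X,Y)$ for all $X,Y$, and part~\ref{it:cr1} yields $\Psi(X,Y)=\<p,X\>Y-\<p,Y\>X$ for some $p\in V$. Substituting this back, the hypothesis $\sigma_{XYZ}(\Psi(X,Y)\wedge Z)=0$ collapses to
\[
\<p,X\>(Y\wedge Z)-\<p,Y\>(X\wedge Z)+\<p,Z\>(X\wedge Y)=0
\]
for all $X,Y,Z$; taking $X=p$ and $Y,Z$ orthogonal to $p$ and linearly independent (possible since $\dim V\ge 3$) forces $\|p\|^2(Y\wedge Z)=0$, hence $p=0$ and $\Psi=0$.

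The argument is elementary and I expect no serious obstacle. The one point needing a little care is the complex case of part~\ref{it:cr2}: there the bilinear ``Euclidean'' form can vanish on nonzero vectors, so a non-isotropic $Z$ (resp.\ the step $X=p$) is not automatically available. I would handle this by a base-change remark: the hypothesis of Lemma~\ref{l:cross} cuts out a linear subspace of $\Hom(\Lambda^2V,V)$ defined by equations with rational coefficients in a standard basis, and the conclusion asserts that this subspace equals an explicit rational subspace; since every nondegenerate complex symmetric bilinear space is the complexification of a real Euclidean one, it suffices to prove both parts for real $V$, where $\|Z\|^2>0$ for every $Z\ne 0$. (Part~\ref{it:cr1}, in any case, is already proved above without ever using the norm.)
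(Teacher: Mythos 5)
Your proof is correct and follows essentially the same plan as the paper: part~\ref{it:cr1} via coefficient analysis in an orthonormal basis, and part~\ref{it:cr2} by first showing $\Psi(X,Y)\in\Span(X,Y)$ and then invoking part~\ref{it:cr1} and substituting back to force $p=0$. The paper's part~\ref{it:cr2} is a little leaner at the one spot where you take a detour: instead of specialising to $W=Z\perp X,Y$ and extracting a vector identity involving $\|Z\|^2$ (which is what creates the complex-isotropy concern you then dispose of with the base-change remark), it simply applies the operator $\sigma_{XYZ}(\Psi(X,Y)\wedge Z)$ to an arbitrary nonzero $W\perp X,Y,Z$ and reads off $\<\Psi(X,Y),W\>Z+\<\Psi(Y,Z),W\>X+\<\Psi(Z,X),W\>Y=0$, whence by linear independence $\<\Psi(X,Y),W\>=0$; no norm of $W$ or $Z$ ever enters, so the real and complex cases are handled uniformly without any extra argument.
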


\begin{proof}
1. Relative to an orthonormal basis $e_i$ for $V$ we have $\Psi(e_i,e_j)=a_{ij}e_i-a_{ji}e_j$. Then by linearity, $a_{ij}=a_{kj}$ for all $k,i \ne j$. Take $p_i=-a_{ji}, \; j \ne i$.

2. Taking the inner product of the $\sigma_{XYZ}(\Psi(X,Y) \wedge Z)=0$ with $W \perp X, Y, Z$ (where $X,Y,Z$ are linearly independent) we get $\<\Psi(X,Y),W\>=0$, so $\Psi(X,Y)\in \Span(X,Y,Z)$, so $\Psi(X,Y)\in \Span(X,Y)$, for all $X, Y \in V$. By assertion~\ref{it:cr1}, $\Psi(X,Y)=\<p,X\>Y-\<p,Y\>X$ for some $p \in V$. But then $0=\sigma_{XYZ}(\Psi(X,Y) \wedge Z)=2\sigma_{XYZ}(\<p,X\>Y \wedge Z)$, so $p=0$.
\end{proof}

We now prove Lemma~\ref{l:rkge3} and Lemma~\ref{l:Amh} from Section~\ref{s:irred}.

\begin{replemma}{l:rkge3}
In the assumptions of Proposition~\ref{p:irre}, suppose that $\rk M_0 \ge 3$. Then
\begin{enumerate}[\rm 1.]
  \item 
  $\Phi(X, Y)=0$, for all $X, Y \in \m$ such that $[X, Y]=0$.
  \item 
  There exists a linear map $A:\h \to \m$ such that $\Phi(X, Y)=A[X, Y]$, for all $X, Y \in \m$.
\end{enumerate}
\end{replemma}

\begin{proof}
1. Let $\ag \subset \m$ be a Cartan subspace. Substituting $X,Y,Z \in \ag$ into \eqref{eq:biadphi} we obtain
\begin{equation}\label{eq:rkge3}
\ad_U=-\sigma_{XYZ}(\Phi(X,Y)\wedge Z),
\end{equation}
where $U=\sigma_{XYZ}[K_XY-K_YX, Z] \in [\ag, \m] \subset \h$.

We first prove the assertion under the assumption that $\rk M_0 \ge 4$. Chose a regular element $V \in \ag$ and then a three-dimensional subspace $\ag_3 \subset (\ag \cap V^\perp)$. The set of such subspaces $\ag_3$ is open in the Grassmannian $G(3, \ag)$. Taking $X,Y,Z$ in \eqref{eq:rkge3} spanning the subspace $\ag_3$ and acting by the both sides on $V$ we obtain that $[U,V] \in \ag_3$, as $V \perp \ag_3$. But $\<[U,V],X\>=\<U,[V,X]\>=0$ (and similarly for $Y$ and $Z$), so $[U,V]=0$. As $V \in \ag$ is regular, it follows that $[U, \ag]=0$, so $\ad_U \ag_3=0$. But from \eqref{eq:rkge3} we have $\<\ad_U \ag_3^\perp, \ag_3^\perp\>=0$. As $\ad_U$ is skew-symmetric, we obtain $\ad_U=0$, so $\sigma_{XYZ}(\Phi(X,Y)\wedge Z)=0$ by \eqref{eq:rkge3}. Taking the inner product of this equation with any vector from $\ag^\perp$ and using the fact that $X, Y,Z$ are linearly independent, we obtain that $\Phi(X,Y) \in \ag$. As this is satisfied for all $\ag_3$ from an open subset of the Grassmannian $G(3, \ag)$, we get that $\Phi(X,Y) \in \ag$, for any $X,Y \in \ag$. Then by Lemma~\ref{l:cross}\eqref{it:cr2}, $\Phi(X,Y)=0$, for all $X, Y \in \ag$. This proves the assertion as any two commuting elements of $\m$ lie in a Cartan subspace.

Now suppose that $\rk M_0 = 3$. Let $\ag \subset \m$ be a Cartan subspace. For any $X,Y,Z$ spanning $\ag$, equation \eqref{eq:rkge3} gives $[U,\ag^\perp] \subset \ag$, so
\begin{equation}\label{eq:rk3ad}
[U,\m_\b] \subset \ag, \text{ for any } \b \in \Delta.
\end{equation}
Note that $U \in [\ag, \m] = \oplus_{\a \in \Delta^+}\h_\a$, so $U =\sum_{\a \in \Delta^+}U_\a$, for some $U_\a \in \h_\a$. It follows from \eqref{eq:rk3ad} that for any two nonproportional roots $\a, \b$ we have $\sum_{j \in \mathbb{Z}}[U_{\a+j\b},\m_\b]=0$, where the sum is taken over all $j \in \mathbb{Z}$ such that $\a+j\b \in \Delta$. In particular, if the $\b$-series of $\a$ has length two and $\a \not\perp \b$, we obtain $U_\a=0$ by Lemma~\ref{l:rr}\eqref{it:rr1}. Now, from the classification of restricted root systems (see \cite{Hel} or the table in \cite{Tam}), we get that $\Delta$ is of one of types $\mathrm{A}_3, \mathrm{B}_3, \mathrm{C}_3, \mathrm{D}_3$ or $\mathrm{BC}_3$. As for the root systems of types $\mathrm{A}_3, \mathrm{B}_3, \mathrm{D}_3$, every root $\a$ can be included in a $\b$-series of length two, with $\a \not\parallel \b, \; \a \not\perp \b$, we obtain $U_\a=0$, for all $\a \in \Delta$, that is, $U=0$. If $\Delta$ is of type $\mathrm{BC}_3$ (so that $\Delta=\{\pm \omega_i, \pm 2\omega_i, \pm \omega_i \pm \omega_j\},\; 1 \le i < j \le 3$), the same arguments work for all the roots except for $\pm 2\omega_i$. But if $U=\sum_{i=1}^3 U_{2\omega_i}$, equation \eqref{eq:rk3ad} implies that $[U_{2\omega_i}, \m_i]=0$, for all $i=1,2,3$. It then follows that $U_{2\omega_i}=0$, as $\br\omega_i^* \oplus \m_{\omega_i} \oplus \m_{2\omega_i}$ is a Lie triple system tangent to a rank one symmetric space (actually, to a complex or to a quaternionic projective space). Hence $U_\a=0$, for all $\a \in \Delta$, so $U=0$ in this case, as well. Finally,
suppose that $\Delta$ is of type $\mathrm{C}_3$ (so that $\Delta=\{\pm \omega_i \pm \omega_j, \pm 2\omega_i \},\; 1 \le i < j \le 3$). As every root
$\pm \omega_i \pm \omega_j, \; i \ne j$, is a member of the $2\omega_i$-series of length two, the same arguments as above show that $U_{\pm \omega_i \pm \omega_j}=0$, so $U=\sum_{i=1}^3 U_{2\omega_i}$. Then by \eqref{eq:rk3ad}, $[U, \m_{\pm \omega_i \pm \omega_j}]=0,\; i \ne j$. As $U$ commutes with both $\m_{\omega_1 + \omega_3}$ and $\m_{\omega_2 - \omega_3}$, it also commutes with $[\m_{\omega_1 + \omega_3},\m_{\omega_2 - \omega_3}]=\h_{\omega_1 + \omega_2}$ (the equality follows from Lemma~\ref{l:rr}\eqref{it:rr1}). Therefore $U$ commutes with the subspace $[\h_{\omega_1 + \omega_2},\m_{\omega_1 + \omega_2}]=\br (\omega_1 + \omega_2)^*$. It follows that $U_{2\omega_1}=U_{2\omega_2}=0$. Similar argument shows that also $U_{2\omega_3}=0$, hence again $U=0$.

As $U=0$ in all the cases, equation \eqref{eq:rkge3} implies that $\sigma_{XYZ}(\Phi(X,Y)\wedge Z)=0$, for all $X,Y,Z$ spanning a Cartan subspace
$\ag \subset \m$. Taking the inner product of this equation with any vector from $\ag^\perp$, we obtain that $\Phi(X,Y) \in \ag$, for all
$X,Y \in \ag$. Then from $\sigma_{XYZ}(\Phi(X,Y)\wedge Z)=0$ it follows that for every Cartan subspace $\ag$ there exists a symmetric operator
$S^{\ag} \in \Sym(\ag)$ such that $\Phi(X,Y)=S^{\ag}(X \times Y)$, where $X \times Y$ is the cross-product in the three-dimensional Euclidean space
$\ag$. Now, for every root $\a \in \Delta$, the subspace $\ag'=\Ker \a \oplus \br X_\a$, with a nonzero $X_\a \in \m_\a$, is again a Cartan subspace,
so for $X, Y \in \Ker \a, \quad \Phi(X,Y) \in \ag \cap \ag'=\Ker \a$. It follows that $S^{\ag} \a^* \perp \a^*$, for any $\a \in \Delta$, that is,
$\<S^{\ag} \a^*, \a^*\>=0$. An inspection of root systems of types $\mathrm{A}_3, \mathrm{B}_3, \mathrm{C}_3, \mathrm{D}_3$ and $\mathrm{BC}_3$ shows that this implies $S^{\ag}=0$ in all the cases. Therefore $\Phi(X,Y)=0$, for all $X, Y \in \ag$. This proves the assertion also for the spaces of rank three.

2. It suffices to prove the following: if $K \in \so(\m)$ is a skew-symmetric operator such that $\<KX,Y\>=0$, for any $X, Y \in \m$ with $[X,Y]=0$,
then there exists $U \in \h$ such that $K=\ad_U$ (indeed, by assertion~1, we would then have that for every $e \in \m$, there exists $U \in \h$
such that $\<\Phi(X,Y),e\>=\<\ad_UX,Y\>=\<U, [X,Y]\>$).

Introduce the boundary operator $\db:\so(\m) \to \h$ by putting $\db(X \wedge Y) = [X, Y]$ and extending by linearity (it is easy to see that $\db$
is well-defined and that for $K \in \so(\m), \; \db(K)=-\frac12 \sum_i [Ke_i,e_i]$, for an orthonormal basis $e_i$ of $\m$).
The space $\so(\m)$ is an $\h$-module, with an $\h$-invariant inner product $\<A_1,A_2\>=\Tr(A_1A_2^t)$. For $K \in \so(\m), \; X, Y \in \m$, we have
$\<K, X \wedge Y\>=2\<KX,Y\>$. In particular, for $U \in \h, \; X, Y \in \m$, we obtain $\<\ad_U, X \wedge Y\>=2\<U, [X,Y]\>$, so the orthogonal
complement to the submodule $\ad(\h) \subset \so(\m)$ is an $\h$-module $\mathcal{M} = \Ker \db$, the space of all those
$K = \sum_i X_i \wedge Y_i \in \so(\m), \; X_i, Y_i \in \m$, such that $\sum_i [X_i,Y_i]=0$.
The fact that $\<KX,Y\>=0$, for any $X, Y \in \m$ with $[X,Y]=0$, is equivalent to the fact that $K$ is orthogonal to the subspace
$\mathcal{D} \subset \mathcal{M}$ spanned by all $X \wedge Y \in \mathcal{M}$ (we will call the elements of $\mathcal{D}$ \emph{decomposable}).

The claim of the assertion is therefore equivalent to the fact that every element of the $\h$-submodule $\mathcal{M}$ is decomposable, that is, to the fact that if $K = \sum_i X_i \wedge Y_i, \; X_i, Y_i \in \m$, with $\sum_i [X_i,Y_i]=0$, then $K = \sum_j X'_j \wedge Y'_j, \; X'_j, Y'_j \in \m$,
with $[X'_j,Y'_j]=0$, for every $j$. Clearly, $\mathcal{D} \subset \mathcal{M}$ is an $\h$-submodule, as for any $U \in \h$ and for any commuting
$X, Y \in \m$, we have
\begin{equation}\label{eq:UXYXUY}
\mathcal{D} \ni \frac{d}{dt}_{|t=0}(\exp(t\ad_U)X)\wedge(\exp(t\ad_U)Y)=[U,X] \wedge Y + X \wedge [U,Y]=[\ad_U,X \wedge Y].
\end{equation}
Let $\ag \subset \m$ be a Cartan subspace and $\m_\a, \h_\a$ be the root subspaces. We will use the following facts:

\smallskip

\emph{Fact 1.} If $X_1 \in \m_a$ and $X_2 \in \m_\b, \; \b \nparallel \a$, then $X_1 \wedge X_2 \equiv H \wedge Z \mod(\mathcal{D})$, where
$H \in \ag, \; Z \in \m_{\a+\b} \oplus \m_{\a-\b}$. To see that, choose $H \in \ag$ such that $\a(H)=0 \ne \b(H)$. Then $[H, X_1]=0$, so
by \eqref{eq:UXYXUY} with $U=\b(H)^{-1}\theta_bX_2, \; X=X_1, \; Y=H$, we obtain $X_1 \wedge X_2-H \wedge [\b(H)^{-1}\theta_bX_2, X_1] \in \mathcal{D}$.

\smallskip

\emph{Fact 2.} If $K=\sum_i H_i \wedge X_i \in \mathcal{M}$, where $H_i \in \ag$, then $K \in \mathcal{D}$. Indeed, as
$\ag\wedge \ag \subset \mathcal{D}$ and $H \wedge X \in \mathcal{D}$, if $H \in \ag, \; X \in \m_\a$ and $\a(H)=0$, we obtain that
$K \equiv \sum_{\a \in \Delta^+} \a^* \wedge X^{\a}\mod(\mathcal{D})$, where $X^{\a} \in \m_\a$. But the latter sum belongs to $\mathcal{M}$, only
if all the $X^\a$ are zero (as $\db(\a^* \wedge X^{\a}) \in \m_\a$).

\smallskip

\emph{Fact 3.} The claim of the assertion (which is equivalent to the fact that $\mathcal{D}=\mathcal{M}$) is equivalent to the fact that
$[\ad_U, K] \in \mathcal{D}$, for any $K \in \mathcal{M}$ and any $U \in \h_\a,\; \a \in \Delta^+$.

Indeed, although the $\h$-module $\mathcal{M}$ can be reducible, it contains no trivial submodules, that is, no nonzero $K \in \mathcal{M}$ commutes with $\ad(\h)$. Otherwise, for such a $K$ we would have had $K\ad_{[X_1,X_2]}X_3=\ad_{[X_1,X_2]}KX_3$, so
$\<[[X_1,X_2],X_3],KX_4\> + \<[[X_1,X_2],KX_3],X_4\>=0$, for all $X_1,X_2,X_3,X_4 \in \m$, so
$\<[[KX_1,X_2],X_3],X_4\> + \<[[X_1,KX_2],X_3],X_4\>+\<[[X_1,X_2],X_3],KX_4\> + \<[[X_1,X_2],KX_3],X_4\>=0$, which would imply that $K$ is a derivation of the Lie triple system $\m$, so $K \in \ad(\h)=\mathcal{M}^\perp$ \cite[Theorem~2.11]{Lis}.

It follows that for any $K \in \mathcal{M}$, there exist $U_i \in \h$ and $K_i \in \mathcal{M}$ such that $K=\sum_i[\ad_{U_i}, K_i]$, so the claim of
the assertion is equivalent to the fact that $[\ad_U, K] \in \mathcal{D}$, for any $U \in \h$ and any $K \in  \mathcal{M}$. Suppose that we can prove
this fact for any $K \in  \mathcal{M}$ and any $U \in \h_\a,\; \a \in \Delta^+$. Then, as $\mathcal{D}$ is an $\h$-module, we obtain that $[\h_\a,[\h_\a,K]] \in \mathcal{D}$, for all $\a \in \Delta^+$, hence $[[\h_\a,\h_\a],K] \in \mathcal{D}$. But $\sum_{\a \in \Delta^+} [\h_\a,\h_\a] + \sum_{\a \in \Delta^+} \h_\a \supset \h_0$ (and in the reduced case, we simply have $\sum_{\a \in \Delta^+} [\h_\a,\h_\a] = \h_0$).
Otherwise, $\pi_{\h_0} \sum_{\a \in \Delta^+} [\h_\a,\h_\a] \ne \h_0$, so there exists a nonzero $U \in \h_0$ orthogonal to all the $[\h_\a,\h_\a]$.
But then $[U, \h_\a]=0$, and from $[U, \ag]=0$ it follows that $[U, \m_\a]=0$, so $[U, \m]=0$, hence $U=0$. Therefore $[\ad_{\h_0},K] \in \mathcal{D}$, hence $[\ad_\h,K] \in \mathcal{D}$, which proves the claim of Fact 3.

\medskip

In view of Fact 3, we have to prove that $[\ad_U, K] \in \mathcal{D}$, for any $K \in \mathcal{M}$ and any $U \in \h_\a,\; \a \in \Delta^+$. This is
trivially satisfied, if $K$ by itself belongs to $\mathcal{D}$, as $\mathcal{D}$ is an $\h$-module. Given $K \in \mathcal{M}$, it can be represented as $K=\sum_i X_i \wedge Y_i$, where every $X_i$ and every $Y_i$ belongs either to $\ag$, or to some $\m_\a$. By Fact 1, we can assume that $K= \sum_{\a \in \Delta^+} (\sum_i H_i^{\a} \wedge X_i^{\a}+ \sum_j Y_j^{\a} \wedge Z_j^{\a})$, where $H_i^{\a} \in \ag, \; X_i^{\a}, Y_j^{\a}, Z_j^{\a} \in \m_\a$.

First suppose that $\Delta$ is reduced. Then, as $K \in \mathcal{M}$, we have $\db(\sum_i H_i^{\a} \wedge X_i^{\a}) \in \h_\a$  and $\db(\sum_j Y_j^{\a} \wedge Z_j^{\a}) \in \h_0$, so $\sum_{\a \in \Delta^+} (\sum_i H_i^{\a} \wedge X_i^{\a}) \in \mathcal{M}$, hence it belongs to $\mathcal{D}$, by Fact 2. It remains to show that $[\ad_U,K] \in \mathcal{D}$, for any $U \in \h_\b, \; \b \in \Delta^+$, and any $K \in \mathcal{M}$ of the form $K=\sum_{\a \in \Delta^+} (\sum_j Y_j^{\a} \wedge Z_j^{\a})$, which follows from Fact 1 and Fact 2, as $[\ad_U, K]$ is a sum of the terms of the form $Y \wedge Z, \; Y \in \m_\a, Z \in \m_{\a\pm\b}$ (or $Z \in \ag$, if $\a = \pm \b$).

Now suppose that $\Delta$ is non-reduced, that is, $\Delta$ is of type $\mathrm{BC}_r$, so that $\Delta=\{\pm \omega_i, \pm 2\omega_i, \pm \omega_i \pm \omega_j\},\; 1 \le i < j \le r$. Let $K \in \mathcal{M}$. Using Fact 1 we can assume that $K$ is a linear combination of $X \wedge Y$ such that either $X \in \ag, \; Y \in \m_\a, \; \a \in \Delta$, or $X, Y \in \m_\a, \; \a \in \Delta$, or $X \in \m_{\omega_i}, \; Y \in \m_{2\omega_i}$.

The only terms $X \wedge Y$ in $K$ such that $\db(X \wedge Y) \in \h_{\omega_i+\ve \omega_j}, \; i \ne j, \; \ve = \pm 1$, are the terms with
$X \in \ag, \; Y \in \m_{\omega_i+\ve \omega_j}$. As $K \in \mathcal{M}$, the sum of all these terms appearing in $K$ also belongs to $\mathcal{M}$,
hence to $\mathcal{D}$, by Fact 2. The only terms $X \wedge Y$ in $K$ such that $\db(X \wedge Y) \in \h_{\omega_i}$, are the terms with
$X \in \ag \cup \m_{2\omega_i}, \; Y \in \m_{\omega_i}$. As $K \in \mathcal{M}$, the sum of all these terms appearing in $K$ also belongs to
$\mathcal{M}$. This sum has the form $K_i=\sum_a H_a \wedge Y_a + \sum_b X_b \wedge Y_b, \; H_a \in \ag, \; Y_a, Y_b \in \m_{\omega_i}, \;
X_b \in \m_{2\omega_i}$. Consider a term $X_b \wedge Y_b$ from the second sum. Let $j \ne i$ and let $U \in \h_{\omega_i+\omega_j}$ be nonzero. By Lemma~\ref{l:rr}\eqref{it:rr1} and as $\dim \m_{\omega_i}=\dim \m_{\omega_j}$, the map $\ad_U:\m_{\omega_j} \to \m_{\omega_i}$ is surjective, so there exists $Z_b \in \m_{\omega_j}$ with $[U,Z_b]=Y_b$. As $[X_b,Z_b]=0$, we obtain by \eqref{eq:UXYXUY}, with $X=X_b, \; Y=Z_b$, that
$X_b \wedge Y_b \equiv -[U,X_b] \wedge Z_b \mod(\mathcal{D})$, hence by Fact 1 (as $[U,X_b] \in \m_{\omega_i-\omega_j}$),
$X_b \wedge Y_b \equiv  H_b \wedge Y'_b \mod(\mathcal{D})$, for some $H_b \in \ag, \; Y'_b \in \m_{\omega_i}$. It follows that
$K_i \equiv \sum_c H_c \wedge Y_c \mod(\mathcal{D}), \; H_c \in \ag, \; Y_c \in \m_{\omega_i}$. As $K_i \in \mathcal{M}$, we obtain that
$K_i \in \mathcal{D}$, by Fact 2.

Therefore, we can assume that $K \in \mathcal{M}$ is a linear combination of the terms $X \wedge Y$ such that either $X \in \ag, \; Y \in \m_{2\omega_i}$, or $X, Y \in \m_\a, \; \a \in \Delta$. In view of Fact 3, it suffices to prove that $[\ad_U, K] \in \mathcal{D}$, for any such $K$ and any $U \in \h_\b,\; \b \in \Delta^+$. Now, if $\b=\omega_i \pm \omega_j, \; i \ne j$, then $[\ad_U, K]$ is a sum of the terms of the form $X' \wedge Y', \; X' \in \m_\gamma, Y' \in \m_\delta, \; \gamma \nparallel \delta$ (or $X' \in \ag, \; Y' \in \m_\delta$). This sum still belongs to $\mathcal{M}$, as the latter is an $\h$-module, hence we are done by applying Fact 1 and then Fact 2. Now suppose that $\b=\omega_i$ or $\b=2\omega_i$. Then the same arguments still work, provided we can show that $K \equiv K' \mod(\mathcal{D})$, where $K' \in \mathcal{M}$ is a linear combination of the terms $X \wedge Y$ such that either $X \in \ag, \; Y \in \m_{2\omega_j}, \; j \ne i$, or $X, Y \in \m_\a, \; \a \in \Delta^+ \setminus \{\omega_i, 2\omega_i\}$. To see that, suppose that $K$ contains a term $X \wedge Y, \; X,Y \in \m_{\omega_i}$. Choose $j \ne i$. By Lemma~\ref{l:rr}\eqref{it:rr1} and as $\dim \m_{\omega_i}=\dim \m_{\omega_j}$, the map $\ad_V:\m_{\omega_j} \to \m_{\omega_i}$ is surjective for any nonzero $V \in \h_{\omega_i+\omega_j}$, so there exists $V \in \h_{\omega_i+\omega_j}, \; Z \in \m_{\omega_j}$ such that $[V,Z]=Y$. Now $[X,Z] \in \h_{\omega_i+\omega_j} \oplus \h_{\omega_i-\omega_j}$, so there exist $Z_{\pm} \in \m_{\omega_i\pm\omega_j}$ such that $[X,Z]+[H_+,Z_+]+[H_-,Z_-]=0$, where $H_\pm=(\omega_i+\omega_j)^* \in \ag$, that is, $X \wedge Z + H_+ \wedge Z_+ + H_- \wedge Z_-$ belongs to $\mathcal{M}$, hence to $\mathcal{D}$, by Fact 1 and Fact 2. Taking the bracket with $\ad_V$ we again obtain an element from $\mathcal{D}$, so $X \wedge Y$ is equivalent modulo $\mathcal{D}$ to a linear combination of the terms of the form $H_- \wedge X'$, where $X'=[V, Z_-] \in \m_{2\omega_i} \oplus \m_{2\omega_j}$, and $X_a \wedge Y_a$, where $X_a=[V,H_+], Y_a=Z_+ \in \m_{\omega_i+\omega_j}$ or $X_a=[V,X], Y_a=Z \in \m_{\omega_j}$. Repeatedly using this argument, for every term $X \wedge Y, \; X,Y \in \m_{\omega_i}$, from $K$, we obtain that $K \equiv K_1 \mod(\mathcal{D})$, where $K_1 \in \mathcal{M}$ is a linear combination of the terms $X \wedge Y$ such that either $X \in \ag, \; Y \in \m_{2\omega_j}$ (including $j = i$), or $X, Y \in \m_\a, \; \a \in \Delta^+ \setminus \{\omega_i\}$. Next, suppose that $K$ contains a term $X \wedge Y, \; X,Y \in \m_{2\omega_i}$. Choose $j \ne i$ and take $Z \in \m_{\omega_i+\omega_j}, \; V \in \h_{\omega_i-\omega_j}$. Then $[X,Z] \in \h_{\omega_i-\omega_j}$, so there exist $Z_- \in \m_{\omega_i-\omega_j}$ such that $[X,Z]+[(\omega_i-\omega_j)^*,Z_-]=0$, that is, $X \wedge Z + (\omega_i-\omega_j)^* \wedge Z_-$ belongs to $\mathcal{M}$, hence to $\mathcal{D}$, by Fact 1 and Fact 2. Taking the bracket with $\ad_V$ we again obtain an element from $\mathcal{D}$, so $X \wedge [V,Z]$ is equivalent modulo $\mathcal{D}$ to a linear combination of the terms $[X,V] \wedge Z$, with $[X,V], Z \in \m_{\omega_i+\omega_j}$ and $\theta_{\omega_i-\omega_j}V \wedge Z_-$, with $\theta_{\omega_i-\omega_j}V, Z_- \in \m_{\omega_i-\omega_j}$. It follows that $X \wedge [\h_{\omega_i-\omega_j}, \m_{\omega_i+\omega_j}] \subset (\m_{\omega_i+\omega_j} \wedge \m_{\omega_i+\omega_j}) \oplus (\m_{\omega_i-\omega_j} \wedge \m_{\omega_i-\omega_j}) \oplus \mathcal{D}$. But
$[\h_{\omega_i-\omega_j}, \m_{\omega_i+\omega_j}] \subset \m_{2\omega_i}+\m_{2\omega_j}$, and $\pi_{\m_{2\omega_i}}[\h_{\omega_i-\omega_j}, \m_{\omega_i+\omega_j}]=\m_{2\omega_i}$ (otherwise, there were a nonzero $W \in \m_{2\omega_i}$ such that $[W,\m_{\omega_i+\omega_j}] \perp \h_{\omega_i-\omega_j}$, which contradicts Lemma~\ref{l:rr}\eqref{it:rr1}). It follows that there exists $Y' \in \m_{2\omega_j}$ such that $X \wedge (Y+Y') \in (\m_{\omega_i+\omega_j} \wedge \m_{\omega_i+\omega_j}) \oplus (\m_{\omega_i-\omega_j} \wedge \m_{\omega_i-\omega_j}) \oplus \mathcal{D}$. But $X \wedge Y' \in \mathcal{D}$, as $[X,Y']=0$, so $X \wedge Y$ is equivalent modulo $\mathcal{D}$ to an element of $(\m_{\omega_i+\omega_j} \wedge \m_{\omega_i+\omega_j}) \oplus (\m_{\omega_i-\omega_j} \wedge \m_{\omega_i-\omega_j})$. Repeatedly using this argument, for every term $X \wedge Y, \; X,Y \in \m_{2\omega_i}$, from $K$, we obtain that $K \equiv K_2 \mod(\mathcal{D})$, where $K_2 \in \mathcal{M}$ is a linear combination of the terms $X \wedge Y$ such that either $X \in \ag, \; Y \in \m_{2\omega_j}$ (including $j = i$), or $X, Y \in \m_\a, \; \a \in \Delta^+ \setminus \{\omega_i, 2\omega_i\}$. But the only terms $X \wedge Y$ in $K_2$ such that $\db(X \wedge Y) \in \h_{2\omega_i}$, are the terms with $X \in \ag, \; Y \in \m_{2\omega_i}$. As $K \in \mathcal{M}$, the sum of all these terms appearing in $K$ also belongs to $\mathcal{M}$, hence to $\mathcal{D}$, by Fact 2. So $K \equiv K' \mod(\mathcal{D})$, where $K' \in \mathcal{M}$ is a linear combination of the terms $X \wedge Y$ such that either $X \in \ag, \; Y \in \m_{2\omega_j}$ (with $j \ne i$), or $X, Y \in \m_\a, \; \a \in \Delta^+ \setminus \{\omega_i, 2\omega_i\}$, as required.
\end{proof}

Note that for complex symmetric spaces, assertion~\ref{it:A[X,Y]} of Lemma~\ref{l:rkge3} follows from assertion~\ref{it:phi0} by \cite[Proposition~4.3]{Pan}. It is not however immediately clear how to carry over this result to the real case, as the commuting variety in the complex case can be reducible and can be strictly bigger than the (Zariski or Euclidean) closure of $\ag \times \ag$ \cite{PY} (the simplest example is the complex projective space).

\begin{replemma}{l:Amh}
Let $M_0$ be an irreducible compact symmetric space. Suppose that either $\rk M_0 \ge 2$ or $M_0=\Oc P^2$. Let a linear map $A: \m \to \h$ satisfy
\begin{equation}\label{eq:Amhdef}
    \sigma_{XYZ}\<[X, Y],AZ\>=0,
\end{equation}
for all $X, Y, Z \in \m$. Then there exists $T \in \m$ such that $A=\ad_T$.
\end{replemma}

\begin{proof}
Clearly, the $\h$-submodule of those $A \in \Hom(\m, \h)$ which satisfy \eqref{eq:Amhdef} contains the submodule $\ad_\m$, by the Jacobi identity. We want to show that they coincide.

First consider the case when $\rk M_0 \ge 2$. Let $\ag \subset \m$ be a Cartan subspace.

\smallskip

\emph{Step 1. For any Cartan subspace $\ag \subset \m$ there exists $T' \in \m$ such that for all $\a \in \Delta$, the operator $A'=A+\ad_{T'}:\m \to \h$ satisfies}
\begin{equation}\label{eq:AT'} 
    A'\ag \subset \h_0, \qquad A'\m_\a \subset \bigoplus\nolimits_{\b \in \Delta, \b \parallel \a}\h_\b.
\end{equation}

Taking $X, Y \in \ag, \; Z \in \m_\a$ in \eqref{eq:Amhdef} we get $\a(Y)\<\theta_\a Z, AX\>=\a(X)\<\theta_\a Z, AY\>$, so for any $\a \in \Delta^+$,
there exists $U_\a \in \h_\a$ such that for all $X \in \ag, \quad \pi_{\h_\a}AX=\a(X) U_\a$. Define $T'=\sum_{\a \in \Delta^+} \theta_\a^{-1}U_\a \in \m$. Then for all $X \in \ag, \quad (A+\ad_{T'})X \subset \h_0$. This proves the first formula of \eqref{eq:AT'}. Note that the map $A'=A+\ad_{T'}$ still satisfies \eqref{eq:Amhdef}.

Taking now $X \in \ag, \; Y \in \m_\a, \; Z \in \m_\b$ in \eqref{eq:Amhdef}, with $A$ replaced by $A'$, we obtain $\a(X)\<\theta_\a Y, A'Z\>= \b(X)\<\theta_\b Z, A'Y\>$. If $\b \not\parallel \a$, we can choose $X \in \ag$ such that $\a(X)=0 \ne \b(X)$. Then $A'\m_\a \subset \oplus_{\b \in \Delta, \b \parallel \a}\h_\b \oplus \h_0$, for all $\a \in \Delta$. To prove the second inclusion of \eqref{eq:AT'}, we need to show that there is no $\h_0$-component on the right-hand side.

This is trivially true, if $\rk M_0=\rk \g$, as then $\h_0 =0$. Otherwise, suppose that for some $Z \in \m_\a$, the vector $U=\pi_{\h_0} A'Z$ is
nonzero. Then taking $X, Y \in \m_\b, \; \a \ne \pm \b, \pm 2\b$, in \eqref{eq:Amhdef}, with $A$ replaced by $A'$, we get $\<[X,Y],U\>=0$, so $U \perp [\m_\b,\m_\b]$. As $U \in \h_0$ and as $\m_\b$ is an $\h_0$-module, we have $[U, \m_\b]=0$, hence $[U, \h_\b]=0$. Let $\gamma$ be one of the shortest roots proportional to $\a$, so that $\gamma=\pm \a$ or $\gamma = \pm\frac12 \a$. Then for all $\b \nparallel \a$, we have $\pm \gamma \nparallel \b$, so $0=[U,[\m_\b,\m_\gamma]]=[\m_\b,[U,\m_\gamma]]$.

First suppose that $[U,\m_\gamma]=0$. If $2\gamma \notin \Delta$, then $[U, \m_\b]=0$, for all $\b \nparallel \a$ and for all $\b \parallel \a$. As $U \in \h_0$, we also have $[U, \ag]=0$, so $[U, \m]=0$, a contradiction. If $2\gamma \in \Delta$, then $[U,\h_\gamma]=0$, hence $[U, [\m_\gamma, \m_\gamma] +[\h_\gamma,\h_\gamma]]=0$. But $[\m_\gamma,\m_\gamma]+[\h_\gamma,\h_\gamma] \supset \h_{2\gamma}$ by \cite[Section~3.2]{NT}, so $[U, \h_{2\gamma}]=0$, hence $[U, \m_{2\gamma}]=0$. This again implies $[U, \m]=0$, a contradiction.

Suppose now that $[U,\m_\gamma] \ne 0$. Let $X \in [U,\m_\gamma] \subset \m_\gamma$ be nonzero. We have $[X, \m_\b]=0$, for all $\b \nparallel \a$, so by Lemma~\ref{l:rr}\eqref{it:rr1}, every root not proportional to $\a$ is orthogonal to $\a$, hence $\Delta$ is a union of two nonempty orthogonal subsets, which contradicts the fact that $M_0$ is irreducible.

This proves the second formula of \eqref{eq:AT'}.

\smallskip

\emph{Step 2. For a Cartan subspace $\ag \subset \m$, define $T' \in \m$ and $A'=A+\ad_{T'}:\m \to \h$, as in Step 1. Then for any $\a \in \Delta$
such that $\br\a \cap \Delta = \pm \a$, we have $A'\a^*=0$ \emph{(}where $\a^* \in \ag$ is dual to $\a$\emph{)} and there exists
$c_\a \in \br$ such that $A'X=c_\a \theta_\a X$, for all $X \in \m_\a$}.

Denote $\tilde \m_a =\m_\a \oplus \br \a^*$. Then by \cite[Lemma~2.25]{Nag}, $\tilde \m_a$ is a Lie triple system tangent to a totally geodesic submanifold of constant positive curvature and moreover, for $X, Y \in \tilde \m_\a$, the map $\iota: X \wedge Y \mapsto \|\a\|^{-2}[X,Y]$ is an isomorphism of Lie algebras $\so(\tilde \m_\a)$ and $[\tilde \m_\a,\tilde \m_\a]= \h_\a \oplus [\m_\a,\m_\a]$ (note that the latter subspace lies in $\h_0$), and for $U \in [\tilde \m_\a,\tilde \m_\a], \; X \in \tilde \m_\a$, we have $[U, X] = \iota^{-1}U (X)$. 

For any nonzero $X \in \tilde \m_a$, the subspace $\ag_X=\Ker \a \oplus \br X$ is a Cartan subspace, with $\m_{\a,X}=\tilde \m_\a \cap X^\perp$ and
$\h_{\a,X}=[X,\m_{\a,X}]$ the root spaces. Then by Step 1 applied to $\ag_X$ and $A'$, there exists $T'(X) \in \m$, with $T'(\a^*)=0$, such that
the map $A'+\ad_{T'(X)}$ satisfies \eqref{eq:AT'}, that is, $[(A'+\ad_{T'(X)})\ag_X,\ag_X]=0$ and
$(A'+\ad_{T'(X)})\m_{\a,X} \subset \h_{\a,X}=[X,\m_{\a,X}]$. From the first equation, it follows that $[(A'+\ad_{T'(X)})\Ker \a,\Ker \a]=0$, for all
$X \in \tilde \m_a$, which (as $T'(\a^*)=0$) implies $[[T'(X),\Ker \a],\Ker \a]=0$. As the only roots proportional to $\a$ are $\pm \a$, this implies
$T'(X) \subset \Ker \a \oplus \tilde \m_\a$. Then from the above,
$(A'+\ad_{T'(X)})\m_{\a,X} \subset \h_{\a,X}=[X,\m_{\a,X}]=[X,\tilde \m_\a]$, so for any $Y \in \tilde \m_\a, \; Y \perp X$, we get
$A'Y+[T'(X),Y] \in [X,\tilde \m_\a]$, so $A'Y \in [X,\tilde \m_\a]-[T'(X),Y]=[X,\tilde \m_\a]-[S(X),Y]$, where $S(X)$ is the $\m_{\a, X}$-component
of $T'(X)$ (so $S(X) \in \tilde \m_\a, \; S(X) \perp X$). It follows that $A'Y \in [\tilde \m_\a,\tilde \m_\a]$, which is
isomorphic to the Lie algebra $\so(\tilde \m_\a)$ via the isomorphism $\iota: X \wedge Y \mapsto \|\a\|^{-2}[X,Y]$. Therefore for all
$X, Y \in \tilde \m_\a, \;  X \perp Y, \; X \ne 0$,
\begin{equation}\label{eq:iota}
\iota^{-1}A'Y = \|\a\|^2(X \wedge F(X,Y) - S(X) \wedge Y),
\end{equation}
where $S(X), F(X,Y) \in \tilde \m_\a, \; S(X) \perp X$ and $S(\a^*)=0$ (as $T'(\a^*)=0$). Moreover, from the fact that
$[(A'+\ad_{T'(X)})\ag_X,\ag_X]=0$ we obtain that $[A'X+[T'(X),X],X]=0$, for all nonzero $X \in \tilde \m_\a$. As
$[T'(X),X]=[S(X),X]$, we get $\iota^{-1}(A'X+[S(X),X])X=0$, so for all $X \in \tilde \m_\a$,
\begin{equation}\label{eq:iotaxx}
\iota^{-1}A'X(X)=-\|\a\|^2 (S(X) \wedge X)X= \|\a\|^2 \| X\|^2 S(X),
\end{equation}
by the definition of $\iota$ and from the fact that $S(X) \perp X$.

Now, if $m_\a (= \dim \m_\a)=1$, then the second statement of Step 2 follows trivially. To prove the first statement, take $Y = \a^*$ in
\eqref{eq:iota}. As $\dim \tilde \m_\a =2$, we get $\iota^{-1}A'\a^* = c X \wedge \a^*$, for some $c \in \br$, where $X$ spans $\m_\a$. But then
by \eqref{eq:iotaxx}, $-c\|\a\|^2 X=\iota^{-1}A'\a^*(\a^*) = 0$, as $S(\a^*)=0$, so $c = 0$, that is, $A'\a^* = 0$.

If $m_\a > 2$, then \eqref{eq:iota} implies $\<\iota^{-1}A'Y (Z_1) , Z_2\>=0$, for any $Z_1, Z_2 \in \tilde \m_\a, \; Z_1,Z_2 \perp X, Y$, hence
for any $Z_1, Z_2 \in (\tilde \m_\a \cap Y^\perp)$. Taking $Z \in \tilde \m_\a, \; Z \perp X, Y$, we obtain from \eqref{eq:iota} that
$0=\<\iota^{-1}A'Y (X),Z\> = \|\a\|^2 \|X\|^2 \<F(X,Y), Z\>$, so $F(X, Y) \in \Span(X, Y)$. It now follows from \eqref{eq:iota} that for all
$Y \in \tilde \m_\a$, $\iota^{-1}A'Y \in \tilde \m_\a \wedge Y$, so by linearity, there exists $P \in \tilde \m_a$ such that
$\iota^{-1}A'Y = P \wedge Y$. But then from the fact that $S(\a^*)=0$ we obtain by \eqref{eq:iotaxx} that
$0=\iota^{-1}A'\a^*(\a^*) = (P \wedge \a^*)\a^*$, so $P= c \a^*$, for some $c \in \br$. Then $A'\a^* = 0$ and
$A'Y=\iota(c\a^* \wedge Y)=c\|\a\|^{-2}[\a^*,Y]=c\|\a\|^{-2} \theta_\a Y$, for all $Y \in \m_\a$, as required.

Finally, if $m_\a=2$, then $\dim \tilde \m_\a = 3$, so the Lie algebra $\so(\tilde\m_\a)$ is isomorphic to $\tilde \m_\a$ with the cross-product,
with the isomorphism $v$ defined by $v(X_1 \wedge X_2)=X_1 \times X_2$. Acting by $v$ on the both sides of \eqref{eq:iota} and introducing
$w \in \End(\tilde \m_\a)$ by $wY=v(\iota^{-1}A'Y)$ we get $wY = \|\a\|^2(X \times F(X,Y) - S(X) \times Y)$, for all $X \perp Y, \; X \ne 0$, so
$\<wY, X\> = \|\a\|^2 \<S(X) \times X,  Y\>$, which implies $w^tX= \|\a\|^2 S(X) \times X+f(X)X$, where $f:\tilde \m_\a \to \br$ and $w^t$ is the
operator adjoint to $w$. Then $S(X)=\|\a\|^{-2}\|X\|^{-2} w^tX \times X$, as $S(X) \perp X$. On the other hand, from \eqref{eq:iotaxx} we obtain
$\|\a\|^2 \| X\|^2 S(X) = \iota^{-1}A'X(X)= wX \times X$, so $(w^t+w)X \times X = 0$. It follows that $w^t+w= 2 c \, \id$, for some $c \in \br$,
so $wX=cX+P \times X$ for some $P \in \tilde \m_\a$. Then $S(X)=\|\a\|^{-2}\|X\|^{-2} w^tX \times X=-\|\a\|^{-2}\|X\|^{-2} (P \wedge X) X$. As
$S(\a^*)=0$, we get $P=c_1 \a^*$ for some $c_1 \in \br$. Then $v(\iota^{-1}A'X)=wX=cX+c_1 \a^* \times X$, so, by the definition of $v$ and of $\iota$, $[A'X,Y]=(\iota^{-1}A'X)(Y)=cX \times Y +c_1 (\a^* \wedge X)Y$, for any $X, Y \in \tilde \m_\a$. Then $\<[A'X,Y], Z\>= c(X,Y,Z) +c_1 (\<\a^*, Y\> \<X,Z\>-\<\a^*, Z\> \<X,Y\>)$, for all $X, Y, Z \in \tilde \m_\a$, where $(X,Y,Z)$ is the triple product in the three-dimensional Euclidean space $\tilde \m_\a$. Taking the cyclic sum by orthonormal $X, Y, Z \in \tilde \m_\a$ and using the fact that $A'$ satisfies \eqref{eq:Amhdef} we get $c=0$. Then $\iota^{-1}A'X=c_1 \a^* \wedge X$, so $A'\a^*=0$ and $A'X = c_1 \theta_\a X$, for $X \in \m_\a$, as required.

\smallskip

\emph{Step 3. For a Cartan subspace $\ag \subset \m$, define $T' \in \m$ and $A'=A+\ad_{T'}:\m \to \h$, as in Step 1. Then $A'\ag=0$ and there exists
a linear form $c$ on $\ag^*$ such that for all $\a \in \Delta, \quad A'X=c(\a) \theta_\a X$, for all $X \in \m_\a$}.

First suppose that the root system $\Delta$ is reduced. Then the first statement of Step 3 immediately follows from Step 2. Also, from Step 2 we know that for every $\a \in \Delta$, there exists a constant $c_\a$ such that $A'_{|\m_\a}=c_\a \theta_\a \id_{|\m_\a}$. It remains to show that the function $\a \mapsto c_\a$ is a restriction of a linear form on $\ag^*$ to $\Delta$. Choose a subsystem $\Delta^+$ of positive roots and let $\a_1, \dots, \a_r \in \Delta^+, \; r = \rk M_0$, be a basis of simple roots. Then for every $\b \in \Delta^+$ we have $\beta=\sum_{i=1}^r n_i \a_i$, with all $n_i$ being nonnegative integers. We will show that for every $\b \in \Delta^+, \quad c_\beta=\sum_{i=1}^r n_i c_{\a_i}$ by induction by $h(\b)=\sum_{i=1}^r n_i$, the height of $\b$. For the roots of height one (for simple roots), this is trivial. Suppose that for all the roots of height less than $h_0 \ge 2$ the above equation holds. Let $h(\b)=h_0$. Then $\<\b, \a\> > 0$ for some simple root $\a =\a_i$ (otherwise $\<\b,\b\> \le 0$), so $\b=\gamma+\a$ for some $\gamma \in \Delta^+$ (note that $h(\gamma)=h_0-1$) and the $\h_\gamma$-component of $[\m_\b, \m_\a]$ is nonzero by Lemma~\ref{l:rr}\eqref{it:rr2}. Then we can choose $X_\a \in \m_\a, \; X_\b \in \m_\b$ and $X_\gamma \in \m_\gamma$ in such a way that $\<[X_\a, X_\b], \theta_\gamma X_\gamma\> \ne 0$. By Lemma~\ref{l:rr}\eqref{it:rr3}, $-\<[X_\gamma, X_\a], \theta_\b X_\b\>=\<[X_\b, X_\gamma], \theta_\a X_\a\> =\<[X_\a, X_\b], \theta_\gamma X_\gamma\>$. Substituting such $X_\a, X_\b, X_\gamma$ into \eqref{eq:Amhdef}, with $A$ replaced by $A'$, we get $\<[X_\a, X_\b], \theta_\gamma X_\gamma\>(c_\gamma+c_\a-c_\b)=0$, so $c_\b=c_\gamma+c_\a$, as required. The fact that $c_{-\a}=-c_\a$ now follows from the fact that $\theta_{-\a}=-\theta_\a$. This proves the second statement of Step 3 for a reduced system $\Delta$.

Now consider the case of a non-reduced root system. Every such system is of type $\mathrm{BC}_r$, so that $\Delta=\{\pm \omega_i, \pm 2\omega_i, \pm \omega_i \pm \omega_j\},\; 1 \le i < j \le r$. The first statement of Step 3 now follows by linearity from the first statement of Step 2 applied to the roots $\pm \omega_i \pm \omega_j, \; i<j$. From the second statement of Step 2 we also obtain that for all $\a=\pm\omega_i\pm\omega_j, \; i \ne j$, there exists $c_\a \in \br$ with $A'X = c_\a \theta_\a X$, for all $X \in \m_\a$ (note that $c_{-\a}=-c_\a$, as $\theta_{-\a}=-\theta_\a$). Substituting $X \in \m_{\omega_i+\omega_j},\; Y \in \m_{\omega_i-\omega_j},\; Z \in \m_{\omega_i}, \; i \ne j$, into \eqref{eq:Amhdef}, with $A$ replaced by $A'$, we get $\<[X,Y], A'Z\> =0$, which implies $\<[X,Y], \pi_{2\omega_i}A'Z\> =0$ by \eqref{eq:AT'}. Let $V=\theta_{2\omega_i}^{-1}\pi_{2\omega_i}A'Z \in \m_{2\omega_i}$. Then by Lemma~\ref{l:rr}\eqref{it:rr3} we obtain $\<[V,X], \theta_{\omega_i-\omega_j}Y\> =0$, which implies that $V=0$ by Lemma~\ref{l:rr}\eqref{it:rr1}. It follows that $\pi_{2\omega_i} A'\m_{\omega_i} =0$, so $A'\m_{\omega_i} \subset \h_{\omega_i}$ by \eqref{eq:AT'}. Taking now $X \in \ag, \; Y \in \m_{\omega_i}, \; Z \in \m_{2\omega_i}$ in \eqref{eq:Amhdef}, with $A$ replaced by $A'$, we obtain $\<\theta_{\omega_i} Y, A'Z\>=0$, so $A'\m_{2\omega_i} \subset \h_{2\omega_i}$ by \eqref{eq:AT'}. Substituting $X \in \m_{\omega_i+\omega_j},\; Y \in \m_{\omega_i-\omega_j},\; Z \in \m_{2\omega_i}, \; i \ne j$, into \eqref{eq:Amhdef}, with $A$ replaced by $A'$, and using Lemma~\ref{l:rr}\eqref{it:rr3} we get $\<[X,Y], (A'-(c_{\omega_i+\omega_j}+c_{\omega_i-\omega_j})\theta_{2\omega_i})Z\> =0$, so $\<[(A'-(c_{\omega_i+\omega_j}+c_{\omega_i-\omega_j})\theta_{2\omega_i})Z,X], Y\>=0$. As we already know that
$(A'-(c_{\omega_i+\omega_j}+c_{\omega_i-\omega_j})\theta_{2\omega_i})Z \in \h_{2\omega_i}$ and as $2\omega_i+(\omega_i+\omega_j)$
is not a root, it follows that $0=[(A'-(c_{\omega_i+\omega_j}+c_{\omega_i-\omega_j})\theta_{2\omega_i})Z,X]
=[\theta_{2\omega_i}(\theta_{2\omega_i}^{-1} A'-(c_{\omega_i+\omega_j}+c_{\omega_i-\omega_j})\id)Z,X]$, for all
$X \in \m_{\omega_i+\omega_j},\; Z \in \m_{2\omega_i}, \; i \ne j$. By Lemma~\ref{l:rr}\eqref{it:rr1} we obtain
$(\theta_{2\omega_i}^{-1} A'-(c_{\omega_i+\omega_j}+c_{\omega_i-\omega_j})\id)Z=0$, so
$A'Z=(c_{\omega_i+\omega_j}+c_{\omega_i-\omega_j})\theta_{2\omega_i}Z$, for all $Z \in \m_{2\omega_i}$. It follows that there exist $c_i \in \br$
such that $A'Z=2 c_i \theta_{2\omega_i}Z$, for all $Z \in \m_{2\omega_i}$, and $A'Z=(\ve_1 c_i+\ve_2 c_j)\theta_{\ve_1 \omega_i+\ve_2 \omega_j}Z$,
for all $Z \in \m_{\ve_1 \omega_i+\ve_2 \omega_j}, \; \ve_1, \ve_2 = \pm 1$. To finish the proof, it remains to show that for all $i=1, \dots, r$,
we have $A'X=c_i \theta_{\omega_i}X$, for all $X \in \m_{\omega_i}$.

Substituting $X, Y \in \m_{\omega_i}, \; Z \in \ag$ into \eqref{eq:Amhdef}, with $A$ replaced by $A'$, and using the first statement of this step we get $\<\theta_{\omega_i}X, A'Y\>=\<\theta_{\omega_i}Y, A'X\>$, so there exist symmetric endomorphisms $S'_i \in \Sym(\m_{\omega_i})$ such that $A'X=\theta_{\omega_i} S'_iX$, for all $X \in \m_{\omega_i}$. Denote $S_i=S_i'-c_i \id$. Substituting $X, Y \in \m_{\omega_i}, \; Z \in \m_{2\omega_i}$ into \eqref{eq:Amhdef}, with $A$ replaced by $A'$, we obtain $2c_i\<[X,Y],\theta_{2\omega_i}Z\>+\<[Y,Z],\theta_{\omega_i}S'_iX\>+\<[Z,X],\theta_{\omega_i}S'_iY\>=0$, so
$c_i\<[X,Y],[\omega_i^*,Z]\>+\<[Y,Z],[\omega_i^*,S'_iX]\>+\<[Z,X],[\omega_i^*,S'_iY]\>=0$, which by the Jacobi identity implies
$\<[Y,Z],[\omega_i^*,S_iX]\>+\<[Z,X],[\omega_i^*,S_iY]\>=0$, that is, the operator
$(\ad_Z\ad_{\omega_i^*})_{|\m_{\omega_i}}S_i \in \End(\m_{\omega_i})$ is symmetric. But the operator
$(\ad_Z\ad_{\omega_i^*})_{|\m_{\omega_i}} \in \End(\m_{\omega_i})$ is skew-symmetric. Indeed, for any $X \in \m_{\omega_i}$ we have
$[[[\omega_i^*,X],X],\omega_i^*]=-[[[X,\omega_i^*],\omega_i^*],X]=0$, so $[[\omega_i^*,X],X] \parallel \omega_i^*$, as
$\br\omega_i^* \oplus \m_{\omega_i} \oplus \m_{2\omega_i}$ is a Lie triple system tangent to a rank one symmetric space. So
$\<[[\omega_i^*,X],X], Z\>=0$, for any $Z \in \m_{2\omega_i}$, which implies that $(\ad_Z\ad_{\omega_i^*})_{|\m_{\omega_i}} \in \End(\m_{\omega_i})$
is skew-symmetric, hence $(\ad_{[Z,\omega_i^*]})_{|\m_{\omega_i}}=2(\ad_Z\ad_{\omega_i^*})_{|\m_{\omega_i}}$. It follows that for all $U \in \h_{2\omega_i}$, the operator $\ad_{U|\m_{\omega_i}}S_i \in \End(\m_{\omega_i})$ is symmetric, so $\ad_{U|\m_{\omega_i}}S_i = -S_i\ad_{U|\m_{\omega_i}}$. Therefore, for every eigenvalue $\lambda$ of $S_i$, with the corresponding eigenspace $E(\la) \subset \m_{\omega_i}$, $-\lambda$ is also an eigenvalue and moreover, $[U,E(\la)] =E(-\la)$, for any nonzero $U \in \h_{2\omega_i}$ (note that the restriction of $\ad_U$ to $\m_{\omega_i}$ is onto). Now, the dimension $m_{2\omega_i}$ can be only $1, 3$ or $7$. 
In the latter case, $M_0$ is the Cayley projective plane, which is of rank one. If $m_{2\omega_i}=3$, the action of $\h_{2\omega_i}$ defines a
quaternionic structure on $\m_{\omega_i}$, so, with an appropriate choice of $U_1,U_2,U_3 \in \h_{2\omega_i}$, the restriction of $\ad_{U_1}\ad_{U_2}\ad_{U_3}$ to $\m_{\omega_i}$ is the identity. As each of them permutes the eigenspaces $E(\la)$ and $E(-\la)$, we get $S_i=0$. Consider the case $m_{2\omega_i}=1$ (then the space $M_0$ is Hermitian).

Substituting $X \in \m_{\omega_i}, \; Y \in \m_{\omega_j}, Z \in \m_{\omega_i+\omega_j},\; i \ne j$, into \eqref{eq:Amhdef}, with $A$ replaced by $A'$, and using Lemma~\ref{l:rr}\eqref{it:rr3} we obtain $\<[S_iX,Y]+[X,S_jY], \theta_{\omega_i+\omega_j}Z\>=0$, that is, $[S_iX,Y]+[X,S_jY] \in \h_{\omega_i-\omega_j}$. Similarly, taking $Z \in \m_{\omega_i-\omega_j}$ we get $[S_iX,Y]-[X,S_jY] \in \h_{\omega_i+\omega_j}$. It follows that for the eigenspaces $E(\la_a) \subset \m_{\omega_i}, \; E(\mu_b) \in \m_{\omega_j}$ of the operators $S_i, S_j$, respectively, with the corresponding eigenvalues $\la_a, \mu_b$, we have
\begin{equation}\label{eq:EE}
(\la_a+\mu_b)[E(\la_a),E(\mu_b)] \subset \h_{\omega_i-\omega_j}, \quad (\la_a-\mu_b)[E(\la_a),E(\mu_b)] \subset \h_{\omega_i+\omega_j}.
\end{equation}
Suppose $\la_a \ne 0$ and let $X \in E(\la_a)$ be nonzero. Then for all $Y \in E_a:=\oplus_{\mu_b \ne-\la_a} E(\mu_b) \subset \m_{\omega_j}$, we have
$[X, Y] \in \h_{\omega_i-\omega_j}$, so $\<[X, E_a], \h_{\omega_i+\omega_j}\>=0$, that is, $\ad_X \h_{\omega_i+\omega_j} \perp E_a$. As the map
$\ad_X: \h_{\omega_i+\omega_j} \to \m_{\omega_j}$ is surjective by Lemma~\ref{l:rr}\eqref{it:rr1}, we obtain
$m_{\omega_i+\omega_j}+\dim E_a \le m_{\omega_j}$. But $2\dim E_a \ge m_{\omega_j}$ (as it is shown in the previous paragraph, for every eigenspace
$E(\mu_b) \subset \m_{\omega_j}, \; \mu_b \ne 0$ of $S_j$, there is an eigenspace $E(-\mu_b) \subset \m_{\omega_j}$ of the same dimension). Then
$2m_{\omega_i+\omega_j} \le m_{\omega_j}$. Inspecting the multiplicities of the restricted roots from the Satake diagrams 
we obtain that each of $S_i$ is zero in all the cases, except possibly, for the complex Grassmannian
$M_0=SU(p+q)/S(U(p)\times U(q)),\; p>q>1$. In the latter case, an easy direct computation of the Lie brackets shows that
for $X \in \m_{\omega_i}, \; Y \in \m_{\omega_j}$, we have $[X,Y] \subset \h_{\omega_i-\omega_j} \cup \h_{\omega_i+\omega_j} \iff [X,Y]=0$. Then
from \eqref{eq:EE}, $[E(\la_a),E(\mu_b)]=0$, unless $\la_a=\mu_b=0$. Therefore, if $\la_a \ne 0$ and $X \in E(\la_a)$ is nonzero, we get
$[X, \m_{\omega_j}]=0$, so $\<[X, \m_{\omega_j}], \h_{\omega_i+\omega_j}\>=0$, which implies $\ad_X\h_{\omega_i+\omega_j}=0$, a contradiction
with Lemma~\ref{l:rr}\eqref{it:rr1}. It follows that all the operators $S_i$ vanish.

Thus in all the cases $S_i=0$, so, from the definition of the $S_i$'s we get $A'X=c_i \theta_{\omega_i}X$, for all $X \in \m_{\omega_i}$, as required.

\smallskip

The claim of the lemma now follows, as by Step 3, there exists $c \in \ag$ such that $A'=\ad_c$, hence $A=\ad_{c-T'}$.

\medskip

Now consider the case $M_0=\Oc P^2$. For $X, Y \in \m$, we have
\begin{equation}\label{eq:Rop2}
\begin{gathered}
\ad_{[X,Y]}=3 X \wedge Y + \sum\nolimits_{i=0}^8 (S_i X) \wedge (S_i Y) = 3 X \wedge Y + \sum\nolimits_{i=0}^8 S_i (X \wedge Y) S_i, \\
\text{where } S_i^* = S_i, \text{ and } S_iS_j+S_jS_i=2 \K_{ij} \id, \text{ for all } 0 \le i, j \le 8
\end{gathered}
\end{equation}
(see \cite{Fr1} or \cite[Section~2.3]{Nampa}, where the operators $S_i$ are given explicitly). The operators $S_iS_j, S_iS_jS_k$, $i <j < k$, are skew-symmetric and form a basis for $\so(16)$ (which is orthonormal, if we replace every $S_i$ by $\frac14 S_i$). The isotropy representation of $\h=\so(9)=\Lambda^2 \br^9$ is the spin representation defined by $u_i \wedge u_j \mapsto S_iS_j$, where $u_i, \; 0 \le i \le 8$, is an orthonormal basis for $\br^9$. The irreducible decomposition of the $\h$-module $\so(16)$ is given by $\so(16)=\Lambda^2 \br^9 \oplus \Lambda^3 \br^9$, where $\Lambda^3 \br^9=\Span_{i < j < k}(S_iS_jS_k)$. This decomposition is orthogonal and moreover, by \eqref{eq:Rop2}, $\ad_{[X,Y]}=8\pi_2(X \wedge Y)=\sum_{i<j} \<S_iS_jX, Y\> S_iS_j$, where $\pi_2$ is the orthogonal projection to the submodule $\Lambda^2 \br^9 \subset \so(16)$.

By the assumption, a linear map $A: \m \to \h$ satisfies $\sigma_{XYZ}\<[X, Y],AZ\>=0$, for all $X, Y, Z \in \m$, so $0=\sigma_{XYZ}\<\pi_2(X \wedge Y), AZ\>=\sigma_{XYZ}\<X \wedge Y, \pi_2(AZ)\>=\sigma_{XYZ}\<X \wedge Y, AZ\>$. Here $A$ can be viewed as an element of the $\h$-module $\Lambda^2 \br^9 \otimes \m$, and then the assumption of the lemma means that $A \in \Ker \Xi$, where $\Xi: \Lambda^2 \br^9 \otimes \m \to \Lambda^3\m$ is the homomorphism of $\h$-modules defined by $\Xi((u_i \wedge u_j)\otimes a)=a \wedge (S_iS_j)$, for $\a \in \m, \; u_i, u_j \in \br^9$. The irreducible decomposition of the both modules are known (\cite[Section~7]{Fr1} and \cite{Slu}). Define the $\h$-homomorphisms $\Theta_k: \Lambda^k \br^9 \otimes \m \to \Lambda^{k+1} \br^9 \otimes \m, \; \Theta_k^*: \Lambda^k \br^9 \otimes \m \to \Lambda^{k-1} \br^9 \otimes \m $ by
\begin{equation}\label{eq:modules}
\Theta_k (\omega\otimes a)=\sum\nolimits_{i=0}^8 (u_i \wedge \omega) \otimes S_ia, \qquad \Theta_k^* (\omega\otimes a)=-\sum\nolimits_{i=0}^8 (u_i \lrcorner \omega) \otimes S_ia
\end{equation}
and denote $P_k = \Ker \Theta_k^*$. Then we have irreducible decompositions
\begin{equation}\label{eq:spin}
\Lambda^2 \br^9 \otimes \m =\Theta_1 \Theta_0(P_0) \oplus \Theta_1 (P_1) \oplus P_2, \qquad \Lambda^3\m \simeq P_1 \oplus P_2,
\end{equation}
with $\Theta_1 \Theta_0: P_0 \to \Theta_1 \Theta_0(P_0)$ and $\Theta_1: P_1 \to \Theta_1 (P_1)$ being isomorphisms on their images. Now $P_0=\m$ and for $T \in \m$ we have $\Theta_1 \Theta_0(T)=\sum_{ij}S_iS_jT \otimes (u_i \wedge u_j)$ and so $(\Xi\Theta_1 \Theta_0(T))(X,Y,Z)=\sigma_{XYZ}\sum_{ij} \<S_iS_jT, X\>\<S_iS_jY, Z\>=\sigma_{XYZ}\sum_{ij} \<\ad_{[Y, Z]}T, X\>=0$, by the Jacobi identity. It follows that $\Ker \Xi \supset \Theta_1 \Theta_0(P_0) \simeq \m$, so to prove the lemma it suffices to show that $\Xi$ maps the remaining two irreducible components of $\m \otimes \Lambda^2 \br^9$ from \eqref{eq:spin} onto their images isomorphically, that is, it suffices to produce an element in each of these components which does not belong to $\Ker \Xi$.

We start with $\Theta_1 (P_1)$. By \eqref{eq:modules}, for any $T \in \m$, we have $\Theta_1^*(u_i \otimes T)=-S_iT$, so $u_0 \otimes S_1T+u_1 \otimes S_0T \in P_1 = \Ker(\Theta_1^*)$. Then $\Theta_1 (u_0 \otimes S_1T+u_1 \otimes S_0T)=\sum\nolimits_{i=0}^8 (u_i \wedge u_0) \otimes S_iS_1T+(u_i \wedge u_1) \otimes S_iS_0T$, so $(\Xi\Theta_1 (u_0 \otimes S_1T+u_1 \otimes S_0T))(X,Y,Z)=\sigma_{XYZ}\sum\nolimits_{i=2}^8 (\<S_iS_1T,X\>\<S_iS_0Y,Z\>+\<S_iS_0T,X\>\<S_iS_1Y,Z\>)$. From the commutator relations \eqref{eq:Rop2} it follows that the operators $S_1S_2S_3S_4$ and $S_0$ are symmetric, orthogonal and commuting. Choose $X \in \m$ to be their common eigenvector with eigenvalue $1$, and then choose $Y=S_1S_3X, \; Z=S_2S_3X$ and $T=S_2X$. Using relations \eqref{eq:Rop2} we then obtain that $(\Xi\Theta_1 (u_0 \otimes S_1T+u_1 \otimes S_0T))(X,Y,Z)=-3\|X\|^4$. It follows that the restriction of $\Xi$ to $\Theta_1 (P_1)$ is an isomorphism onto the image.

We next consider $P_2$. By \eqref{eq:modules} we have $\Theta_2^*((u_i \wedge u_j)\otimes T)=u_i \otimes S_jT-u_j \otimes S_iT$, so for any $T \in \m$, the element $N=(u_0 \wedge u_1) \otimes S_1S_0T+(u_1 \wedge u_2) \otimes S_1S_2T+(u_2 \wedge u_3) \otimes S_3S_2T+(u_3 \wedge u_0) \otimes S_3S_0T$ lies in $P_2  = \Ker(\Theta_2^*)$. We have $\Xi(N)(X,Y,Z)=\sigma_{XYZ}(\<S_1S_0T,X\>\<S_0S_1Y,Z\>+\<S_1S_2T,X\>\<S_1S_2Y,Z\>+\<S_3S_2T,X\>\<S_2S_3Y,Z\> +\<S_3S_0T,X\>\<S_3S_0Y,Z\>)$. From the commutator relations \eqref{eq:Rop2} it follows that the operator $S_0S_1S_2S_3$ is symmetric, orthogonal and has zero trace. Choose a nonzero $X \in \m$ to satisfy $\<S_0S_1S_2S_3X,X\>=0$ and then choose $T=S_0S_3X, \; Z = S_0S_3Y$ and a nonzero $Y \in \m$ such that $Y \perp X, S_0S_1S_2S_3X, S_iS_jX$, for all $i,j=0,1,2,3$. Then using the commutator relations \eqref{eq:Rop2} again we obtain that $\Xi(N)(X,Y,Z)=\|X\|^2\|Y\|^2$, hence the restriction of $\Xi$ to $P_2$ is also an isomorphism onto the image.

So $\Ker \Xi= \Theta_1 \Theta_0(P_0) \simeq \m$, as required.
\end{proof}

\begin{remark} Note that in the case when $M_0$ is a quaternionic projective space of dimension $4m < 20$ or a complex projective space, the claim of Lemma~\ref{l:Amh} is false, by the dimension count.
\end{remark}

\section{Symmetric spaces of rank two. Proof of Lemma~\ref{l:rk2}}
\label{s:rk2}

In this section, we give the proof of Lemma~\ref{l:rk2} from Section~\ref{s:irred}:

\begin{replemma}{l:rk2}
Suppose $M_0$ is a compact irreducible symmetric space of rank two other than $SU(3)/SO(3)$. In the assumptions of Proposition~\ref{p:irre}, $\Phi=0$.
\end{replemma}

\begin{proof}
Compact irreducible symmetric spaces of rank two, modulo low-dimensional isomorphisms, are: 
\begin{itemize}
  \item the compact groups $SU(3), \; Sp(2), \; G_2$;
  \item the Grassmannians
  $SO(p+2)/(SO(p) \times SO(2)), \; p \ge 3,\; SU(p+2)/S(U(p) \times U(2)), \; p \ge 3$, $Sp(p+2)/(Sp(p) \times Sp(2)), \; p \ge 2$;
  \item three classical spaces $SO(10)/U(5), \; SU(6)/Sp(3), \; SU(3)/SO(3)$;
  \item three exceptional spaces $E_6/F_4, \; E_6/(SO(10) \times SO(2)), \; G_2/SO(4)$.
\end{itemize}

For the groups, the claim follows from \cite[Proposition]{Ndga}.

Note that in general, it is sufficient to prove that $\<\Phi(Y,X),X\>=0$ for all $X,Y \in \m$, as then the map $(X,Y,Z)\mapsto\<\Phi(X,Y),Z\>$ is skew-symmetric, so is zero by \eqref{eq:cyclePhi}. Now, given an arbitrary $X \in \m$, consider a Cartan subalgebra $\ag \subset \m$ containing $X$. By linearity, it is sufficient to show that $\<\Phi(Y,X),X\>=0$, when $Y$ is either a root vector or belongs to $\ag$. So it suffices to prove that for every $\a \in \Delta$ and every $Y \in \m_\a$, we have $\<\Phi(\ag_Y,\ag_Y),\ag_Y\>=0$, where $\ag_Y=\ag \oplus \br Y$. Suppose $\m' \subset \m$ is an irreducible Lie triple system containing $\ag_Y$. Then by Remark~\ref{r:tg}, he maps $K$ and $\Phi$ on $\m$ descend to the maps $K'$ and $\Phi'$ on $\m'$, which still satisfy the assumptions of Proposition~\ref{p:irre}. As $\<\Phi'(X,Y),Z\>=\<\Phi(X,Y),Z\>$, for all $X,Y,Z \in \m'$, it is sufficient to prove the lemma for some irreducible Lie triple system $\m'$ containing $\ag_Y$.

Now, the spaces $SU(6)/Sp(3)$ and $E_6/F_4$ have the restricted root system of type $\mathrm{A}_2$ and each of them has a totally geodesic submanifold $SU(3)$ of the maximal rank \cite{Kl}. The Lie triple system $\m'$ tangent to $SU(3)$ is again of type $\mathrm{A}_2$; it contains a Cartan subalgebra $\a \subset \m$ and can be rotated by the isotropy subgroup of $\ag$ to contains a given root vector $Y$ of $\m$ (as the Weyl group is transitive on the roots of the equal length, and as all the roots of the system $\mathrm{A}_2$ have the same length). The claim now follows from the fact that $\Phi=0$ for $SU(3)$.

The spaces $SU(p+2)/S(U(p) \times U(2)), \; p \ge 3, \; Sp(p+2)/(Sp(p) \times Sp(2)), \; p \ge 3,\; E_6/(SO(10) \times SO(2))$, and $SO(10)/U(5)$,  have the restricted root system of type $\mathrm{BC}_2$. Each of them contains a totally geodesic submanifold $SU(5)/S(U(3) \times U(2))$ of the maximal rank and with the root system of type $\mathrm{BC}_2$ \cite{Kl}. By the action of the isotropy group, the Lie triple system $\m'$ tangent to $SU(5)/S(U(3) \times U(2))$ can be chosen to contain the given Cartan subalgebra $\ag \subset \m$ and then, as the root system of $\m'$ contains the roots of all lengths, can be rotated by the isotropy subgroup of $\ag$ to contains a given root vector $Y$ of $\m$. Hence to prove the lemma for all these spaces it suffices to show that $\Phi=0$ for $SU(5)/S(U(3) \times U(2))$. We can reduce the space further by noting that if the root vector $Y$ of the space $\ag_Y$ corresponds to the longest or the second longest root of $SU(5)/S(U(3) \times U(2))$, then $\ag_Y$ is contained in a Lie triple system of type $\mathrm{B}_2$ tangent to a totally geodesic $SO(6)/(SO(4) \times SO(2))= SU(4)/S(U(2) \times U(2)) \subset SU(5)/S(U(3) \times U(2))$. If the root vector $Y$ corresponds to the shortest root, then $\ag_Y$ is again contained in a Lie triple system of type $\mathrm{B}_2$ tangent to a totally geodesic $SO(5)/(SO(3) \times SO(2)) \subset SU(5)/S(U(3) \times U(2))$. Hence to prove the lemma for all these spaces it suffices to show that $\Phi=0$ for the Grassmannians $SO(p+2)/(SO(p) \times SO(2)), \; p = 3, 4$, which are included in the next case.

The spaces $Sp(4)/(Sp(2) \times Sp(2))$ and $SO(p+2)/(SO(p) \times SO(2)), \; p \ge 3$, have the restricted root system of type $\mathrm{B}_2$. Each of them contains a totally geodesic submanifold $SO(5)/(SO(3) \times SO(2))$ of the maximal rank and with the root system of type $\mathrm{B}_2$, so by the arguments similar to the above, the proof of the lemma for these spaces will follow from the proof that $\Phi=0$ for $SO(5)/(SO(3) \times SO(2))$.

Summarising, we see that it suffices to prove the lemma for the Grassmannian $SO(5)/(SO(3) \times SO(2))$ and for the exceptional space $G_2/SO(4)$. This is done below by a direct calculation. 

\medskip 

Let $M_0=SO(5)/(SO(3) \times SO(2))$. Then $\m$ can be identified with the space $M_{3,2}(\br)$ of $3 \times 2$ real matrices with the triple bracket defined by $\ad_{[X,Y]}Z=YX^tZ-XY^tZ+ZX^tY-ZY^tX$. The matrices $E_{a\a}, \; a=1,2,3,\, \a=1,2$ having $1$ in the $a$-th row of the $\a$-th column and zero elsewhere form an orthonormal basis for $\m$ (up to scaling). This basis is acted upon by the isometries from the product of the symmetric groups $S_3 \times S_2 \subset SO(3) \times SO(2)$.

Denote $F(X,Y,Z)$ the operator on the left-hand side of \eqref{eq:biadphi}.

Then from $\<F(E_{11}, E_{21}, E_{32})E_{31}, E_{22}\>=0$ we obtain $\<K_{E_{32}}E_{31},E_{12}\>=0$. Acting by $S_3 \times S_2$ we get $\<K_{E_{a\a}}E_{a\b},E_{b\a}\>=0$, for $\a \ne \b, \, a \ne b$. Next, from $\<F(E_{11}, E_{21}, E_{12})E_{21}, E_{31}\>=0$ we obtain $\<K_{E_{21}}E_{21},E_{32}\>=-\<\Phi(E_{11},E_{12}),E_{31}\>$. Acting by $S_3 \times S_2$ we get $\<K_{E_{a\a}}E_{a\a},E_{b\b}\> =-\<\Phi(E_{c\a},E_{c\b}),E_{b\a}\>$ for $\a \ne \b, \, a \ne b, \; c \ne a,b$. Then from
$\<F(E_{11}, E_{21}, E_{12})E_{32}, E_{21}\>-\<F(E_{21}, E_{31}, E_{22})E_{12}, E_{11}\>=0$ we get that $\<\Phi(E_{11},E_{12}),E_{32}\>=0$, hence $\Phi(E_{a1},E_{a2}) \in \Span(E_{a1},E_{a2})$, for all $a=1,2,3$. Substituting all the above identities to $\<F(E_{12},E_{32},E_{11})E_{12}, E_{11}\> -\<F(E_{22},E_{32},E_{11})E_{22}, E_{21}\>=0$ and to $\<F(E_{12},E_{32},E_{21})E_{12}, E_{21}\> + \<F(E_{11},E_{21},E_{32})E_{11}, E_{21}\>=0$ we obtain that $\<\Phi(E_{32},E_{11}),E_{11}\>+\<\Phi(E_{32},E_{12}),E_{12}\>=0$ and that $\<\Phi(E_{32},E_{11}),E_{11}\>-\<\Phi(E_{32},E_{12}),E_{12}\>=0$ respectively, which implies $\<\Phi(E_{b\b},E_{a\a}),E_{a\a}\>=0$, for all $\a, \b$ and for all $a \ne b$. But then by Lemma~\ref{l:trace}\eqref{it:ltrace1}, we also have that $\<\Phi(E_{a\b},E_{a\b}),E_{a\b}\>=0$, so $\<\Phi(X,E_{a\a}),E_{a\a}\>=0$, for all $\a, a$ and for all $X \in \m$. Moreover, as $\Phi(E_{a1},E_{a2}) \in \Span(E_{a1},E_{a2})$ from the above, we obtain $\Phi(E_{a1},E_{a2})=0$. Furthermore, from $\<F(E_{11},E_{21},E_{32})E_{11}, E_{31}\> - \<F(E_{12},E_{32},E_{21})E_{12}, E_{31}\>=0$ we obtain $\<\Phi(E_{21},E_{32}),E_{31}\>=0$. As $\Phi(E_{a1},E_{a2})=0$, \eqref{eq:cyclePhi} gives that also $\<\Phi(E_{21},E_{31}),E_{32}\>=0$. Acting by $S_3 \times S_2$ we get $\<\Phi(X,E_{a1}),E_{a2}\>=0$, for all $a=1,2,3$ and all $X \in \m$.

From the above we have $\<\Phi(X,E_{a\a}),E_{a\a}\>=0$, for all $\a, a$ and for all $X \in \m$. As the choice of the basis $E_{a\a}$ was arbitrary, this equation still holds with the vector $E_{a\a}$ replaced by any element from its $SO(3) \times SO(2)$ orbit, which implies $\<\Phi(X,Y),Y\>=0$, for all $X, Y \in \m$ such that $Y$ is represented by a $3 \times 2$ matrix of rank one. In particular, it follows that $\<\Phi(X,E_{a\a}),E_{b\a}\>+ \<\Phi(X,E_{b\a}),E_{a\a}\>=0$, for all $\a, a, b$, which by skew-symmetry and \eqref{eq:cyclePhi} gives $\<\Phi(E_{c\a},E_{b\a}),E_{a\a}\>=0$, for all $\a, a, b, c$.

From $\<F(E_{11}, E_{21}, E_{31})E_{11}, E_{22}\>=\<F(E_{31}, E_{21}, E_{22})E_{31}, E_{21}\>=0$ we obtain $\<K_{E_{31}}E_{11}, E_{12}\>= \<K_{E_{31}}E_{21}, E_{22}\>=\<K_{E_{31}}E_{31}, E_{32}\>$, which implies $\<K_XE_{11}, E_{12}\>= \<K_XE_{21}, E_{22}\>= \<K_XE_{31}, E_{32}\>$, for all $X \in \m$, so that all the diagonal elements of the $3 \times 3$ matrix $\<K_XE_{a1}, E_{b2}\>, \; a,b=1,2,3$, are equal. For this property still to hold under the action of the group $SO(3) \subset SO(3) \times SO(2)$, that matrix must be a linear combination of the identity matrix and a skew-symmetric matrix, so in particular, $\<K_XE_{a1}, E_{b2}\> + \<K_XE_{b1}, E_{a2}\>=0$, for all $a \ne b$ and all $X \in \m$. But then from $\<F(E_{11}, E_{21}, E_{32})E_{11}, E_{12}\>-\<F(E_{11}, E_{21}, E_{32})E_{31}, E_{32}\>=0$ we obtain $\<\Phi(E_{12},E_{21}), E_{32}\>=0$. Combining this with the above and acting by $S_3 \times S_2$ we get $\<\Phi(E_{a\a},E_{b\b}), E_{c\a}\>=0$, for all $a,b,c,\a,\b$. The fact that $\<\Phi(E_{a\a},E_{c\a}), E_{b\b}\>=0$, for $\a \ne \b$ then follows from \eqref{eq:cyclePhi}. 

\medskip 

Let $M_0=G_2/SO(4)$. Then $\m$ can be viewed as a Lie triple subsystem of $\mathfrak{so}(7)$ in the following way \cite{Miy}. For $1 \le i \ne j \le 7$, define the matrix $G_{ij} \in \mathfrak{so}(7)$ to have $1$ as its $(i,j)$-th entry, $-1$ as its $(j,i)$-th entry and zero elsewhere. For $i=1, \dots, 7$, define the subspaces $\g_i \subset \mathfrak{so}(7)$ by $\g_i=\{\eta_1 G_{i+1,i+3}+\eta_2 G_{i+2,i+6}+\eta_3 G_{i+4,i+5} \, | \, \eta_1 +\eta_2+\eta_3 =0\}$ (where we subtract $7$ from the subscripts which are greater than $7$). Then $\m=\g_1 \oplus\g_2\oplus\g_5\oplus\g_7$. Every subspace $\g_i$ is abelian; taking $\ag=\g_1$ as a Cartan subspace, we get the restricted root decomposition, with the root vectors $T_1=G_{26}+G_{45}-2G_{13}, \; T_3=G_{35}+G_{67}+2G_{14}, \;T_5=G_{47}-G_{23}-2G_{16}, \; T_4=G_{26}-G_{45}, \; T_6=G_{35}-G_{67}, \; T_2=G_{47}+G_{23}$ (we change the sign of $T_6$ compared with \cite[Eq. (11)]{Miy}). The restricted root system is of type $\mathrm{G}_2$, with $T_1,T_3,T_5$ corresponding to short roots and $T_2,T_4,T_6$, to long roots; the Lie brackets of the vectors $T_i$ are explicitly given in \cite[Table 2]{Miy}. Define $T_7=G_{24}+G_{37}-2G_{56}, \; T_8=G_{24}-G_{37} \in \ag$. With the inner product $\<X,Y\>=\Tr(XY^t)$ (which is proportional to the one induced from the Killing form) the vectors $T_i$ are orthogonal; define $e_i=T_i/\|T_i\|, \; i=1, \dots, 8$. The root vector system has a three-cyclic symmetry defined, for $a=0,1,2$, by $s_{a}e_7= \cos(2a\pi/3)e_7+\sin(2a\pi/3)e_8, \; s_{a}e_8= -\sin(2a\pi/3)e_7+\cos(2a\pi/3)e_8$, and $s_{a}T_i=T_{i+2a}, \; i=1, \dots, 6$ (where we subtract $6$ from the subscripts which are greater than $6$).

Note that the subspace $\m'=\ag\oplus\Span(T_2,T_4,T_6)$ (spanned by $\ag$ and the three long root vectors) is a Lie triple system tangent to a totally geodesic submanifold $SU(3)/SO(3) \subset G_2/SO(4)$ \cite{Kl}.

Denote $F_{ijklm}$ the equation obtained by substituting $X=e_i, \; Y=e_j,\; Z=e_k$ in \eqref{eq:biadphi}, then acting on $e_l$ and taking the inner product of the resulting vector with $e_m$. We abbreviate $K_{e_i}$ to $K_i$ and $\Phi(e_i,e_j)$ to $\Phi_{ij}$ and define an $\m$-valued quadratic form $\theta$ by $\<\theta(X),Y\>=\<\Phi(Y,X),X\>$, for $X,Y \in \m$. Note that $\<\theta(X),X\>=0$.

From $8F_{27846}-3F_{27828}-3F_{25725}+3F_{57858}$ we obtain $\<\Phi_{57},e_5\>=0$, so by $F_{57858}$ we get $\<K_8e_7, e_8\>-3\<\Phi_{78},e_8\> =0$. By the cyclic symmetry, this also holds with the vectors $e_7, e_8$ replaced by $s_{a}e_7, s_{a}e_8$, respectively, so that $\<K_Xe_7, e_8\>- 3\<\Phi_{78},X\> = 0$, for $X=s_{a}e_8= -\sin(2a\pi/3)e_7+\cos(2a\pi/3)e_8, \; a=0,1,2$. It follows that $\<K_Xe_7, e_8\>- 3\<\Phi_{78},X\> = 0$, for all $X \in \ag$. Then from $F_{27827}$ it follows that $\<\Phi_{28},e_2\>=2\<\Phi_{78},e_7\>$. Furthermore, from $2F_{47847}-2\sqrt{3}F_{47826} +2F_{67867}-2\sqrt{3}F_{67824}-F_{27827}$ we obtain $2\<\Phi_{48},e_4\> + 3\<\Phi_{78},e_7\> + 2\<\Phi_{68},e_6\>-\<\Phi_{28},e_2\> = 0$. On the other hand, considering the restriction of equation \eqref{eq:biadphi} to $\m'= \ag\oplus\Span(T_2,T_4,T_6)$ (see Remark~\ref{r:tg}) and applying Lemma~\ref{l:trace}\eqref{it:ltrace1}, with $\m'$ as $\m$, we get $\<\Phi_{48},e_4\>+\<\Phi_{78},e_7\>+\<\Phi_{68},e_6\>+\<\Phi_{28},e_2\> = 0$. It follows that $\<\Phi_{78},e_7\>-3\<\Phi_{28},e_2\> = 0$ which, combined with the equation $\<\Phi_{28},e_2\>=2\<\Phi_{78},e_7\>$ from the above gives $\<\Phi_{28},e_2\>=\<\Phi_{78},e_7\>=0$. By the cyclic symmetry, the second equation implies $\<\Phi_{78},e_8\>=0$. Then, as $\<\Phi_{57},e_5\>=0$ from the above, equation $-3F_{25725}+F_{27828}+3F_{57858}$ gives $\<\Phi_{27},e_2\>=0$. It follows that $\<\theta(e_2),X\> =0$, for all $X \in \ag$, hence by the cyclic symmetry, $\<\theta(e_i),X\> =0$, for all $X \in \ag$ and all long root vectors $e_i$. Moreover, from $F_{25825}+F_{27827}+3F_{57857}$ and $\<\Phi_{28},e_2\>=\<\Phi_{78},e_7\>=0$ we get $\<\Phi_{58},e_5\>=0$. It follows that $\<\theta(e_5),X\> =0$, for all $X \in \ag$, hence again by the cyclic symmetry, $\<\Phi(e_i),X\> =0$, for all $X \in \ag$ and all short root vectors $e_i$.

Summarising the above we get that $\<\theta(Y),X\> =0$, for all $X \in \ag$ and for every $Y$ which is either a root vector, or belongs to $\ag$; in particular,
\begin{equation}\label{eq:g2so4comm}
\<\theta(Y),X\> =0, \quad \text{for all commuting } X,Y \in \m.
\end{equation}
Now, it is easy to see that $e_7$ is a root vector for the Cartan subalgebra $\Span(e_5,e_8)$, so $\<\theta(e_7),e_5\> =0$. Moreover, as $[e_5,e_8]=0$, we have $\<\theta(e_8),e_5\>=0$, by \eqref{eq:g2so4comm}. It follows that $\<\theta(e_7)+\theta(e_8),e_5\>=0$. As the expression on the right-hand side does not depend on the choice of an orthonormal basis for $\ag$, we obtain by cyclic symmetry that $\<\theta(e_7)+\theta(e_8),e_i\>=0$, for every short root vector $e_i$ (that is, for $i=1,3,5$). Similarly, as $e_8$ is a root vector for the Cartan subalgebra $\Span(e_2,e_7)$ and as $[e_2,e_7]=0$, we get $\<\theta(e_7)+\theta(e_8),e_2\>=0$, so by cyclic symmetry, $\<\theta(e_7)+\theta(e_8),e_i\>=0$, for $i=2,4,6$. As $e_7, e_8$ commute, we have $\<\theta(e_8),e_7\>=\<\theta(e_7),e_8\> =0$ by \eqref{eq:g2so4comm}, so $\theta(e_7)+\theta(e_8)=0$. It follows that
\begin{equation}\label{eq:g2so4commt}
\theta(X)+\theta(Y)=0,  \quad \text{for all commuting orthonormal vectors } X,Y \in \m.
\end{equation}
From $\theta(e_7)+\theta(e_8)=0$ it now follows that $\sum_{a=0}^2 \theta(s_a(e_7))=\sum_{a=0}^2 \theta(s_a(e_8))=0$, where, as above, $s_{a}e_7= \cos(2a\pi/3)e_7+\sin(2a\pi/3)e_8, \; s_{a}e_8= -\sin(2a\pi/3)e_7+\cos(2a\pi/3)e_8$, for $a=0,1,2$. By \eqref{eq:g2so4commt} we have $\theta(e_7)+ \theta(e_2)=0$, hence $\theta(s_a(e_7))+\theta(s_a(e_2))=0$, by cyclic symmetry. As $s_1(e_2)=e_4$ and $s_2(e_2)=e_6$ we obtain
\begin{equation}\label{eq:g2so4long}
\theta(e_2)+\theta(e_4)+\theta(e_6)=0.
\end{equation}
Now equation $\sqrt{3}F_{12616}-2F_{25626}$ gives $-\sqrt{3}\<\Phi_{12},e_1\>+\sqrt{3}\<\Phi_{26},e_6\>+2\<\Phi_{25},e_2\>-2\<\Phi_{56}, e_6\> = 0$.
But $\<\Phi_{25},e_2\>=0$ by \eqref{eq:g2so4comm}, as $[e_2,e_5]=0$, and $\<\Phi_{12},e_1\>=-\<\Phi_{42},e_4\>=\<\Phi_{62},e_6\>$ (by \eqref{eq:g2so4comm} and by \eqref{eq:g2so4long}). It follows that $\<\theta(e_6),\sqrt{3}e_2-e_5\> = 0$. On the other hand, equation $\sqrt{3}F_{23434}+2F_{24524}$ gives $-\sqrt{3}\<\Phi_{23},e_3\>-\sqrt{3}\<\Phi_{24},e_4\>+2\<\Phi_{25},e_2\>+2\<\Phi_{45}, e_4\> = 0$.
Again $\<\Phi_{25},e_2\>=0$, and $\<\Phi_{23},e_3\>=-\<\Phi_{26},e_6\>=\<\Phi_{24},e_4\>$ (by \eqref{eq:g2so4comm} and by \eqref{eq:g2so4long}). It follows that $\<\theta(e_4),\sqrt{3}e_2+e_5\> = 0$, which then implies $\<\theta(e_6),\sqrt{3}e_2+e_5\> = 0$ (by \eqref{eq:g2so4long} and \eqref{eq:g2so4comm}). Thus $\<\theta(e_6),e_2\> = \<\theta(e_6),e_5\> = 0$. From the first equation and \eqref{eq:g2so4long} we get $\<\theta(e_6),e_4\> = 0$, so by cyclic symmetry, $\<\theta(e_i),e_j\> = 0$ for all $i,j=2,4,6$. Similarly, the second equation implies $\<\theta(e_i),e_5\> = 0$, for all $i=2,4,6$ by \eqref{eq:g2so4long} and by \eqref{eq:g2so4comm}. Then by cyclic symmetry  $\<\theta(e_i),e_j\> = 0$ for all $i=2,4,6, \; j=1,3,5$, hence $\theta(e_i) \in \ag$, for all $i=2,4,6$. As $[e_2, e_7]=0$, we get from \eqref{eq:g2so4commt} that $\theta(e_7) \in \ag$, which implies $\theta(e_7)=0$ by\eqref{eq:g2so4comm}, so $\theta(s_a(e_7)) = 0$ for $a=0,1,2$ by cyclic symmetry. But then, the restriction of the quadratic form $\theta$ to the two-dimensional space $\ag$ vanishes on three lines in $\ag$, hence $\theta(X)=0$, for all $X \in \ag$.

As any $X \in \m$ belongs to a Cartan subspace it follows that $\theta=0$, that is, $\<\Phi(X,Y),Y\>=0$, for all $X,Y \in \m$. Then the trilinear form $(X,Y,Z) \mapsto \<\Phi(X,Y),Z\>$ is skew-symmetric by the first two arguments and by the second two arguments, hence it is skew-symmetric by all three, which implies $\Phi=0$ by \eqref{eq:cyclePhi}.
\end{proof}

\section{Symmetric spaces of rank one}
\label{s:rk1}

In this section we prove Proposition~\ref{p:irre} for the complex and the quaternionic projective spaces, and also the fact that $\Phi=0$ for the Cayley projective plane (the fact that $K=0$ for $M_0=\Oc P^2$ then follows from Lemma~\ref{l:trace}\eqref{it:ltrace3}). Note that the proof of a statement equivalent to Proposition~\ref{p:irre} for rank one compact symmetric spaces is contained ``in disguise" in \cite{Nampa,Nco} under more general assumptions; for the complex projective space, see \cite{BG1}. For completeness, we give a direct proof here.

\medskip
\underline{$M_0=\bc P^m$.} Denote $J$ the complex structure. Note that $\ad(\h)$ is the centraliser of $J$ in $\so(\m)$, so equation \eqref{eq:Kperpadh} is equivalent to $K_ZJ+JK_Z=0$, for all $Z \in \m$. We have (up to a constant factor) $\ad_{[X,Y]}= X\wedge Y + 2 \<JX,Y\>J+(JX)\wedge(JY)$. Substituting this into \eqref{eq:biadphi} we obtain
\begin{equation}\label{eq:CPm}
    \sigma_{XYZ}(2\<JX,Y\>T_Z+2\<T_ZX,Y\>J+(T_XY-T_YX) \wedge (JZ)+ \Phi(X,Y)\wedge Z)=0,
\end{equation}
where the skew-symmetric operators $T_Z$ are defined by $T_Z=[J,K_Z]$. Note that $T_ZJ+JT_Z=0$, and moreover, that $K=0$ if and only if $T=0$ (as $K_Z=-\frac12 JT_Z$ by \eqref{eq:Kperpadh}).

Consider two cases.

\smallskip
\underline{$m \geqslant 3$.} We first reduce the proof to the case $m=3$. Indeed, let $m > 3$. For a generic triple of vectors $X,Y,Z \in \m$, the subspace $\m' = \Span(X,Y,Z,JX,JY,JZ) \subset \m$ is a Lie triple system tangent to a totally geodesic $\bc P^3 \subset \bc P^m$. Moreover,
if $K$ satisfies condition \eqref{eq:Kperpadh} (so that $K_ZJ+JK_Z=0$), then $K'$ (in the notation of Remark~\ref{r:tg}) also satisfies condition
\eqref{eq:Kperpadh} on $\m'$, as $\m'$ is $J$-invariant. Then, assuming the claim of Proposition~\ref{p:irre} to be true for $M_0=\bc P^3$, we obtain that $\<\Phi(X,Y),Z\> =\<K_XY,Z\>=0$ by Remark~\ref{r:tg}. So we can assume that $m=3$.

For a nonzero $V \in \m$, take $X,Y,Z \perp V,JV$ in \eqref{eq:CPm}, act by the left-hand side on $V$ and take the inner product with $JV$. We obtain $\sigma_{XYZ}\<T_ZX,Y\>=0$ for such $X,Y,Z$. In particular, taking $Y=JX$ and an arbitrary $Z \in \m$ we get $T_XJX=T_{JX}X$. Polarising this equation we obtain
\begin{equation}\label{eq:CPmTXJX}
    T_XY-T_YX=T_{JY}JX-T_{JX}JY, \qquad T_XJX=T_{JX}X,
\end{equation}
for all $X, Y \in \m$.

Taking $Z=e_i$ in \eqref{eq:CPm}, acting by the left-hand side on $e_i$ and summing up by $i$, where $\{e_i\}$ is an orthonormal basis for $\m$ we get (using Lemma~\ref{l:trace}\eqref{it:ltrace1}, the fact that $JT_X+T_XJ=0$ and that $\sum_i T_{e_i}e_i=0$, which follows from \eqref{eq:CPmTXJX}) $3\Phi(X,Y)=T_{JX}Y-T_{JY}X+2\sum_i\<T_{e_i}X,Y\>Je_i$, so
\begin{equation}\label{eq:CPmPhiT}
    3\<\Phi(X,Y),Z\>=\<T_{JX}Y,Z\>-\<T_{JY}X,Z\>-2\<T_{JZ}X,Y\>.
\end{equation}
It follows from (\ref{eq:CPmTXJX}, \ref{eq:CPmPhiT}) that $\Phi(X,JX)=0$ and that $\sigma_{XYZ}\<\Phi(X,Y),JZ\>=0$. Taking the inner product of \eqref{eq:CPm} with $J$ we get $\sigma_{XYZ}\<T_ZX,Y\>=0$, for all $X,Y,Z \in \m$. Then from (\ref{eq:CPmTXJX}, \ref{eq:CPmPhiT}) we obtain $\Phi(X,Y)=J(T_YX-T_XY)$. But then the sum of the last two terms on the left-hand side of \eqref{eq:CPm} commutes with $J$, while the first term anticommutes with $J$ (and the second term vanishes, as $\sigma_{XYZ}\<T_ZX,Y\>=0$). It follows that $\sigma_{XYZ}(\<JX,Y\>T_Z)=0$, which implies $T=0$. It follows that $\Phi=0$ and $K=0$, as required.

\smallskip

\underline{$m=2$.} Then $M_0=\bc P^2$ and we can additionally assume that $\Phi=0$. Taking the inner product of \eqref{eq:CPm} with $J$ and using the fact that $T_ZJ+JT_Z=0$ we obtain that $\sigma_{XYZ}(\<T_ZX,Y\>)=0$. The subspace of those $T \in \so(4)$ which satisfy $TJ+JT=0$ is spanned by two elements $J_2, J_3$ which can be chosen to satisfy $J_2^2=J_3^2=-\id, \; JJ_2=J_3$ (so that $\Span(J,J_2,J_3)$ is one of the factors of $\so(4)=\so(3) \oplus \so(3)$). It follows that $T_Z=\<a,Z\>J_2+\<b,Z\>J_3$ for some $a,b \in \m$. Then the equation $\sigma_{XYZ}(\<T_ZX,Y\>)=0$ implies $\<a,Z\>J_2Z+\<b,Z\>J_3Z+(J_2Z)\wedge a + (J_3Z) \wedge b = 0$. Taking the inner product with $J$ we obtain that $b=-Ja$, so $T_Z=\<a,Z\>J_2Z-\<Ja,Z\>J_3Z$. From \eqref{eq:CPm} we get $\sigma_{XYZ}(2\<JX,Y\>T_Z+(T_XY-T_YX) \wedge (JZ))=0$. Take $Z=a, \; X \perp a, Ja$ and $Y=JX$. Then $T_X=T_Y=0$ and $T_Z=\|a\|^2J_2$ and we obtain $\|a\|^2(2\|X\|^2J_2+(J_3X)\wedge(JX)-(J_2X)\wedge X)=0$. Acting on $X$ we get $3\|a\|^2\|X\|^2J_2X=0$, so $a=0$. It follows that $T_Z=0$ and hence $K=0$, as required.

\medskip
\underline{$M_0=\mathbb{H}P^d, \; d \geqslant 2$.}
Let $(J_1, J_2, J_3=J_1J_2)$ be the quaternionic structure. Define the orthogonal projections $\pi_d: \so(\m) \to \mathfrak{sp}(d)$ and $\pi_1: \so(\m) \to \mathfrak{sp}(1)=\Span(J_1,J_2,J_3)$ by $\pi_d L = \frac14(L-\sum_{i=1}^3 J_iLJ_i)$ and $\pi_1 L = \frac{1}{n}\sum_{i=1}^3 \<J_i,L\>J_i$, where $n=4d$. Clearly $\pi_d\pi_1=\pi_1\pi_d=0$. For $X,Y \in \m$, we have (up to a constant factor)
\begin{equation*}
\ad_{[X,Y]}= X\wedge Y + \sum\nolimits_{i=1}^3(2 \<J_iX,Y\>J_i+(J_iX)\wedge(J_iY))=(n\pi_1+4\pi_d)(X \wedge Y).
\end{equation*}
Substituting this into \eqref{eq:biadphi} we obtain
\begin{equation}\label{eq:HPm} 
    \sigma_{XYZ}([n\pi_1+4\pi_d,\ad_{K_Z}] (X \wedge Y)+\Phi(X,Y)\wedge Z)=0.
\end{equation}
By condition \eqref{eq:Kperpadh}, for all $X \in \m$, $K_X$ belongs to the $(\mathfrak{sp}(1)\oplus\mathfrak{sp}(d))$-module $(\mathfrak{sp}(1)\oplus\mathfrak{sp}(d))^\perp \subset \so(\m)$, which gives $\pi_1K_X=\pi_dK_X = 0$, that is, $K_X\perp J_i$ and $K_X=\sum_{i=1}^3 J_iK_XJ_i$. Moreover, $\pi_1\ad_{K_X}\pi_1=\pi_d\ad_{K_X}\pi_1=\pi_1\ad_{K_X}\pi_d=\pi_d\ad_{K_X}\pi_d=0$.

Therefore projecting \eqref{eq:HPm} to $\mathfrak{sp}(d)$ and to $\mathfrak{sp}(1)$ we obtain $\pi_d\sigma_{XYZ}(4\ad_{K_Z} (X \wedge Y)+ \Phi(X,Y)\wedge Z)=0$ and $\sigma_{XYZ}\pi_1(n\ad_{K_Z} (X \wedge Y)+\Phi(X,Y)\wedge Z)=0$, respectively, which gives
\begin{gather}\label{eq:hpmidmA}
    \pi_d \sigma_{XYZ} (T(X,Y)\wedge Z)=0, \quad \text{where } T(X,Y)=4(K_XY-K_YX)+\Phi(X,Y),\\
    \sigma_{XYZ} \<J_iZ,n(K_XY-K_YX)+\Phi(X,Y)\>=0. \label{eq:hpmpi}
\end{gather}

The above equations still hold in $\m^\bc$, the complexification of $\m$, if we extend all the maps and the inner product by complex linearity. 
We have $\m^\bc=E_{\i} \oplus E_{-\i}$, where $E_{\pm \i}$ are the ($\pm \i$)-eigenspaces of $J_1$. The subspaces $E_{\pm \i}$ are of dimension $2d$ and are isotropic relative to the inner product. Moreover, the operators $J_2, J_3$ interchange the subspaces $E_{\pm \i}$, and for any $X \in E_{\ve \i}, \; \ve = \pm 1$, we have $J_3X=-\ve \i J_2X$. Substituting $X_j \in E_{\ve_j \i}, \; j=1,2,3, \; \ve_j = \pm 1$, as $X, Y, Z$ into \eqref{eq:hpmidmA} we obtain
\begin{equation}\label{eq:hpmidmAc}
    \sigma_{123} (((\id+ \ve_3 \i J_1)T(X_1,X_2))\wedge X_3+(J_2(\id+ \ve_3 \i J_1)T(X_1,X_2))\wedge (J_2X_3))=0.
\end{equation}
Note that $(\id+ \ve \i J_1)Y$ is twice the $E_{-\ve \i}$-component of $Y \in \m^\bc$. First consider the case when $\ve_1=\ve_2=\ve_3=\ve$. Acting by the left-hand side of \eqref{eq:hpmidmAc} on $Y \in E_{\ve \i}$ such that $\<Y,J_2X_j\>=0, j =1,2,3$ (such a nonzero $Y$ exists, as $\dim E_{\ve \i}=2d \ge 4$), we obtain that $\<T(X_1,X_2),Y\>=0$, hence the $E_{-\ve \i}$-component of $T(X_1,X_2)$ lies in $\Span(J_2X_1,J_2X_2,J_2X_3)$, for any linearly independent $X_1,X_2,X_3 \in E_{\ve \i}$, therefore it lies in $J_2 \Span(X_1,X_2)$. As $\dim E_{\ve \i}=2d \ge 4$, it follows that the $E_{-\ve \i}$-component of $T(X_1,X_2)$ equals $J_2 (X_1 \wedge X_2) p_{-\ve}$, for all $X_1,X_2 \in E_{\ve \i}$, for some $p_{-\ve} \in E_{-\ve \i}$ by Lemma~\ref{l:cross}\eqref{it:cr1}. Now suppose that $\ve_1=\ve_2=-\ve_3=\ve$ in \eqref{eq:hpmidmAc}. Acting by the left-hand side of \eqref{eq:hpmidmAc} on $Y \in E_{\ve \i}$ such that$\<Y,J_2X_1\>=\<Y,J_2X_2\>=\<Y,X_3\>=0$, we obtain $\<T(X_1,X_2),J_2Y\>=\<T(X_1,X_3),Y\>=0$, for any $X_1,X_2 \in E_{\ve \i}, \; X_3 \in E_{-\ve \i}$ such that $X_1,X_2,J_2X_3$ are linearly independent. From the first equation it follows that the $E_{\ve \i}$-component of $T(X_1,X_2)$ lies in $\Span(X_1,X_2)$, hence it equals $(X_1 \wedge X_2) q_{-\ve}$, for all $X_1,X_2 \in E_{\ve \i}$, where $q_{-\ve} \in E_{-\ve \i}$. From the second equation it follows that the $E_{-\ve \i}$-component of $T(X_1,X_3)$ lies in $\Span(J_2X_1,X_3)$, so it equals $\<X_3,a_\ve\>J_2X_1+\<J_2X_1,b_\ve\> X_3$, for all $X_1 \in E_{\ve \i}, \; X_3 \in E_{-\ve \i}$, where $a_\ve, b_\ve \in E_{-\ve \i}$. Combining these we find that there exist $p_j \in \m^\bc, \; j=0,1,2,3$, such that $T(X,Y)=(X \wedge Y)p_0 + \sum_{j=1}^3 J_j(X \wedge Y)p_j$.

As $T(X,Y)$ is real when $X$ and $Y$ are real, we obtain that $p_j \in \m$. Substituting into \eqref{eq:hpmidmA}, taking the inner product of the resulting equation with $J_1$ and choosing $X,Y \perp p_0,p_2,p_3$ we get $\<(\<Z,p_0\>J_1+\<Z,p_2\>J_3-\<Z,p_3\>J_2)X, Y\>=0$. But the operator in the brackets is either zero or nonsingular, and in the latter case its maximal isotropic subspace has dimension $n/2 <n-3$. It follows that $\<Z,p_0\>J_1+\<Z,p_2\>J_3-\<Z,p_3\>J_2=0$, so $p_0=p_2=p_3=0$. Similar argument with $J_1$ replaced by $J_2$ shows that also $p_1=0$. Hence $T=0$, so $4(K_XY-K_YX)=-\Phi(X,Y)$, which by \eqref{eq:cyclePhi} implies
\begin{equation}\label{eq:hpmkphi}
    4\<K_XY,Z\>=\<\Phi(Y,Z),X\>.
\end{equation}
Let $\m' \subset \m$ be a four-dimensional $\mathfrak{sp}(1)$-invariant subspace. We have $J_i\m'\subset \m'$, and so $\m'$ is a Lie triple system tangent to a totally geodesic sphere. Restricting $K$ and $\Phi$ to $\m'$ as in Remark~\ref{r:tg} we obtain from \eqref{eq:biadphi} that the projection of $\sigma_{XYZ}(\Phi(X,Y)\wedge Z)$ to $\so(\m')$ is zero, for all $X,Y,Z \in \m'$. By Lemma~\ref{l:cross}\eqref{it:cr2}, this implies that $\<\Phi(X,Y), Z\>=0$, for all $X,Y,Z \in \m'$. It now follows from \eqref{eq:hpmkphi} that $\<K_XY,Z\>=0$, for all $X,Y,Z \in \m'$. Now suppose $\m_1', \m_2' \subset \m$ are four-dimensional, orthogonal, $\mathfrak{sp}(1)$-invariant subspaces. Then $\ad_{[U,V]}\m_1' \subset \m_1'$, for $U, V \in \m_2'$, so for $X,Y,Z \in \m_1'$ we get $\<(n\pi_1+4\pi_d)(\ad_{K_Z}) (X \wedge Y), U \wedge V\>=\<(K_Z X) \wedge Y-(K_Z Y) \wedge X, (n\pi_1+4\pi_d)(U \wedge V)\>=\<(K_Z X) \wedge Y-(K_Z Y) \wedge X, \ad_{[U,V]}\>=-2\<\ad_{[U,V]}Y, K_Z X \> +2\<\ad_{[U,V]}X, K_Z Y\>=0$, as $\ad_{[U,V]}\m_1' \subset \m_1'$ and $K_Z \m_1' \perp \m_1'$ since $Z \in \m_1'$. Then taking the inner product of equation~\eqref{eq:HPm} with $U \wedge V$, where $X,Y,Z \in \m_1', \; U, V \in \m_2'$, and using \eqref{eq:hpmkphi} we obtain $\sigma_{XYZ} \sum_{i=1}^3 \<J_iX,Y\>\<\Phi(V, J_iU) - \Phi(U, J_iV), Z\>=0$. Now taking $X=J_1p, \, Y=J_2p, \, Z=J_3p$, for some $p \in \m_1'$ we get $\sum_{i=1}^3 \<\Phi(V, J_iU) - \Phi(U, J_iV), J_ip\>=0$. This is true for any $p \perp \m_2'$, but also for $p \in \m_2'$, as $\<\Phi(\m_2',\m_2'),\m_2'\>=0$. It follows that $\sum_{i=1}^3 J_i(\Phi(V, J_iU) - \Phi(U, J_iV))=0$, for all $U,V \in \m_2'$, so taking $U=J_1V$ we get $J_2(-\Phi(V,J_3V)+\Phi(J_2V,J_1V))+J_3(\Phi(V,J_2V)+\Phi(J_3V,J_1V))=0$. On the other hand, from \eqref{eq:hpmkphi} and the fact that for all $Z \in \m$, $\pi_d K_Z = 0$ we obtain that $\Phi(X,Y)+\sum_{i=1}^3 \Phi(J_iX,J_iY)=0$, for all $X, Y \in \m$; substituting $Y=J_2X$ and $Y=J_3X$ we get $\Phi(X,J_2X)+\Phi(J_1X,J_3X)=\Phi(X,J_3X)+\Phi(J_2X,J_1X)=0$. Then the above equation gives $-J_2\Phi(V,J_3V)+J_3\Phi(V,J_2V)=0$ which implies $J_2\Phi(V,J_2V)+J_3\Phi(V,J_3V)=0$, for all $V \in \m$. As a similar equation is satisfied for any two of three subscripts $1,2,3$ we obtain that $\Phi(V,J_iV)=0$, for all $V \in \m$ and all $i=1,2,3$.

Now from \eqref{eq:hpmpi}, \eqref{eq:cyclePhi} and \eqref{eq:hpmkphi} we obtain $\sigma_{XYZ} \<J_iZ,\Phi(X,Y)\>=0$. Substituting $Y=J_iX$ and using the fact that $\Phi(X,J_iX)=0$ we obtain $\<X,\Phi(X,Z)\>=-\<J_iX,\Phi(J_iX,Z)\>$, for all $X, Z \in \m$ and all $i=1,2,3$. But then $\<X,\Phi(X,Z)\>=-\<J_1X,\Phi(J_1X,Z)\>=\<J_2J_1X,\Phi(J_2J_1X,Z)\>=\<J_3X,\Phi(J_3X,Z)\>=-\<X,\Phi(X,Z)\>$. It follows that $\<\Phi(X,Z),X\>=0$, for all $X, Z \in \m$, so the map $(X,Y,Z) \mapsto \<\Phi(X,Y),Z\>$ is skew-symmetric and hence $\Phi=0$ by \eqref{eq:cyclePhi}. Then by \eqref{eq:hpmkphi}, $K=0$.

\medskip
\underline{$M_0=\Oc P^2$.} For any nonzero vector $X \in \m$, the stabiliser of $X$ in $H=\mathrm{Spin}(9)$ is $\mathrm{Spin}(7)$; it acts transitively on the unit spheres in the root spaces (that is, in the eigenspaces of the Jacobi operator $(R_0)_X$) \cite[Corollary~2.26a]{Nag}. It follows that for any nonzero root vector $Y$, both $X$ and $Y$ belong to a Lie triple system $\m' \in \m$ tangent to a totally geodesic $\mathbb{H}P^2  \subset \Oc P^2$. By Remark~\ref{r:tg} and from the above proof we get $\<\Phi(X,Y),X\>=0$, which by linearity implies $\<\Phi(X,Z),X\>=0$, for any $Z \in \m$. It follows that the map $(X,Y,Z) \mapsto \<\Phi(X,Y),Z\>$ is skew-symmetric and so $\Phi=0$ by \eqref{eq:cyclePhi}.

\section{Spaces $SU(3)/SO(3)$ and $SL(3)/SO(3)$}
\label{s:su3so3}

In the case when $M_0=SU(3)/SO(3)$ or $SL(3)/SO(3)$, the claim of Proposition~\ref{p:main} (and Proposition~\ref{p:irre}) is false, as we show below. Note however, that if a symmetric space $M_0$ is reducible and contains $SU(3)/SO(3)$ or $SL(3)/SO(3)$ as one of the factors (and has no factors of constant curvature), then the claims of both the Theorem and Proposition~\ref{p:main} still hold.

Let $M_0=SL(3)/SO(3)$ (the dual case is similar). Then $\m$ is the space of symmetric traceless $3 \times 3$ matrices and $\h=\so(3)$. We have the following $\h$-module irreducible orthogonal decomposition: $\Lambda^2 \m = P_7 \oplus \ad(\h)$. By \eqref{eq:Kperpadh} $K_X \in P_7$, for all $X \in \m$. It follows that $K \in \m \otimes P_7$. The latter is a $35$-dimensional $\so(3)$-module whose irreducible decomposition is well-known \cite{Fr2, BN}: $\m \otimes P_7 \simeq P_3 \oplus P_5 \oplus P_7 \oplus P_9 \oplus P_{11}$, where $P_{2l+1}$ is the unique irreducible $\so(3)$-module of dimension $2l+1$. The solution space $P$ of the pairs $(K, \Phi)$ which satisfy equations (\ref{eq:cyclePhi}, \ref{eq:biadphi}, \ref{eq:Kperpadh}) is an $\so(3)$-module. As by equation~\eqref{eq:biadphi}, $\Phi$ is uniquely determined by $K$ and satisfies \eqref{eq:cyclePhi} (Remark~\ref{rem:KPhi}), the module $P$ is isomorphic to a certain submodule of $\m \otimes P_7$. A computer assisted calculation shows that $\dim P = 14$. The module $P$ contains a three-dimensional submodule defined as follows: for $L \in \so(3)$, set
\begin{equation*}
\<K_XY,Z\>=\Tr((\tfrac15 X[Z,Y]-ZXY)L), \qquad \<\Phi(Y,Z),X\>=\Tr((X[Z,Y]+2ZXY)L).
\end{equation*}
(the fact that (\ref{eq:cyclePhi}, \ref{eq:biadphi}, \ref{eq:Kperpadh}) are satisfied can be checked directly). From the above decomposition it follows that the complementary submodule is an irreducible $\so(3)$-module isomorphic to $P_{11}$. Taking the real part of the highest weight vector in the complexification of $\m \otimes P_7$ we obtain, relative to the orthonormal basis $e_1=(E_{12}+E_{21})/\sqrt{2}, \, e_2=(E_{13}+E_{31})/\sqrt{2}, \, e_3=(E_{23}+E_{32})/\sqrt{2}, \,e_4=(E_{11}-E_{22})/\sqrt{2}, \, e_5=(E_{11}+E_{22}-2E_{33})/\sqrt{6}$ for $\m$, that this module is defined by the element
\begin{equation*}
    K_{e_1}=\left(
              \begin{array}{ccccc}
                0 & 0 & -1 & 0 & 0 \\
                0 & 0 & 0 & -1 & 0 \\
                1 & 0 & 0 & 0 & 0 \\
                0 & 1 & 0 & 0 & 0 \\
                0 & 0 & 0 & 0 & 0 \\
              \end{array}
            \right), \;
    K_{e_4}=\left(
              \begin{array}{ccccc}
                0 & 1 & 0 & 0 & 0 \\
                -1 & 0 & 0 & 0 & 0 \\
                0 & 0 & 0 & -1 & 0 \\
                0 & 0 & 1 & 0 & 0 \\
                0 & 0 & 0 & 0 & 0 \\
              \end{array}
            \right), \; K_{e_2}=K_{e_3}=K_{e_5}=0, \\
\end{equation*}
and then $\<\Phi(Y,Z),X\>=\frac32 \<K_XY,Z\>$, for all $X,Y,Z \in \m$ (and again, the fact that (\ref{eq:cyclePhi}, \ref{eq:biadphi}, \ref{eq:Kperpadh}) are satisfied can be checked directly).

Based on the fact that the dimension of the solution space is large, one may suggest that there indeed exists a five-dimensional Riemannian space having the same Weyl tensor as the symmetric space $SL(3)/SO(3)$ (or $SU(3)/SO(3)$), but not conformally equivalent to it. Note that such a space, if it exists, must not be Einstein (by Lemma~\ref{l:trace}\eqref{it:ltrace3}) and must have a constant scalar curvature (by Lemma~\ref{l:trace}\eqref{it:ltrace1} and \eqref{eq:gdefPhi}).

\end{document}